\def \Cay {\mathrm{Cay}}
\def \cC {\mathcal C}
\def \cF {{\mathcal F}}
\def \cS {{\mathcal S}}
\def \cT {{\mathcal T}}
\def \Z  {\mathbb{Z}}
\def \G  {\Gamma}
\def \ov {\overline}
\newtheorem{defi}{Definition}[section]
\newtheorem{prop}[defi]{Proposition}
\newtheorem{lem}[defi]{Lemma}
\newtheorem{cor}[defi]{Corollary}
\newtheorem{thm}[defi]{Theorem}
\newtheorem{rem}[defi]{Remark}
\newtheorem{conj}{Conjecture}
\theoremstyle{definition}
\newtheorem{ex}[defi]{Example}
\newcommand{\rnote}[1]{{\color{red}{\sf #1}}}
\newcommand{\bnote}[1]{{\color{blue}{\sf #1}}}
\newcommand{\comment}[1]{}
\begin{document}
\title{A reduction of the spectrum problem for odd sun systems and the prime case}

\author{Marco Buratti}
\address{Dipartimento di Matematica e Informatica, Universit\`a di Perugia, Via Vanvitelli 1, I-06123 Perugia, Italy}
\email{buratti@dmi.unipg.it}

\author{Anita Pasotti}
\address{DICATAM - Sez. Matematica, Universit\`a degli Studi di Brescia, Via
Branze 43, I-25123 Brescia, Italy}
\email{anita.pasotti@unibs.it}

\author{Tommaso Traetta}
\address{DICATAM - Sez. Matematica, Universit\`a degli Studi di Brescia, Via
Branze 43, I-25123 Brescia, Italy}
\email{tommaso.traetta@unibs.it}

\subjclass[2010]{05B30, 05C51}
\keywords{Graph decompositions, Cycle systems, Sun systems, Crown graph, Partial mixed differences.}
\setcounter{MaxMatrixCols}{18}

\begin{abstract}
  A $k$-cycle with a pendant edge attached to each vertex is called a $k$-sun.
  The existence problem for $k$-sun decompositions of $K_v$, with $k$ odd, has been solved only when
  $k=3$ or $5$.


   By adapting a method used by Hoffmann, Lindner and Rodger to reduce the spectrum problem for odd cycle systems of the complete graph,
  we show that if there is a $k$-sun system of $K_v$ ($k$ odd) whenever $v$ lies in the range $2k< v < 6k$ and
  satisfies the obvious necessary conditions, then such a system exists for every admissible $v\geq 6k$.


  Furthermore, we give a complete solution whenever $k$ is an odd prime.
\end{abstract}
\maketitle
\section{Introduction}
We denote by $V(\G)$ and $E(\G)$ the set of vertices and the list of edges of a graph $\G$, respectively.
Also, we denote by $\G+w$ the graph obtained by adding to $\G$ an independent set
$W=\{\infty_i\mid 1\leq{i}\leq{w}\}$  of $w\geq 0$
vertices each adjacent to every vertex of $\G$, namely,
\[
\G+w := \G\ \cup\ K_{V(\G), W},
\]
where
$K_{V(\G), W}$ is the complete bipartite graph with parts $V(\G)$ and $W$. Denoting by $K_v$ the
\emph{complete graph} of order $v$, it is clear that $K_{v} + 1$ is isomorphic to $K_{v+1}$.

We denote by $x_1 \sim x_2 \sim \ldots \sim x_k$ the \emph{path} with edges $\{x_{i-1}, x_{i}\}$
for $2\leq{i}\leq{k}$. By adding the edge $\{x_1, x_k\}$ when $k\geq 3$, we obtain a \emph{cycle of length $k$}
(briefly, a \emph{$k$-cycle})
denoted by $(x_1, x_2, \ldots, x_k)$. A $k$-cycle with further $v-k\geq0$ isolated vertices will be referred to as
a $k$-cycle of order $v$. By adding to  $(x_1, x_2, \ldots, x_k)$ an independent set of edges
$\big\{\{x_i, x'_i\}\mid 1\leq{i}\leq{k}\big\}$, we obtain the \emph{$k$-sun} on $2k$ vertices
(sometimes referred to as \emph{$k$-crown} graph) denoted by
\begin{equation}\nonumber
  \left(
  \begin{array}{ccccc}
    x_1  & x_2      & \ldots & x_{k-1}   & x_{k} \\
    x'_1 & x'_2     & \ldots & x'_{k-1}  & x'_{k}
  \end{array}
  \right),
\end{equation}
whose edge-set is therefore $\big\{\{x_i, x_{i+1}\}, \{x_i, x'_i\}\mid 1\leq{i}\leq{k}\big\}$,
where $x_{k+1}=x_1$.

A \emph{decomposition} of a graph $K$ is a set $\{\G_1, \G_2, \ldots, \G_t\}$ of subgraphs of $K$
whose edge-sets between them partition the edge-set of $K$; in this case, we briefly write
$K=\oplus_{i=1}^t \G_{i}$. If each $\G_i$ is isomorphic to $\G$, we speak of a
\emph{$\G$-decomposition of $K$}. If $\G$ is a $k$-cycle (resp., $k$-sun), we also speak of a
\emph{$k$-cycle system} (resp., \emph{$k$-sun system}) of $K$.

In this paper we study the existence problem for $k$-sun systems of $K_v$ ($v>1$).
Clearly, for such a system to exist we must have
\begin{equation}\label{nec}\tag{$\ast$}
  \text{$v\geq 2k$\;\;\; and\;\;\; $v(v-1)\equiv 0 \pmod{4k}$}.
\end{equation}
As far as we know, this problem has been completely settled only
when $k=3,5$ \cite{FHL, FJLS}, $k=4,6,8$ \cite{LG}, and when $k=10, 14$ or $2^t\geq 4$ \cite{FJLSdm}.
It is important to notice that, as a consequence of a general result proved in \cite{W},
condition \eqref{nec} is sufficient whenever $v$ is large enough with respect to $k$.
These results seem to suggest the following.
\begin{conj}\label{conj}
Let $k\geq 3$ and $v> 1$.
There exists a $k$-sun system of $K_v$ if and only if \eqref{nec} holds.
\end{conj}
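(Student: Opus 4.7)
The strategy splits into two layers: a recursive reduction that handles all large $v$, and a direct construction phase for admissible $v$ with $2k<v<6k$.

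\emph{Reduction step.} Given admissible $v\geq 6k$, I would write $v=u+w$ with $u$ admissible in the base window $2k<u<6k$ and $w$ congruent to a convenient residue. Split $K_v = K_U\oplus K_W\oplus K_{U,W}$ where $|U|=u$, $|W|=w$. The inductive hypothesis yields a $k$-sun system of $K_U$ and, when $w$ itself is admissible, one of $K_W$; otherwise one absorbs a handful of vertices of $W$ into the $U$-side so that the residual subgraph on $W$ remains tractable. The crucial ingredient, mirroring the Hoffmann--Lindner--Rodger trick for odd cycle systems, is a $k$-sun decomposition of a bipartite-plus-correction graph built out of $K_{U,W}$ (possibly augmented by a few edges of $K_W$), in which the $k$-cycles straddle the two parts while the pendant matchings are distributed in a controlled way. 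Imposing the correct divisibility on $w$ modulo a small function of $k$ is what makes this mixed decomposition go through, and it is here that the constants $2k$ and $6k$ naturally appear.

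\emph{Base range $2k<v<6k$.} Here I would resort to the cyclic method: identify $V(K_v)$, or $V(K_v)$ minus a few infinity points, with $\Z_n$ for a suitable $n$, exhibit a short list of base $k$-suns, and develop them under the additive action of $\Z_n$. Correctness reduces, via partial mixed differences, to checking that the multiset of cycle-edge differences together with the multiset of pendant-edge differences covers each nonzero element of $\Z_n$ the correct number of times, and that the bipartite edges joining $\Z_n$ to the infinity points are covered exactly once. When $k$ is an odd prime, invertibility of every nonzero element of $\Z_k$ makes base $k$-suns easy to assemble, and only a handful of residue classes of $v$ modulo $4k$ need be treated separately; this is how the prime case is settled in full.

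\emph{Main obstacle.} The reduction is essentially uniform, so the real difficulty lies in the base range for general odd $k$. When $k$ is composite, the admissible residues of $v$ modulo $4k$ in $(2k,6k)$ proliferate, and each residue class potentially requires its own family of base $k$-suns; moreover, the pendant matching constrains the difference multisets in a way that the classical odd-cycle constructions do not, so one cannot simply import the Hoffmann--Lindner--Rodger base systems. A uniform construction principle producing base $k$-suns for every admissible $v\in(2k,6k)$ and every odd $k$ would complete the conjecture, but furnishing such a principle beyond the prime case is precisely the piece the present paper leaves open.
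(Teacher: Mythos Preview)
First, the statement is a \emph{conjecture}: the paper does not prove it in general, only (i) reduces it to the window $2k<v<6k$ (Theorem~\ref{main1}) and (ii) settles that window when $k$ is prime (Theorem~\ref{main2}). Your proposal is likewise a strategy sketch rather than a proof, and you correctly flag the composite base range as the open piece; your base-range approach (cyclic constructions via partial mixed differences, with infinity points) is exactly what the paper does in Section~6.

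Your reduction, however, diverges from the paper's and is where the sketch is thin. You split $K_v=K_U\oplus K_W\oplus K_{U,W}$ with $|U|$ in the base window and propose decomposing a ``bipartite-plus-correction'' piece. Since $k$ is odd, no $k$-cycle can sit inside the bipartite graph $K_{U,W}$, so every base sun must borrow cycle edges from $K_W$; ``a few edges'' will not suffice, and you do not say how to control this borrowing uniformly in $v$. The paper sidesteps this entirely: it writes $v=4kg+n$ with $n$ in the base window and decomposes $K_v$ into one copy of $K_n$ (or $K_{4k+n}$), $g$ copies of $K_{4k}+n$ (a complete $K_{4k}$ joined to $n$ external vertices), and one complete multipartite $K_{g\times 4k}$. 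The multipartite piece follows cheaply from the known odd-cycle decomposition of $K_{g\times 2k}$ via a doubling trick (Lemma~\ref{from k-C to k-S}, Theorem~\ref{thm:fiber}); the real engine---occupying all of Section~4---is the construction of $k$-sun systems of $K_{4k}+n$ for every $n\equiv 0,1\pmod 4$ with $2k<n<10k$. Your sketch does not identify this $K_{4k}+n$ building block, and it is precisely what makes the recursion close.
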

Our constructions rely on the existence of $k$-cycle systems of $K_v$,
a problem that has been completely settled in
\cite{AG, BHS, B2003, HLR, S}. More precisely,
 \cite{BHS} and \cite{HLR}
reduce the problem to the orders $v$ in the range $k\leq v < 3k$, with $v$ odd. These cases are then
solved in \cite{AG, S}. For odd $k$, an alternative proof based on $1$-rotational constructions
is given in \cite{B2003}. Further results on $k$-cycle systems of $K_v$ with an automorphism group
acting sharply transitively on all but at most one vertex can be found in \cite{BP, B2004, BDF, WB}.


The main results of this paper focus on the case where $k$ is odd.
By adapting a method used in \cite{HLR} to reduce the spectrum problem for odd cycle systems of the complete graph,
we show that if there is a $k$-sun system of $K_v$ ($k$ odd) whenever $v$ lies in the range $2k< v < 6k$ and
satisfies the obvious necessary conditions, then such a system exists for every admissible $v\geq 6k$.
In other words, we show the following.

\begin{thm}\label{main1}
Let $k\geq 3$ be an odd integer and $v>1$. Conjecture \ref{conj} is true if and only if there exists a $k$-sun system of $K_v$ for all $v$ satisfying the necessary conditions in \eqref{nec} with $2k< v <6k$.
\end{thm}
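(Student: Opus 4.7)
The only-if direction is immediate since $(\ast)$ is necessary. For the if direction, I would proceed by induction on $v$, descending in steps of $4k$ and anchoring at the assumed range $2k<v<6k$. The arithmetic underpinning this is that, for odd $k$, $\gcd(4,k)=1$, so the condition $v(v-1)\equiv 0 \pmod{4k}$ depends only on $v\bmod 4k$; thus admissibility is $4k$-periodic, and if $v\geq 6k$ satisfies $(\ast)$ then $w:=v-4k\geq 2k+1$ also satisfies $(\ast)$ and either lies in the base range (when $v<10k$) or is again handled by the inductive hypothesis.

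The inductive step uses the decomposition $K_v = K_w\oplus H$, where $H=K_{4k}+w$ in the paper's notation: a chosen set $U$ of $4k$ vertices spans a $K_{4k}$ and is joined completely to the remaining $w$ vertices. Granting a $k$-sun system of $K_w$, the step reduces to the following key lemma: \emph{for every admissible $v\geq 6k$, the graph $K_{4k}+(v-4k)$ has a $k$-sun decomposition.} Concatenating the two decompositions then yields a $k$-sun system of $K_v$, closing the induction.

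The main obstacle is this key lemma, which I would attack $1$-rotationally: let $\Z_{4k}$ act on $U$ by translation while fixing each of the $w$ added vertices individually. Then the $\binom{4k}{2}$ edges within $U$ partition into $2k-1$ orbits indexed by the nonzero pure differences of $\Z_{4k}$, and the $4kw$ mixed edges partition into $w$ orbits indexed by the added vertices. A $\Z_{4k}$-invariant $k$-sun decomposition is then tantamount to a family of base $k$-suns whose pure and mixed differences, counted with multiplicity, exactly exhaust these orbits---precisely the partial-mixed-differences set-up advertised in the keywords. The delicate part is to engineer each base sun so that its cycle- and pendant-edges distribute between $\Z_{4k}$ and the added vertices so as to cover every pure difference the correct number of times and every mixed orbit exactly once; I would expect a short case-split according to $v\bmod 4k$, with each case handled by an explicit family of base suns and the rest reducing to a combinatorial count of differences.
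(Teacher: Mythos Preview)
Your inductive reduction is sound and the arithmetic observation that admissibility is $4k$-periodic is correct. The gap is in the key lemma. You need a $k$-sun decomposition of $K_{4k}+w$ for \emph{every} admissible $w>2k$, i.e.\ for unbounded $w$; the sketch ``a short case-split according to $v\bmod 4k$, each case handled by an explicit family of base suns'' glosses over the fact that within each residue class the number of base suns (and the number of mixed orbits to hit) grows linearly with $w$, so the family must be parameterised, not finite. Moreover, a $1$-rotational approach over $\Z_{4k}$ runs into the usual even-order obstruction (the short orbit at difference $2k$, repeated differences from involutions); the paper's difference machinery (Proposition~\ref{mixed diff method}) is stated and used only for groups of \emph{odd} order, and the constructions in Sections~3--4 work over $\Z_k$, not $\Z_{4k}$.

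The paper avoids your unbounded lemma altogether. It proves only the \emph{bounded} version, Theorem~\ref{thm:hole}: $K_{4k}+n$ has a $k$-sun system for $2k<n<10k$ (with a handful of sporadic exceptions for $k=7,11$ that are patched by hand later). The missing ingredient that replaces your induction is a $k$-sun system of the complete multipartite graph $K_{g\times 4k}$ for all $g\geq 3$ (Theorem~\ref{thm:fiber}), obtained by ``doubling'' the known $k$-cycle system of $K_{g\times 2k}$ via Lemma~\ref{from k-C to k-S}. With both pieces one writes $K_{4kg+n}$ as a $K_n$ (or $K_{4k+n}$), $g$ copies of $K_{4k}+n$, and one $K_{g\times 4k}$ (Theorem~\ref{thm:n+4kg}); this handles all $g\neq 2$ in one shot, and $g=2$ is covered by a second application with $n$ replaced by $4k+n$. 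So the paper's route is a GDD-style filling-in-holes construction rather than a step-by-step descent, and the crucial idea you are missing is the multipartite piece $K_{g\times 4k}$, not a stronger version of the $K_{4k}+w$ lemma.
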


We would like to point out that we strongly believe the reduction methods used in
\cite{BHS, HLR} could be further developed to reduce the spectrum problem of other types of graph decompositions of $K_v$.

In Section 6, we construct $k$-sun systems of $K_v$ for every odd prime $k$ whenever $2k< v <6k$ and \eqref{nec} holds. Therefore, as a consequence of Theorem \ref{main1}, we solve the existence problem for $k$-sun systems of $K_v$ whenever $k$ is an odd prime.
\begin{thm}\label{main2}
  For every odd prime $p$ there exists a $p$-sun system of $K_v$ with $v>1$ if and only if
  $v\geq 2p$ and $v(v-1)\equiv 0\pmod{4p}$.
\end{thm}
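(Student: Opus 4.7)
By Theorem \ref{main1}, the existence problem for $p$-sun systems of $K_v$ reduces to constructing such systems for every $v$ with $2p<v<6p$ satisfying \eqref{nec}. Since the cases $p\in\{3,5\}$ are already settled in \cite{FHL, FJLS}, I may assume $p\geq 7$.

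I would begin by enumerating the admissible orders. Since $p$ is an odd prime and $\gcd(v,v-1)=1$, the arithmetic condition in \eqref{nec} is equivalent to $v\equiv 0,1\pmod{p}$ together with $v\equiv 0,1\pmod{4}$. In the range $2p<v<6p$ the first condition yields the six candidates $3p,\,3p+1,\,4p,\,4p+1,\,5p,\,5p+1$, and the mod~$4$ test selects exactly four of them: for $p\equiv 1\pmod{4}$ the admissible values are $\{3p+1,\,4p,\,4p+1,\,5p\}$, and for $p\equiv 3\pmod{4}$ they are $\{3p,\,4p,\,4p+1,\,5p+1\}$.

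For each admissible $v$, I would construct a $p$-sun system by a direct difference method, in line with the term \emph{partial mixed differences} appearing in the keywords. When $v$ is odd (that is, $v\in\{3p,\,4p+1,\,5p\}$ in the relevant subcases) the natural choice is a cyclic construction on $\Z_v$: one exhibits a small family of base $p$-suns whose edge-differences realize a partition of $\Z_v\setminus\{0\}$. When $v$ is even ($v\in\{3p+1,\,4p,\,5p+1\}$), one switches to a $1$-rotational construction on $\Z_{v-1}\cup\{\infty\}$; the base suns must then generate, through their pure differences in $\Z_{v-1}$ together with their mixed differences incident to $\infty$, a partition of $E(K_v)$ under the action of $\Z_{v-1}$. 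In both settings, the odd $p$-cycle systems obtained in \cite{B2003} via primitive roots suggest natural starting cycles for the base suns, to which the $p$ pendant edges remain to be attached.

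The main obstacle will be the delicate difference bookkeeping inside each base sun: the $p$ pendant edges must form a matching on the cycle vertices, use differences disjoint from those consumed by the $p$-cycle, and collectively exhaust the differences that still need to be covered. This coordination typically forces ad-hoc adjustments depending on $v\bmod 4p$ and $p\bmod 4$, and sometimes on the existence of suitable primitive roots or small multiplicative cosets modulo $p$; when $p\mid v$ (so that $\Z_v$ or $\Z_{v-1}$ is not coprime to $p$), short orbits under subgroups of order $p$ may have to be used to absorb differences that would otherwise be repeated. Once base suns with the required difference profile are produced, verifying that their orbits tile $E(K_v)$ becomes mechanical, and Theorem \ref{main2} then follows from Theorem \ref{main1}.
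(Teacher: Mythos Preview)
Your outline is correct and aligns with the paper's approach: reduce via Theorem~\ref{main1}, cite \cite{FHL,FJLS} for $p\in\{3,5\}$, enumerate the four admissible $v$ in $(2p,6p)$ exactly as you do, and then produce base $p$-suns by difference methods. The paper treats $v=4p,4p+1$ precisely as you suggest (one base sun over $\Z_{4k+1}$, respectively $\Z_{4k-1}\cup\{\infty\}$; this even works for arbitrary odd $k$).

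One point of divergence worth flagging: for $v\in\{3p,3p+1,5p,5p+1\}$ the paper does \emph{not} work over the full cyclic group $\Z_v$ or $\Z_{v-1}$, but over the product $\Z_p\times\Z_m$ (with $m=3$ or $5$, adding $\infty$ when $v=mp+1$), and develops under the $\Z_p$ factor. The reason is exactly the short-orbit issue you anticipate: the paper introduces an elementary building block, a \emph{sun of type $(i,j)$}, namely $Dev_{\Z_p\times\{0\}}\big((0,i)\sim(x,i)\sim(x+y,j)\big)$, which has a full $\Z_p$-stabilizer and contributes one controlled pure difference $\pm x$ in level $i$ and one mixed difference $y$ between levels $i$ and $j$. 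A single long-orbit base sun (built, as you guess, from primitive-root cycles) supplies most of the mixed differences; the leftover differences are then mopped up by an explicit count of these short-orbit type-$(i,j)$ suns. So your instinct about short orbits is right, but in the paper they are the central device rather than an occasional patch.
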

Both results rely on the difference methods described in Section 2. These methods are used in
Section 3 to construct specific $k$-cycle decompositions of some subgraphs of $K_{2k}+w$,
which we then use in Section 4 to build $k$-sun systems of $K_{4k} + n$. This is the last ingredient we need in Section 5 to prove Theorem \ref{main1}.
Difference methods are finally used in Section 6 to construct $k$-sun systems of $K_v$ for every odd prime $k$ whenever $2k< v <6k$ and \eqref{nec} holds.

\section{Preliminaries}
Henceforward, $k\geq3$ is an odd integer, and $\ell=\frac{k-1}{2}$.
Also, given two integers $a\leq b$, we denote by $[a,b]$ the interval containing the integers $\{a, a+1, \ldots, b\}$.
If $a>b$, then $[a,b]$ is empty.

In our constructions we make extensive use of the method of partial mixed differences
which we now recall but limited to the scope of this paper.

Let $G$ be an abelian group of odd order $n$ in additive notation, let $W=\{\infty_u\mid 1\leq{u}\leq{w}\}$, and denote by $\G$ a graph with vertices in $V=(G\times [0, m-1]) \ \cup\ W$.
For any permutation $f$ of $V$, we denote
by $f(\Gamma)$ the graph obtained by replacing each vertex of $\Gamma$, say $x$, with $f(x)$.
Letting $\tau_g$, with $g\in G$, be
the permutation of $V$ fixing each $\infty_u\in W$ and mapping $(x,i)\in G\times [0,m-1]$ to $(x+g,i)$,
we call $\tau_g$ the \emph{translation by $g$} and $\tau_g(\G)$ the related translate of $\G$.

We denote by $Orb_{G}(\G) = \{\tau_g(\G)\mid g\in G\}$
the \emph{$G$-orbit} of $\G$, that is, the set of all distinct translates of $\G$,
and by $Dev_G(\G)=\bigcup_{g\in G} \tau_g(\G)$ the graph union of all translates of $\G$.
Further, by $Stab_{G}(\G)=\{g\in G \mid \tau_g(\G)=\G \}$ we denote
the \emph{$G$-stabilizer} of $\G$, namely, the set of translations fixing $\G$. We recall that $Stab_G(\G)$ is a subgroup of $G$,
hence $s=|Stab_G(\G)|$ is a divisor of $n=|G|$.
Henceforward, when $G=\Z_k$, we will simply write $Orb(\G)$, $Dev(\G)$, and $Stab(\G)$.

Suppose now that $\G$ is either a $k$-cycle or a $k$-sun with vertices in $V$.
%
%
For every $i,j\in [0,m-1]$, the list of $(i,j)$-differences of $\G$ is the multiset $\Delta_{ij} \G$
defined  as follows:
\begin{enumerate}
  \item if $\G=(x_1, x_2, \ldots, x_k)$, then
  \begin{align*}
    \Delta_{ij} \G & =
    \big\{a_{h+1}-a_h\mid x_h= (a_h,i), x_{h+1}= (a_{h+1},j), 1\leq{h}\leq{k/s} \big\} \\
     & \cup\
    \big\{a_h-a_{h+1}\mid x_h= (a_h,j), x_{h+1}= (a_{h+1},i), 1\leq{h}\leq{k/s} \big\};
  \end{align*}
  \item if
  $\G=\left(
  \begin{array}{cccc}
    x_1  & x_2      & \ldots  & x_{k} \\
    x'_1 & x'_2     & \ldots  & x'_{k}
  \end{array}
  \right)
  $, then
  \begin{align*}
    \Delta_{ij} \G &= \Delta_{ij}(x_1, x_2, \ldots, x_k)\ \cup\
    \big\{a'_h-a_{h}\mid x_h= (a_h,i), x'_{h}= (a'_{h},j), 1\leq{h}\leq{k/s} \big\} \\
    & \cup\
    \big\{a_h-a'_{h}\mid x_h= (a_h,j), x'_{h}= (a'_{h},i), 1\leq{h}\leq{k/s} \big\}.
  \end{align*}
\end{enumerate}
We notice that when $s=1$ we find the classic concept of list of differences.
Usually, one speaks of \emph{pure or mixed differences} according to whether $i=j$ or not, and when
$m=1$ we simply write $\Delta \G$. This concept naturally extends to a family $\mathcal{F}$ of graphs with vertices
in $V$ by setting $\Delta_{ij} \mathcal{F} = \bigcup_{\G\in\cF} \Delta_{ij}\G$.
Clearly, $\Delta_{ij}\G = -\Delta_{ji}\G$, hence $\Delta_{ij}\cF = -\Delta_{ji}\cF$, for
every $i,j\in [0,m-1]$.

We also need to define the \emph{list of neighbours} of $\infty_u$ in $\cF$, that is,
the multiset $N_{\cF}(\infty_u)$ of the vertices in $V$ adjacent to $\infty_u$
in some graph $\Gamma\in\cF$.

Finally, we introduce a special class of subgraphs of $K_{mn}$. To this purpose,
we take $V(K_{mn})=G \times [0,m-1]$. Letting $D_{ii}\subseteq G\setminus\{0\}$
for every $0\leq i\leq m-1$, and $D_{ij}\subseteq G$ for every $0\leq i< j\leq m-1$,
we denote by
\[
  \big\langle D_{ij}\mid 0\leq i \leq j\leq m-1\big\rangle
\]
the spanning subgraph of $K_{mn}$
containing exactly the edges
$\big\{ (g,i), (g+d,j) \big\}$ for every $g\in G$, $d\in D_{ij}$,
and $0\leq i\leq j\leq m-1$. The reader can easily check that this graph  remains unchanged if we replace any set $D_{ii}$ with $\pm D_{ii}$.

The following result, standard in the context of difference families,
provides us with a method to construct
$\G$-decompositions for subgraphs of $K_{mn}+w$.

\begin{prop}\label{mixed diff method}
Let $G$ be an abelian group of odd order $n$, let $m$ and $w$ be non-negative integers,
and denote by $\mathcal{F}$ a family of $k$-cycles
(resp., $k$-suns) with vertices in $(G\times [0, m-1]) \ \cup\ \{\infty_u\mid u\in\Z_w\}$
satisfying the following conditions:
  \begin{enumerate}
    \item $\Delta_{ij} \mathcal{F}$ has no repeated elements, for every $0\leq i \leq j < m$;
    \item $N_{\mathcal{F}}(\infty_u)=\big\{(g_{u,i},i)\mid 0\leq i <m, g_{u,i}\in G\big\}$
    for every  $1\leq{u}\leq{w}$.
  \end{enumerate}
Then $\bigcup_{\G\in\mathcal{F}} Orb_{G}(\G) = \{\tau_g(\G)\mid g\in G, \G\in\cF\}$ is a $k$-cycle (resp., $k$-sun) system of
$\langle \Delta_{ij}\cF \mid 0\leq{i}\leq{j}\leq m-1 \rangle+w$.
\end{prop}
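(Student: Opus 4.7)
The plan is to verify directly that the set $\mathcal{D} = \bigcup_{\Gamma \in \mathcal{F}} Orb_G(\Gamma)$ decomposes the target graph $K := \langle \Delta_{ij}\mathcal{F} \mid 0\leq i \leq j < m \rangle + w$. This breaks into two tasks: (i) every graph in $\mathcal{D}$ is a subgraph of $K$, and (ii) every edge of $K$ is covered by exactly one edge from exactly one graph in $\mathcal{D}$.

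For (i), I would run through the edges of a fixed $\Gamma \in \mathcal{F}$. An edge with both endpoints in $G \times [0, m-1]$, say $\{(a,i), (b,j)\}$ with $i \leq j$, contributes $b-a$ to $\Delta_{ij}\Gamma \subseteq \Delta_{ij}\mathcal{F}$; translating by $g$ yields $\{(a+g, i), (b+g, j)\}$, which lies in $\langle \Delta_{ij}\mathcal{F}\rangle \subseteq K$ by the very definition of the bracket graph. An edge of the form $\{\infty_u, (a,i)\}$ is mapped by $\tau_g$ to $\{\infty_u, (a+g, i)\}$, which is again an edge of $K$ since $\infty_u$ is joined to every vertex of $G \times [0, m-1]$ in $K$.

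For (ii), I would split the edges of $K$ according to whether they miss $W$ or not. For an edge $e = \{(h, i), (h+d, j)\}$ with $0 \leq i \leq j < m$ and $d \in \pm\Delta_{ij}\mathcal{F}$: hypothesis (1) says $d$ appears exactly once in the multiset $\Delta_{ij}\mathcal{F}$, coming from a unique $\Gamma \in \mathcal{F}$ at a unique position, say from a consecutive pair $(x_{h'}, x_{h'+1}) = ((a, i), (a+d, j))$ in a cycle (or the analogous pendant-pair in a sun). Then $\tau_{h-a}(\Gamma)$ is the unique translate containing $e$. For an edge $e = \{\infty_u, (h, i)\}$, hypothesis (2) singles out the unique representative $(g_{u,i}, i) \in N_\mathcal{F}(\infty_u)$ at level $i$; since $\tau_g$ fixes each $\infty_u$, the edge $e$ appears in $\tau_{h - g_{u,i}}$ applied to the unique $\Gamma \in \mathcal{F}$ realising the adjacency $\{\infty_u, (g_{u,i}, i)\}$, and nowhere else.

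The main obstacle will be the bookkeeping with the stabilizer: when $s = |Stab_G(\Gamma)| > 1$ the orbit $Orb_G(\Gamma)$ has only $|G|/s$ distinct members, while the definition of $\Delta_{ij}\Gamma$ only indexes $h'$ up to $k/s$. I would have to check that the $s$-fold reduction in orbit size matches the $s$-fold reduction in the length of the difference list, so that across all $\Gamma \in \mathcal{F}$ each edge of $K$ still receives exactly one cover. Once that matches up, the rest is a routine orbit-counting argument, and the sun case proceeds identically, with the pendant-edge differences $a'_h - a_h$ playing the same role as the cycle-edge differences.
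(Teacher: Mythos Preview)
Your outline follows the same route as the paper: fix an edge of $K$, locate the unique $\Gamma\in\mathcal F$ and the unique translate of $\Gamma$ containing it, and treat the two edge-types (finite--finite and finite--infinity) separately. However, your uniqueness claim ``$\tau_{h-a}(\Gamma)$ is the unique translate containing $e$'' hides the one genuinely non-routine step, and it is \emph{not} the stabilizer bookkeeping you flag.

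The issue is the pure-difference case $i=j$. The edge $e=\{(h,i),(h+d,i)\}$ arises from the edge $\{(a,i),(a+d,i)\}$ of $\Gamma$ under $\tau_{h-a}$, but it would equally arise under $\tau_{h+d-a}$ provided $2d=0$ in $G$. Nothing in hypothesis~(1) rules this out: the difference list $\Delta_{ii}\Gamma$ records both $d$ and $-d$ from that single edge, each exactly once, so ``unique position'' is satisfied yet two distinct translates could cover $e$. The paper closes this gap by invoking the hypothesis that $|G|$ is odd, hence $G$ has no element of order~$2$, forcing $d\neq -d$ and making the translate genuinely unique. You never use the oddness of $n$, so as written your argument would fail for groups of even order.

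By contrast, the stabilizer issue you call ``the main obstacle'' is essentially absorbed by the definitions: if $\sigma\in Stab_G(\Gamma)$ then $\tau_{h-a}(\Gamma)=\tau_{h-a+\sigma}(\Gamma)$ as graphs, so the two candidate covers coincide in $\mathcal D$ and no double-counting occurs. The paper's proof does not dwell on this point at all.
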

\begin{proof} Let $\cF^* = \bigcup_{\G\in\mathcal{F}} Orb_{G}(\G)$, $K = \langle \Delta_{ij}\cF \mid 0\leq{i}\leq{j}\leq m-1 \rangle$, and let $\epsilon$ be an edge of $K + w$.
We are going to show that $\epsilon$ belongs to exactly one graph of $\cF^*$.

If $\epsilon\in E(K)$, by recalling the definition of $K$ we have that $\epsilon = \{(g,i), (g + d, j)\}$ for some $g\in G$ and $d\in\Delta_{ij} \cF$, with $0\leq i\leq j < m$.
Hence, there is a graph $\G\in\cF$ such that $d\in \Delta_{ij}\G$.
This means that $\G$ contains the edge $\epsilon' = \{(g',i), (g' + d, j)\}$ for some $g'\in G$,
therefore $\epsilon = \tau_{g-g'}(\epsilon') \in \tau_{g-g'}(\G) \in \cF^*$.
To prove that $\epsilon$ only belongs to $\tau_{g-g'}(\G)$, let $\G'$ be any graph in $\cF$ such that $\epsilon\in \tau_x(\G')$, for some $x\in G$.
Since translations preserve differences, we have that
$d\in \Delta_{ij}\tau_x(\G') = \Delta_{ij} \G'$.
Considering that $d\in \Delta_{ij} \G \ \cap\ \Delta_{ij} \G'$ and,
by condition (1),
$\Delta_{ij} \cF$ has no repeated elements, we necessarily have that $\G'=\G$, hence $\tau_{-x} (\epsilon) \in \G$. Again, since $\Delta_{ij} \Gamma$ has no repeated elements
(condition (1)),
and considering that $\epsilon'$ and $\tau_{-x}(\epsilon)$ are edges of $\Gamma$ that yield the same
differences, then $\tau_{-x}(\epsilon) = \epsilon' = \tau_{g'-g}(\epsilon)$, that is,
$\tau_{g'-g+x}(\epsilon) = \epsilon$. Since $G$ has odd order, it has no element of order $2$,
hence $g'-g+x=0$, that is, $x=g-g'$, therefore $\tau_{g-g'}(\Gamma)$ is the only graph of $\cF^*$ containing $\epsilon$.

Similarly, we show that every edge of $(K+w)\setminus{K}$ belongs to exactly one graph of $\cF^*$.
Let $\epsilon = \{\infty_u, (g,i)\}$ for some $u\in \Z_{w}$ and $(g,i)\in G \times [0, m-1]$.
By assumption, there is a graph $\Gamma\in\cF^*$ containing the edge
$\epsilon' = \{\infty_u, (g_{u,i},i)\}$ with $g_{u,i}\in G$.
Hence, $\epsilon = \tau_{g-g_{u,i}}(\epsilon')\in \tau_{g-g_{u,i}}(\Gamma)$.
Finally, if $\epsilon\in \tau_x(\Gamma')$ for some $x\in G$ and $\Gamma'\in\cF$, then
$\{\infty_u, (g-x,i)\}=\tau_{-x}(\epsilon)\in \Gamma'$.
Since condition (2) implies that
$N_{\mathcal{F}}(\infty_u)$ contains exactly one pair from $G\times \{i\}$,
we necessarily have that $\G=\G'$ and $x=g-g_{u,i}$; therefore,
there is exactly one graph of $\cF^*$ containing $\epsilon$.
Condition (2) also implies that $N_{\mathcal{F}}(\infty_u)$ is disjoint from
$\{\infty_u\mid u\in\mathbb{Z}_w\}$, and this guarantees that no graph in $\mathcal{F}^*$
contains edges joining two infinities.
Therefore, $\mathcal{F}^*$ is the desired decomposition of $K+w$.
\end{proof}

Considering that $K_{mn}=\langle D_{ij}\mid 0\leq i \leq j\leq m-1\rangle$ if and only if
$\pm D_{ii} = G\setminus\{0\}$ for every $i\in[0, m-1]$, and $D_{ij}=G$ for every $0\leq i<j\leq m-1$,
the proof of the following corollary to Proposition \ref{mixed diff method} is straightforward.

\begin{cor}\label{mixed diff method for Kmn}
  Let $G$ be an abelian group of odd order $n$, let $m$ and $w$ be non-negative integers,
  and denote by $\mathcal{F}$ a family of $k$-cycles
  (resp., $k$-suns) with vertices in $(G\times [0, m-1]) \ \cup\ \{\infty_u\mid u\in\Z_w\}$
  satisfying the following conditions:
  \begin{enumerate}
    \item $\Delta_{ij} \mathcal{F} =
    \begin{cases}
      G\setminus\{0\} & \text{if $0\leq i=j \leq m-1$};\\
      G &               \text{if $0\leq i<j\leq m-1$};
    \end{cases}
    $
    \item $N_{\mathcal{F}}(\infty_u)=\big\{(g_{u,i},i)\mid 0\leq i <m, g_{u,i}\in G\big\}$
    for every  $1\leq{u}\leq{w}$.
  \end{enumerate}
Then $\bigcup_{\G\in\mathcal{F}} Orb_{G}(\G)$ is a $k$-cycle (resp., $k$-sun) system of $K_{mn} + w$.
\end{cor}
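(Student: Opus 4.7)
The plan is to reduce the statement directly to Proposition \ref{mixed diff method} by setting $D_{ij}:=\Delta_{ij}\mathcal{F}$ for all $0\leq i\leq j\leq m-1$, and then identifying the underlying graph with $K_{mn}+w$ using the characterization recorded just before the corollary.

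First, I would verify that the hypotheses of Proposition \ref{mixed diff method} are met. Condition~(2) of the proposition is literally condition~(2) of the corollary, so nothing is to check there. For condition~(1), the proposition requires each $\Delta_{ij}\mathcal{F}$ to be multiplicity-free; but hypothesis~(1) of the corollary states that, as multisets, $\Delta_{ij}\mathcal{F}$ equals $G\setminus\{0\}$ when $i=j$ and $G$ when $i<j$, and in both cases each element of $G$ occurs exactly once. Hence Proposition \ref{mixed diff method} applies and produces a $k$-cycle (resp., $k$-sun) system of
\[
K := \big\langle \Delta_{ij}\mathcal{F}\mid 0\leq i\leq j\leq m-1\big\rangle + w.
\]

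Second, I would identify $K$ with $K_{mn}+w$. By the remark preceding the corollary, $\langle D_{ij}\mid 0\leq i\leq j\leq m-1\rangle = K_{mn}$ precisely when $\pm D_{ii}=G\setminus\{0\}$ for every $i\in[0,m-1]$ and $D_{ij}=G$ for every $0\leq i<j\leq m-1$. Since $\pm(G\setminus\{0\})=G\setminus\{0\}$, hypothesis~(1) of the corollary supplies exactly these equalities for the choice $D_{ij}=\Delta_{ij}\mathcal{F}$, so $K=K_{mn}+w$ and the conclusion follows.

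No step of this plan presents a genuine obstacle: the corollary is essentially a rewording of Proposition \ref{mixed diff method} in the special case where the lists of differences cover all of $G$ (respectively $G\setminus\{0\}$ on the diagonal), which is why the authors themselves describe its proof as straightforward. The only point worth stating carefully is the passage from ``$\Delta_{ij}\mathcal{F}$ equals $G$ or $G\setminus\{0\}$ as a multiset'' to ``$\Delta_{ij}\mathcal{F}$ has no repeated elements,'' so that Proposition \ref{mixed diff method} can indeed be invoked.
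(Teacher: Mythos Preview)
Your proposal is correct and follows exactly the approach indicated in the paper: invoke Proposition~\ref{mixed diff method} after noting that hypothesis~(1) forces each $\Delta_{ij}\mathcal{F}$ to be repetition-free, and then use the characterization of $K_{mn}$ stated just before the corollary to identify $\langle \Delta_{ij}\mathcal{F}\rangle+w$ with $K_{mn}+w$.
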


\section{Constructing $k$-cycle systems of $\langle D_{00}, D_{01}, D_{11} \rangle + w$}
In this section, we recall and generalize some results from \cite{HLR} in order to provide  conditions on
$D_{00}, D_{01}, D_{11}\subseteq \Z_k$ that guarantee the existence of
a $k$-cycle system for the subgraph $\langle D_{00}, D_{01}, D_{11} \rangle + w$ of
$K_{2k}+w$, where $V(K_{2k}) = \Z_k \times \{0,1\}$.

We recall that every connected 4-regular Cayley graph over an abelian group has a Hamilton cycle system \cite{BeFaMa89}
and show the following.

\begin{lem}\label{ab, 0, 0} Let $[a,b], [c,d]\subseteq [1,\ell]$.
The graph $\big\langle  \left[a,b\right], \varnothing, \left[c,d\right] \big\rangle$
has a $k$-cycle system whenever both $[a,b]$ and $[c,d]$ satisfy the following condition:
the interval has even size or contains an integer coprime with $k$.
\end{lem}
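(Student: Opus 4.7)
The plan is to exploit the fact that $D_{01}=\varnothing$ to split the graph into two independent pieces. Concretely, the graph $\langle [a,b],\varnothing,[c,d]\rangle$ is the vertex-disjoint union of the two Cayley graphs $\Cay(\Z_k,\pm[a,b])$ on $\Z_k\times\{0\}$ and $\Cay(\Z_k,\pm[c,d])$ on $\Z_k\times\{1\}$. A $k$-cycle decomposition of the whole graph is then simply the union of Hamilton decompositions of the two Cayley graphs, so it suffices to prove that for every interval $I\subseteq[1,\ell]$ satisfying the hypothesis, the Cayley graph $\Cay(\Z_k,\pm I)$ admits a Hamilton decomposition.

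To build such a decomposition, I would partition $I$ into ``blocks'' of size one or two, each of which independently gives rise to a disjoint union of Hamilton cycles. A singleton $\{i\}$ with $\gcd(i,k)=1$ produces $\Cay(\Z_k,\pm\{i\})$, which is itself a Hamilton cycle; and a pair $\{i,j\}$ with $\gcd(i,j,k)=1$ produces a connected $4$-regular Cayley graph, hence a Hamilton-decomposable graph by the result of Bermond--Favaron--Maheo quoted just before the lemma. Any pair of consecutive integers $\{i,i+1\}$ automatically qualifies since $\gcd(i,i+1)=1$. Crucially, since $k$ is odd, also any pair of the form $\{i,i+2\}$ qualifies, because $\gcd(i,i+2)$ divides $2$ and hence $\gcd(i,i+2,k)=1$.

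I would then split into cases according to the parity of $|I|=b-a+1$. When $|I|$ is even, the elements of $I$ are grouped into consecutive pairs $\{a,a+1\},\{a+2,a+3\},\ldots,\{b-1,b\}$, and each pair gives two Hamilton cycles. When $|I|$ is odd, pick an element $x\in I$ coprime with $k$. If $x\in\{a,b\}$, strip $x$ off (it contributes one Hamilton cycle) and pair the remaining even-length interval consecutively. If $a<x<b$, the parity of $x-a$ (which equals the parity of $b-x$, since $b-a$ is even) splits the discussion: when $x-a$ is even, both $[a,x-1]$ and $[x+1,b]$ have even size and are paired consecutively, while $\{x\}$ is taken alone; when $x-a$ is odd, pair $[a,x-2]$ and $[x+2,b]$ consecutively (both now of even size), take the additional block $\{x-1,x+1\}$, and again pull out $\{x\}$ alone.

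The main obstacle is this last sub-case: removing $x$ from $I$ leaves two intervals of odd size, neither of which can be pair-partitioned into consecutive doubletons. The resolution is to use the non-consecutive block $\{x-1,x+1\}$, which is admissible precisely because $k$ is odd so that $\gcd(x-1,x+1,k)=1$. Combining the two Hamilton decompositions just produced for $\Cay(\Z_k,\pm[a,b])$ and $\Cay(\Z_k,\pm[c,d])$ gives the required $k$-cycle system of $\langle [a,b],\varnothing,[c,d]\rangle$.
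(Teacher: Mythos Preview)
Your proposal is correct and follows essentially the same approach as the paper: both reduce to Hamilton-decomposing $\Cay(\Z_k,\pm I)$ by partitioning $I$ into pairs at distance $1$ or $2$ (handled via Bermond--Favaron--Mah\'eo) together with at most one singleton coprime to $k$. You spell out the case analysis for this partition in more detail than the paper does, but the strategy is identical.
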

\begin{proof}
The graph $\big\langle  \left[a,b\right], \varnothing, \left[c,d\right] \big\rangle$ decomposes into $\big\langle  \left[a,b\right], \varnothing,  \varnothing \big\rangle$
and $\big\langle   \varnothing, \varnothing, \left[c,d\right] \big\rangle$.
The first one is the Cayley graph
$\Gamma = \Cay(\Z_k, [a,b])$ with further $k$ isolated vertices, while the second one
is isomorphic to $\big\langle   \left[c,d\right], \varnothing, \varnothing \big\rangle$.
Therefore,  it is enough to show that $\Gamma$ has a $k$-cycle system.

Note that $\G$ decomposes into the subgraphs $\Cay(\Z_k, D_i)$, for $0\leq i\leq t$, whenever
the sets $D_i$ between them partition $\left[a, b\right]$. By assumption,
$[a,b]$ has even size or contains an integer coprime with $k$. Therefore, we can assume that
for every $i>0$ the set $D_i$ is a pair of integers at distance 1 or 2, and
$D_0$ is either empty or contains exactly one integer coprime with $k$.
Clearly, $\Cay(\Z_k, D_0)$ is either the empty graph or a $k$-cycle, and
the remaining $\Cay(\Z_k, D_i)$
are 4-regular Cayley graphs. Also, for every $i>0$ we have that
$D_i$ is a generating set of $\Z_k$
(since $k$ is odd and $D_i$ contains integers at distance 1 or 2), hence
the graph $\Cay(\Z_k, D_i)$ is connected. It follows that each $\Cay(\Z_k, D_i)$, with $i>0$, decomposes into two $k$-cycles, thus the assertion is proven.
\end{proof}

\begin{lem}\label{l, S S+1, 0}

Let $S \subseteq \{2i-1\mid 1\leq i\leq \ell\}$.
Then
there exist $k$-cycle systems for the graphs
$\big\langle  \{\ell\}, S\ \cup\ (S+1), \varnothing \big\rangle$
and $\big\langle  \{\ell\}, (S+1)\ \cup\ (S+2), \varnothing \big\rangle$.
\end{lem}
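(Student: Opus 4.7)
The plan is to view $\big\langle \{\ell\},\, S\cup(S+1),\, \varnothing\big\rangle$ as the edge-disjoint union of the \emph{part-0 cycle} $C_0 = (0, \ell, 2\ell, \ldots, (k-1)\ell)$ on $\Z_k\times\{0\}$ together with, for each $s\in S$, a bipartite $2k$-cycle $B_s=\big\langle\varnothing,\{s,s+1\},\varnothing\big\rangle$. Indeed, because $s$ and $s+1$ are consecutive differences, the alternating walk $(0,0),(s,1),(-1,0),(s-1,1),(-2,0),\ldots$ visits every vertex of $\Z_k\times\{0,1\}$ exactly once before closing, so each $B_s$ really is a single $2k$-cycle. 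The goal is then to decompose $C_0\cup\bigcup_{s\in S}B_s$ into $2|S|+1$ $k$-cycles.

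Write $S=\{s_1,\ldots,s_t\}$. For each $i$ I would delete from $B_{s_i}$ a carefully chosen part-1 vertex $u_i$ (together with its two incident edges), obtaining a path $P_i$ of length $2k-2$ whose two endpoints are part-0 vertices differing by $1$ in the first coordinate. A direct computation using $2\ell+1=k$ shows that the midpoint of $P_i$ is again a part-0 vertex, and that both ``chord'' edges joining this midpoint to the endpoints of $P_i$ have difference $\pm\ell\pmod k$ and hence lie in $C_0$; adding them splits $P_i$ into two $k$-cycles. The pivotal choice is to take $u_i$ so that its part-0 neighbours are the $(2i-2)$-th and $(2i)$-th vertices of $C_0$ in its cyclic order: then the two chords used for $B_{s_i}$ are exactly the $C_0$-edges at cyclic positions $2i-2$ and $2i-1$, and as $i$ ranges over $\{1,\ldots,t\}$ these $2t$ chords form $t$ disjoint consecutive pairs of $C_0$-edges.

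The remaining $k-2t$ edges of $C_0$ form a path $Q$ from the $(2t)$-th vertex of $C_0$ back to the $0$-th vertex, and I would close the construction by defining the \emph{base cycle} as the concatenation of $Q$ with the $t$ two-edge detours $P_i-u_i-P_i'$, chained in order through the vertices $0,2,\ldots,2t$ of $C_0$; this single $k$-cycle uses the $k-2t$ leftover part-0 edges together with the $2t$ bipartite edges set aside in the previous step. Combined with the $2t$ split cycles, this yields the required $2t+1$ $k$-cycles. The main technical point is to verify that the base cycle is simple, which reduces to the disjointness of the detour-chain vertices $\{0,2,\ldots,2t\}$ from the interior of $Q$ at $C_0$-positions $\{2t+1,\ldots,k-1\}$, a fact immediate from $2t<k$. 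For $\big\langle\{\ell\},(S+1)\cup(S+2),\varnothing\big\rangle$ the argument is identical: each consecutive-difference pair $\{s+1,s+2\}$ still gives a bipartite $2k$-cycle, and the midpoint chord identity is unchanged by the shift by $1$.
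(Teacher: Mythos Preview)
Your approach is correct in outline but differs substantially from the paper's. The paper does not construct the decomposition at all: for $\langle\{\ell\},S\cup(S+1),\varnothing\rangle$ it simply cites Lemma~3 of Hoffman--Lindner--Rodger, and for $\langle\{\ell\},(S+1)\cup(S+2),\varnothing\rangle$ it observes that the permutation fixing $\Z_k\times\{0\}$ pointwise and sending $(i,1)\mapsto(i+1,1)$ carries the first graph isomorphically onto the second. What you are doing is, in effect, rediscovering the HLR construction; this is a legitimate self-contained alternative and, once correct, buys you independence from the external reference.

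There is, however, a gap in your simplicity argument for the base cycle. You check that the part-$0$ vertices on the detour chain (at $C_0$-positions $0,2,\ldots,2t$) are disjoint from those interior to $Q$, but you never verify that the part-$1$ vertices $u_1,\ldots,u_t$ are pairwise distinct, and this is not automatic. With your choice one finds $u_i=(s_i+(2i-2)\ell,1)\equiv(s_i-(i-1),1)\pmod{k}$, using $2\ell\equiv-1$; so you need $s_i-s_j\not\equiv i-j\pmod{k}$ for $i\neq j$. This does hold: ordering $S=\{s_1<\cdots<s_t\}\subseteq\{1,3,\ldots,2\ell-1\}$, the gaps between odd numbers give $s_i-s_j\geq 2(i-j)>i-j>0$ for $i>j$, and both sides lie in $[1,k-2]$, so no wrap-around occurs. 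You should add this verification; without it the claim that the base cycle is simple is incomplete.
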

\begin{proof}
We note that the result is trivial when $S=\varnothing$, since
$\big\langle  \{\ell\}, \varnothing, \varnothing \big\rangle$ is a $k$-cycle.

The existence of a $k$-cycle system of
$\G = \big\langle  \{\ell\}, S\ \cup\ (S+1), \varnothing \big\rangle$ has been proven in
\cite[Lemma 3]{HLR} when $S\subseteq\{2i-1\mid 1\leq i\leq \ell\}$. Consider now the permutation
$f$ of $\Z_{k}\times\{0,1\}$ fixing $\Z_{k}\times\{0\}$ pointwise, and mapping $(i,1)$ to
$(i+1,1)$ for every $i\in \Z_k$. It is not difficult to check that $f(\G) = \big\langle  \{\ell\}, (S+1)\ \cup\ (S+2), \varnothing \big\rangle$
which is therefore isomorphic to $\G$, and hence it has a $k$-cycle system.
\end{proof}

\begin{lem}\label{+r}\label{0, D, 0}
Let $r,s$ and $s'$ be integers such that
$1\leq s\leq s' \leq \min\{s+1,\ell\}$,  and $0<r \not\equiv s+s' \pmod{2}$. Also, let
$D\subseteq [0,k-1]$ be a non-empty interval of size $k-(s+s'+2r)$. Then
 there is a cycle $C=(x_1, x_2, \ldots, x_k)$ of
 $\G = \big\langle [1+\epsilon,s+\epsilon], D, [1+\epsilon, s'+\epsilon] \big\rangle + r$, for every
 $\epsilon\in\{0,1\}$,
 such that $Orb(C)$ is a $k$-cycle system of $\G$.
  Furthermore, if $u=0$ or $u=1-\epsilon=1 \leq s-1$, then
   \begin{enumerate}
   \item $Dev\big(\{x_{2-u}, x_{3-u}\}\big)$ is a $k$-cycle with vertices in $\Z_k\times \{0\}$;
   \item $Dev\big(\{x_{4+u}, x_{5+u}\}\big)$ is a $k$-cycle with vertices in $\Z_k\times \{1\}$.
 \end{enumerate}
\end{lem}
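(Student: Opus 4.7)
The plan is to exhibit an explicit base $k$-cycle $C = (x_1, x_2, \ldots, x_k)$ with vertices in $(\Z_k \times \{0,1\}) \cup \{\infty_1, \ldots, \infty_r\}$, and then apply Proposition \ref{mixed diff method} (with $G = \Z_k$, $m = 2$, $w = r$) to conclude that $Orb(C)$ is a $k$-cycle system of $\Gamma$. Concretely, $C$ must contain $s$ edges inside layer $0$ realising each element of $[1+\epsilon, s+\epsilon]$ once, $s'$ edges inside layer $1$ realising each element of $[1+\epsilon, s'+\epsilon]$ once, $|D| = k - s - s' - 2r$ mixed edges realising each element of $D$ once, and, for every $u \in [1,r]$, exactly two cycle-edges at $\infty_u$, one to each layer.

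A quick parity check explains the hypothesis $r \not\equiv s + s' \pmod 2$: as one traverses $C$, the current layer switches exactly at the mixed edges and at the infinities (each $\infty_u$ bridging the two layers), so the total number of switches $|D| + r$ must be even. Combined with $|D| = k - s - s' - 2r$ and $k$ odd, this parity requirement is equivalent to $r + s + s'$ being odd, which is precisely what is assumed.

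The heart of the proof is the explicit construction of $C$. I would describe $C$ as the concatenation of three blocks: a length-$s$ path inside layer $0$ realising the differences $[1+\epsilon, s+\epsilon]$; a middle region in which the $r$ infinities are visited and the $|D|$ mixed differences of $D$ are spent in a controlled zigzag between the two layers; and a length-$s'$ path inside layer $1$ realising $[1+\epsilon, s'+\epsilon]$. The constraint $s \leq s' \leq s+1$ makes the two pure-layer segments compatible in length, and the fact that $D$ is an interval lets each of its elements be spent exactly once along the zigzag. Once the positions are arranged so that the short edge of difference $1+\epsilon$ sits at $\{x_{2-u}, x_{3-u}\}$ in layer $0$ and at $\{x_{4+u}, x_{5+u}\}$ in layer $1$, the two hypotheses of Proposition \ref{mixed diff method} are satisfied by construction, so $Orb(C)$ decomposes $\Gamma$; moreover, since $1+\epsilon \in \{1,2\}$ is coprime to the odd integer $k$, developing either short edge under $\Z_k$ yields a single $k$-cycle inside the corresponding layer, giving the furthermore part. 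The side-condition $u = 1 - \epsilon = 1 \leq s-1$ forces $\epsilon = 0$ and $s \geq 2$, which is precisely what is needed to fit this starting pattern when $u = 1$. The main obstacle is this combinatorial bookkeeping: distributing the infinities and the mixed edges around $C$ so that the walk closes up and the designated short edges land in the prescribed positions, uniformly across both values of $\epsilon$.
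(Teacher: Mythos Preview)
Your plan coincides with the paper's: construct an explicit base cycle out of a zigzag path in layer~$0$, a zigzag path in layer~$1$, a mixed zigzag, and a run through the $\infty_h$'s, then invoke Proposition~\ref{mixed diff method}. Two small discrepancies with the paper's actual cycle: the blocks are ordered as (layer-$0$ zigzag)--(one mixed edge of difference~$0$)--(layer-$1$ zigzag)--(remaining mixed zigzag)--(infinities), rather than lumping all mixed edges and infinities together in the middle; and the edge $\{x_{2-u},x_{3-u}\}$ carries difference $u+1+\epsilon$ (so $2$ when $u=1$, $\epsilon=0$), not $1+\epsilon$ as you wrote --- still coprime to the odd integer $k$, so your conclusion for the ``furthermore'' part survives. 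What you leave as ``combinatorial bookkeeping'' is exactly what the paper supplies and what your write-up lacks: closed-form definitions of two vertex families $a_i$ and $b_j$ whose differences are read off directly, followed by a translation of layer~$1$ to move $D$ from $[0,t-1]$ to an arbitrary interval of the same length. Without those explicit formulas the argument is a correct outline but not yet a proof.
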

\begin{proof}
Set $t = k-(s+s'+2r)$ and let
$\Omega = \big\langle [1+\epsilon,s+\epsilon], [0,t-1], [1+\epsilon, s'+\epsilon] \big\rangle + r$.
For $i\in [0, s+s'+1]$ and $j\in[0, t+r-1]$, let $a_i$ and $b_j$ be the elements of $\Z_{k}\times\{0,1\}$ defined
as follows:
 \begin{align*}
  a_i &=
  \begin{cases}
    \left(-\frac{i}{2}, 0\right)     & \text{if $i\in[0, s]$ is even},\\
    \left(-s -\epsilon +\frac{i-1}{2}, 0\right)
  & \text{if $i\in[1,s]$ is odd},\\
    a_{2s+1-i} + (0,1)               & \text{if $i\in [s+1, 2s+1]$},\\
    (-s'-\epsilon,1)
  & \text{if $i=s+s'+1 > 2s+1$},
  \end{cases} \\
  b_j &=
  \begin{cases}
    \left(\frac{j}{2}, 0\right)                    & \text{if $j\in[0, t +r -2]$ is even},\\
    \left(t-\frac{j+1}{2}, 1\right)            & \text{if $j\in[1, t-1]$ is odd},\\
    \left(t+ \left\lfloor\frac{j-t}{2}\right\rfloor, 1\right)   & \text{if $j\in [t, t+r-2]$ is odd},\\
    a_{s+s'+1}                                 & \text{if $j=t+r-1$}.
  \end{cases}
  \end{align*}
Since the elements $a_i$ and $b_j$ are pairwise distinct, except for $a_0=b_0$ and $a_{s+s'+1} = b_{t+r-1}$,
then the union $F$ of the following two paths is a $k$-cycle:
\begin{align*}
  P &= a_0 \sim a_1 \sim \ldots \sim a_{s+s'+1}, \\
  Q &= b_0 \sim b_1 \sim \ldots \sim b_{t-1}\sim \infty_1 \sim b_t \sim \infty_2 \sim b_{t+1} \sim \ldots \sim
  \infty_r \sim b_{t+r-1}.
\end{align*}
Since $\Delta_{ij} F =\Delta_{ij} P\ \cup\ \Delta_{ij} Q$, for $i,j\in\{0,1\}$, where
\[
\begin{array}{lll}
   \Delta_{00} P = \pm [1 +\epsilon, s+\epsilon],
 & \Delta_{01} P =  \{0\},
 & \Delta_{11} P = \pm [1 +\epsilon, s' +\epsilon],\\
   \Delta_{00} Q = \varnothing,
 & \Delta_{01} Q = [1,t-1],
 & \Delta_{11} Q = \varnothing,
\end{array}
\]
and considering that $N_F(\infty_h) = N_Q(\infty_h) = \{b_{t+h-2}, b_{t+h-1}\}$ for every
$h\in[1,r]$,
Proposition \ref{mixed diff method}
guarantees that $Orb(F)$ is a $k$-cycle system of $\Omega$. Furthermore,
if $u=0$ or $u=1-\epsilon=1\leq s-1$, then
\[
  \pm(a_{s-u} - a_{s-u-1}) = \pm (a_{s+u+2} - a_{s+u+1}) = \pm (u+\epsilon+1,0).
\]
Since $k$ is odd, we have that
$Dev( \{a_{s-u-1}, a_{s-u}\} )$ and $Dev( \{a_{s+u+2}, a_{s+u+1}\} )$ are $k$-cycles with
vertices in $\Z_{k}\times\{0\}$ and $\Z_{k}\times\{1\}$, respectively.

If $D=[g,g+t-1]$ is any interval of $[0, k-1]$ of size $t$, and $f$ is the permutation of $\Z_{k}\times\{0,1\}$
fixing $\Z_{k}\times\{0\}$ pointwise, and mapping $(i,1)$ to $(i + g, 1)$ for every $i\in \Z_k$,  one can check that $C=f(F)$ is the desired $k$-cycle of $\G=f(\Omega)$.
\end{proof}

\begin{lem}\label{+l} \

  \begin{enumerate}
    \item Let $\ell$ be odd.
    If $\G$ is a $1$-factor of $K_{2k}$, then $\G+\ell$ decomposes into $k$ cycles of length $k$,
      each~of~which contains exactly one edge of $\G$.
      Furthermore, if $\G = \big\langle \varnothing, \{d\}, \varnothing \big\rangle$, then there exists a
      $k$-cycle $C=(c_1, c_2, \ldots, c_{k})$ of $\G+\ell$, with $c_1 \in \Z_{k}\times\{0\}$ and
      $c_2 \in \Z_{k}\times\{1\}$, such that
      \[
      \text{$Dev(\{c_1, c_2\})=\G$ and $Orb(C)$ is a $k$-cycle system of $\G+\ell$}.
      \]
    \item Let $\ell$ be even.
      If $\G$ is a $k$-cycle of order $2k$,
      then $\G+\ell$ decomposes into $k$ cycles of length $k$, each of which contains exactly one edge of $\G$.
      Furthermore, if $\G = \big\langle \{d\}, \varnothing, \varnothing \big\rangle$ and $d$ is coprime with $k$,
      then there exists a $k$-cycle $C=(c_1, c_2, \ldots, c_{k})$ of $\G+\ell$,
      with $c_1,c_2 \in \Z_{k}\times\{0\}$, such that
      \[
      \text{$Dev(\{c_1, c_2\})$ is the $k$-cycle of $\G$ and $Orb(C)$ is a $k$-cycle system of $\G+\ell$}.
      \]
  \end{enumerate}
\end{lem}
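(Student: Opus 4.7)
The plan is to reduce both parts to their ``Furthermore'' subcases—where $\G$ has a specific translation-invariant form—and then prove those by exhibiting an explicit base $k$-cycle $C$ whose $\Z_k$-orbit, via Proposition \ref{mixed diff method}, gives the decomposition. The reduction is essentially for free: every $1$-factor of $K_{2k}$ is isomorphic to $\langle\varnothing,\{d\},\varnothing\rangle$, and every $k$-cycle of order $2k$ is isomorphic to $\langle\{d\},\varnothing,\varnothing\rangle$ with $\gcd(d,k)=1$. An isomorphism realising this extends by the identity on the $\ell$ infinities to an isomorphism of $K_{2k}+\ell$, which transports the special-case cycle decomposition to one for the general $\G$ and preserves the property that each cycle contains exactly one edge of $\G$.

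For the special cases I would take
\[
  C = \bigl((0,0),\ (d,\epsilon),\ \infty_1,\ c_4,\ \infty_2,\ c_6,\ \ldots,\ c_{k-1},\ \infty_\ell\bigr),
\]
with $\epsilon=1$ in Part (1) and $\epsilon=0$ in Part (2), so that $Dev(\{c_1,c_2\})=\G$. The infinities $\infty_u$ occupy the odd positions $c_{2u+1}$ for $u=1,\ldots,\ell$ (in particular $c_k=\infty_\ell$), and each non-infinity $c_{2u}$ with $u\geq 2$ is placed in $\Z_k\times\{0\}$ or $\Z_k\times\{1\}$ so that $\infty_u$ has one neighbour on each side; this forces the sides of the $c_{2u}$ to alternate, starting from the side of $c_2$. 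The first coordinates of the $c_{2u}$ are then chosen freely subject only to pairwise distinctness of the $c_i$, which is trivial since each side has $k$ vertices and at most $\ell-1<k$ of them get placed per side. Because the only non-infinity edge of $C$ is $c_1c_2$, condition (1) of Proposition \ref{mixed diff method} is immediate: $\Delta_{01}C=\{d\}$ and $\Delta_{00}C=\Delta_{11}C=\varnothing$ in Part (1), and $\Delta_{00}C=\pm\{d\}$ and $\Delta_{01}C=\Delta_{11}C=\varnothing$ in Part (2). Condition (2) holds by construction. Applying the proposition yields $Orb(C)$ as a $k$-cycle system of $\langle\Delta_{ij}C\rangle+\ell=\G+\ell$.

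The main obstacle is that the wrap-around edge $\{c_k,c_1\}=\{\infty_\ell,(0,0)\}$ also needs the infinity-neighbour condition at $\infty_\ell$. The alternation places $c_{k-1}=c_{2\ell}$ on the same side as $c_2$ iff $\ell$ is odd; since $c_1\in\Z_k\times\{0\}$, we need $c_{k-1}\in\Z_k\times\{1\}$. This forces $\ell$ odd in Part (1) (where $c_2\in\Z_k\times\{1\}$) and $\ell$ even in Part (2) (where $c_2\in\Z_k\times\{0\}$). So the split of the lemma by parity of $\ell$ is precisely what makes the wrap-around work; once this is noticed, the verification is routine.
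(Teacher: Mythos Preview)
Your proof is correct and follows essentially the same approach as the paper's: reduce the general statement to the translation-invariant case by a vertex permutation of $K_{2k}$ (extended by the identity on the infinities), and in that case build a single base cycle $C=(c_1,c_2,\infty_1,*,\infty_2,*,\ldots,\infty_\ell)$ whose only non-infinity edge is $\{c_1,c_2\}$, with the non-infinity vertices alternating sides so each $\infty_u$ has one neighbour in $\Z_k\times\{0\}$ and one in $\Z_k\times\{1\}$, then apply Proposition~\ref{mixed diff method}. The paper makes the specific normalisation $d=0$ (Part~1) or $d=\ell$ (Part~2) and writes down explicit first coordinates for the intermediate $c_j$, whereas you keep $d$ generic and leave those choices free, but the construction and the parity analysis at the wrap-around edge are the same.
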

\begin{proof} Permuting the vertices of $K_{2k}$ if necessary, we can assume that
$\G$ is the $1$-factor $\G_0= \big\langle \varnothing, \{0\}, \varnothing \big\rangle$ when $\ell$ is odd,
and the $k$-cycle $\G_1=\big\langle \{\ell\},\varnothing, \varnothing \big\rangle$ (of order $2k$) when $\ell$ is even.
For $h\in\{0, 1\}$,
let $C_h = (c_{h,1}, c_{h,2}, \infty_1, c_3, \infty_2, c_4, \ldots, \infty_{\ell-1}, c_{\ell+1}, \infty_\ell)$ be
the $k$-cycle of $\G_h + \ell$, where
\[
  c_{h,1} = \big(0, 1-h\big),\; c_{h,2} = \big(h\ell, 0\big),\;\text{and}\;
  c_j =
  \begin{cases}
    \left( \frac{j-1}{2}, 1\right) & \text{if $j\in[3, \ell+1]$ is odd}, \\
    \left( \frac{j}{2}, 0\right)   & \text{if $j\in[4, \ell+1]$ is even}.
  \end{cases}
\]
Note that the sets $\Delta_{ij} C_h$ are empty, except for $\Delta_{01} C_{0}=\{0\}$
and $\Delta _{00} C_1 = \{\pm\ell\}$. Also, the two neighbours of
$\infty_u$ in $C_h$ belong to $\Z_{k}\times \{0\}$ and $\Z_{k}\times \{1\}$, respectively.
Hence, Proposition \ref{mixed diff method} guarantees that
$Orb(C_h)$ is a $k$-cycle system of $\G_h+\ell$, for $h\in\{0,1\}$. We finally notice that
$Dev(\{c_{h,1}, c_{h,2}\})= \G_h$ (up to isolated vertices)
and this completes the proof.
\end{proof}

The following result  has been proven in \cite{HLR}.

\begin{lem}\label{HLR:1factorization}
  Let $D\subseteq \left[1, \ell\right]$.
  The subgraph $\langle D,\{0\}, D \rangle$ of $K_{2k}$ has a $1$-factorization.
\end{lem}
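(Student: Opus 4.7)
The graph $\Gamma = \langle D, \{0\}, D \rangle$ on vertex set $V = \Z_k \times \{0,1\}$ is $(2|D|+1)$-regular with $|V|=2k$ even. Since $k$ is odd, it is a Cayley graph on $\Z_k \times \Z_2 \cong \Z_{2k}$, whose connection set contains the unique element of order $2$ (corresponding to the $0$-matching $\{\{(g,0),(g,1)\} : g\in\Z_k\}$). My plan is to argue by induction on $|D|$.

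The base case $|D|=0$ is immediate, since $\Gamma$ is itself a perfect matching. For $|D|=1$ with $D=\{d\}$, the graph $\Gamma$ decomposes into $\gcd(d,k)$ copies of the prism $C_m \mathbin{\square} K_2$, where $m = k/\gcd(d,k)$ is odd. For each such component, labelling the two cycles as $u_0,\ldots,u_{m-1}$ and $v_0,\ldots,v_{m-1}$ with vertical edges $u_iv_i$, one verifies directly that
\[
\begin{aligned}
F_1 &= \{u_{2j}u_{2j+1}, v_{2j}v_{2j+1} : 0\le j\le (m-3)/2\} \cup \{u_{m-1}v_{m-1}\},\\
F_2 &= \{u_{2j+1}u_{2j+2}, v_{2j+1}v_{2j+2} : 0\le j\le (m-3)/2\} \cup \{u_0v_0\},\\
F_3 &= \{u_{m-1}u_0, v_{m-1}v_0\} \cup \{u_iv_i : 1\le i\le m-2\}
\end{aligned}
\]
form a $1$-factorization; combining across components yields one for $\Gamma$.

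For the inductive step, assume $\cF$ is a $1$-factorization of $\langle D,\{0\},D\rangle$ and let $d\in[1,\ell]\setminus D$. The new edges in $\langle D\cup\{d\},\{0\},D\cup\{d\}\rangle$ form $\langle\{d\},\varnothing,\{d\}\rangle$, a $2$-regular graph consisting of pairs of odd cycles of length $m = k/\gcd(d,k)$ (one cycle per part per component). Since odd cycles cannot be split into two matchings, the two new $1$-factors must incorporate $0$-matching edges borrowed from existing members of $\cF$. The idea is to ``open'' each such pair of odd cycles by inserting one $0$-matching edge taken from some $F\in\cF$, turning the configuration into a graph that does admit a partition into two matchings, and then to compensate by returning the displaced pure-difference edges to $F$ so that it remains a valid $1$-factor.

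The main obstacle is showing that all these local rearrangements can be carried out coherently, respecting the global parity constraint that each $1$-factor must contain an odd number of $0$-matching edges summing to $k$. This is delicate when $\gcd(d,k)>1$ for several values of $d$, since each such $d$ demands multiple $0$-matching bridges and the total demand can a priori exceed $k$. A clean resolution, as in \cite{HLR}, avoids induction via a direct construction: for each $d \in D$ one exhibits two matchings that use $(k-1)/2$ diff-$d$ edges in each part and are completed by a single $0$-matching edge, together with one final matching that collects the remaining $0$-matching edges and one leftover diff-$d$ edge per part for every $d$. This selection reduces to finding $|D|$ pairwise vertex-disjoint edges in $\Cay(\Z_k,D)$, one at each difference, which is feasible since $2|D|\le k-1$.
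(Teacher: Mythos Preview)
The paper does not give its own proof of this lemma; it simply cites \cite{HLR}. So there is no in-paper argument to compare against---your job is just to supply a correct proof.

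What you have written is not one. You set up an induction on $|D|$, handle $|D|\le 1$ correctly, and then explicitly concede that the inductive step hits a ``main obstacle'' you do not resolve. You then pivot to a direct construction, but the sketch you give is incorrect in general. You claim that for each $d\in D$ one can form two $1$-factors, each consisting of $(k-1)/2$ difference-$d$ edges in each part completed by a single $0$-matching edge. That requires a matching of size $(k-1)/2$ inside $\Cay(\Z_k,\{\pm d\})$, i.e.\ one missing exactly one vertex. But when $g:=\gcd(d,k)>1$ this Cayley graph is a disjoint union of $g$ odd cycles, so every matching misses at least $g$ vertices and no matching of size $(k-1)/2$ exists. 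For the same reason your proposed final $1$-factor, which is meant to absorb ``one leftover diff-$d$ edge per part,'' cannot be assembled: you cannot leave only one difference-$d$ edge uncovered in each part. Since the lemma is stated for arbitrary odd $k$ and arbitrary $D\subseteq[1,\ell]$, the case $\gcd(d,k)>1$ genuinely occurs and must be handled.

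In short: both halves of your write-up (the abandoned induction and the direct sketch) have unfilled gaps precisely at the point where some $d\in D$ fails to be coprime to $k$. You need either to carry out the borrowing-and-compensating scheme you allude to in full detail, or to give a construction that does not implicitly assume each $\Cay(\Z_k,\{\pm d\})$ is a single cycle.
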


\begin{rem}\label{rem1}
  Considering the permutation $f$ of $\Z_{k}\times \{0,1\}$ such that $f(i, j) = (i, 1-j)$, and
  a graph $\G=\big\langle D_0, D_1, D_2\big\rangle$, we have that $f(\G)=\big\langle D_2, -D_1, D_0\big\rangle$. Therefore,
  Lemmas \ref{ab, 0, 0} --  \ref{HLR:1factorization} 
  continue to hold when we replace $\G$ by $f(\G)$.
\end{rem}

\section{$k$-sun systems of $K_{4k}+n$}

In this section we provide sufficient conditions for a $k$-sun system of
$K_{4k}+n$ to exist, when $n\equiv 0,1 \pmod{4}$. More precisely, we show the following.
\begin{thm}\label{thm:hole}
  Let $k\geq7$ be an odd integer and let $n\equiv 0,1\pmod4$ with $2k< n<10k$, then  there exists a
  $k$-sun system of $K_{4k}+n$, except possibly when
  \begin{itemize}
    \item $k=7$ and $n=20,21,32,33,44,45,56,57,64,65,68,69$,
    \item $k=11$ and $n=100,101,112,113$.
  \end{itemize}
\end{thm}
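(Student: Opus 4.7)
The plan is to adapt the HLR-style difference method of Section~2 from $k$-cycle decompositions to $k$-sun decompositions. Taking $G=\Z_k$ and $m=4$, we identify $V(K_{4k})=\Z_k\times[0,3]$ and label the additional vertices $\infty_1,\ldots,\infty_n$. The goal is to exhibit a family $\cF$ of base $k$-suns whose $\Z_k$-orbits partition $E(K_{4k}+n)$; by Corollary~\ref{mixed diff method for Kmn}, this reduces to checking that (i) the differences $\Delta_{ij}\cF$ equal $\Z_k\setminus\{0\}$ for $i=j$ and $\Z_k$ for $i<j$, and (ii) each infinity meets each layer in exactly one vertex across $\cF$.

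The construction treats $K_{4k}+n$ as the edge-disjoint union of two ``halves'' and a ``cross'' part. The \emph{upper half} is a spanning subgraph of $K_{2k}+w_U$ on $\Z_k\times\{0,1\}$ together with a chosen subset $W_U$ of the infinities; the \emph{lower half} is analogously defined on $\Z_k\times\{2,3\}$ with $W_L$; and the \emph{cross part} $X$ collects the bipartite edges between the two halves, the edges from $W_U$ to the lower half and from $W_L$ to the upper half, and all edges incident to the remaining infinities $W_*$. Using Lemmas~\ref{ab, 0, 0}--\ref{HLR:1factorization} we build $k$-cycle systems of the two halves whose base cycles $C_1,\ldots,C_r$ each have controllable ``pendant attachment'' edges, provided by the \emph{furthermore} clauses of Lemmas~\ref{+r} and~\ref{+l}. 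For each base cycle $C_i=(x_1,\ldots,x_k)$ we then attach pendants $x_j\mapsto x_j'$ lying in the opposite half or in $W_*$, producing a base $k$-sun $F_i\in\cF$ whose pendant differences lie in $D_{02}\cup D_{03}\cup D_{12}\cup D_{13}$ together with the appropriate incidences to $W_*$.

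The crux of the argument is designing the assignment $x_j\mapsto x_j'$ so that, across all base suns, (a) each cross difference appears exactly once, (b) each $\infty_u\in W_*$ is adjacent to precisely one vertex of each layer, and (c) each base sun uses distinct vertices for its cycle and its pendants. This is executed case by case according to the residue of $n$ modulo a small integer (controlling the sizes $w_U,w_L,|W_*|$ and hence which Section~3 lemmas apply), which produces the bulk of the proof. The main obstacle is that as $n$ approaches the boundary of the admissible range $(2k,10k)$, the degrees of freedom in the pendant assignment shrink, and for small $k$ (notably $k=7,11$) there are a handful of $n$ values for which no valid assignment exists within this scheme; these appear as the listed exceptions. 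For all other admissible $n$, the family $\cF$ satisfies the hypotheses of Corollary~\ref{mixed diff method for Kmn} and its $\Z_k$-orbits yield the desired $k$-sun system of $K_{4k}+n$.
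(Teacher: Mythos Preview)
Your plan has a counting defect that cannot be repaired within the scheme you describe. If you decompose the upper half $K_{2k}+w_U$ (on $\Z_k\times\{0,1\}$) and the lower half $K_{2k}+w_L$ into $k$-cycles and turn each cycle into a sun by attaching $k$ pendants landing in the opposite half or in $W_*$, the two halves together contribute $(2k-1+2w_U)+(2k-1+2w_L)$ cycles, hence $k(4k-2)+2k(w_U+w_L)$ pendant edges. But your cross part has $4k^2+2k(w_U+w_L)+4k|W_*|$ edges. Equating forces $|W_*|=-\tfrac12$: you are exactly $2k$ edges short, independently of how $n$ is split among $W_U,W_L,W_*$. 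So condition~(a) can never be satisfied, and the sketch is not a proof outline but a description of why the naive approach fails.

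The paper identifies these $2k$ missing edges as the $1$-factor $I=\big\{\{(a,i),(a,i+2)\}:a\in\Z_k,\,i\in\{0,1\}\big\}$, via $K_{4k}=K_{2k}[2]\oplus I$. Your scheme (with the two halves mirror images under $x\mapsto\overline{x}$) is precisely the decomposition of $K_{2k}[2]$ into $\sigma(C)\oplus\overline{\sigma(C)}$ over a cycle system of $K_{2k}+w$ (Lemmas~\ref{C[2]} and~\ref{from k-C to k-S}), and that is only the first, easy step. The real work --- the content of Theorems~\ref{thm:holek=1}--\ref{thm:holek=3,b2} --- is handling the leftover $I+\nu$ for $\nu\in\{2,3\}$: one \emph{modifies} a handful of the base suns by replacing selected vertices with $\infty'_1,\infty'_2,\infty'_\nu$, and then adds $2\nu+1$ further $\Z_k$-invariant suns of the form $Dev(x\sim y\sim z)$ to absorb $I$ together with the edges displaced by the modification. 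This is where the ``furthermore'' clauses of Lemmas~\ref{+r} and~\ref{+l} are actually used --- they guarantee that the relevant $Dev(\{x,y\})$ are $k$-cycles, so that each $Dev(x\sim y\sim z)$ really is a $k$-sun. The listed exceptions do not arise from a shortage of pendant assignments near the boundary of $(2k,10k)$; they occur because the case split (on $k\bmod 4$, on the parity of $\lfloor(n-4)/(k-1)\rfloor$, and on $n\bmod(k-1)$) requires intervals such as $[3,\ell-1]$ or $[1,k-2r-4]$ to be large enough for Lemmas~\ref{ab, 0, 0}--\ref{HLR:1factorization} to apply, and for $k\in\{7,11\}$ a few $(q,r)$ combinations fall outside these bounds.
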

To prove Theorem \ref{thm:hole}, we start by introducing some notions and prove some preliminary results.
Let $M$ be a positive integer and
take $V(K_{2^iM})=\Z_M\times [0,2^i-1]$ and
$V(K_{2^{i}M} + w) = V(K_{2^iM})\ \cup\ \big\{\infty_h\mid h\in\Z_w\big\}$,
for $i\in\{1,2\}$ and $w>0$.

Now assume that $w=2u$, and let $x\mapsto \ov{x}$ be the permutation of $V(K_{4M} + 2u)$ defined as follows:
\[
\ov{x} =
\begin{cases}
  (a, 2-j) & \text{if $x = (a,j)\in \Z_{M}\times \{0,2\}$,}\\
  (a, 4-j) & \text{if $x = (a,j)\in \Z_{M}\times \{1,3\}$,}\\
  \infty_{h+u} &   \text{if $x=\infty_h$}.
\end{cases}
\]
For any subgraph $\G$ of $K_{4M}+2u$,
we denote by $\overline{\G}$ the graph (isomorphic to $\G$)
obtained by replacing each vertex $x$ of $\G$ with $\overline{x}$.

Given a subgraph $\G$ of $K_{2M}+u$, we denote by $\G[2]$ the spanning subgraph of $K_{4M}+2u$
whose edge set is
\[
E(\G[2]) = \big\{\{x, y\}, \{x, \ov{y}\}, \{\ov{x}, y\}, \{\ov{x}, \ov{y}\} \mid \{x,y\}\in E(\G) \big\},
\]
and let $\G^*[2] = \G[2] \oplus I$  be the graph obtained by adding to $\G[2]$
the \emph{$1$-factor}
\[I=\big\{ \{x, \ov{x}\} \mid x\in\Z_{M}\times\{0,1\}\big\}.
\]
Note that, up to isolated vertices, $\G[2]$ is the \emph{lexicographic product} of
$\Gamma$ with the empty graph on two vertices.

The proof of the following elementary lemma is left to the reader.

\begin{lem}\label{trivial}
Let $\G=\oplus_{i=1}^n \G_{i}$ and let $w=\sum_{i=1}^n w_i$ with $w_i\geq 0$.
If $\G$ and the $\G_i$s have the same vertex set (possibly with isolated vertices), then
\begin{enumerate}
 \item  $\G + w  = \oplus_{i=1}^n (\G_i + w_i)$;
 \item  $\G[2] = \oplus_{i=1}^n \G_i[2]$;
 \item  $(\G+w)[2] = \G[2] + 2w$.
\end{enumerate}
\end{lem}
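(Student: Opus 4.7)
The three identities are bookkeeping statements describing how the operations $+w$ and $[2]$ interact with an edge-disjoint decomposition, and my plan is simply to verify each by matching edge-sets directly.

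For (1), I would partition $\Z_w=\bigsqcup_{i=1}^n B_i$ with $|B_i|=w_i$ and relabel the $w_i$ infinity vertices of $\G_i+w_i$ as $\{\infty_h\mid h\in B_i\}$. With this identification both sides live on the vertex set $V(\G)\cup\{\infty_h\mid h\in\Z_w\}$: the hypothesis $\G=\oplus_i\G_i$ distributes the edges of $\G$ uniquely among the $\G_i+w_i$, and each edge $\{v,\infty_h\}$ of $\G+w$ is placed in the unique summand whose block $B_i$ contains $h$ (valid because $V(\G_i)=V(\G)$).

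For (2), the crucial observation is that since $\G\subseteq K_{2M}+u$, every vertex of $\G$ lies in one ``half'' of $K_{4M}+2u$ while every vertex of $\ov{\G}$ lies in the complementary half. Hence, for each edge $\{x,y\}\in E(\G)$, the four expansion edges $\{x,y\}$, $\{x,\ov y\}$, $\{\ov x,y\}$, $\{\ov x,\ov y\}$ are pairwise distinct (one inside the first half, two bipartite, one inside the second half), and expansions coming from distinct edges of $\G$ are pairwise edge-disjoint. Thus $\G\mapsto\G[2]$ is additive on edge-disjoint unions, and (2) is immediate from $E(\G)=\bigsqcup_i E(\G_i)$.

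For (3), I would split the edges of $\G+w$ into those inside $\G$ and those of the form $\{v,\infty_h\}$ with $v\in V(\G)$ and $h\in\Z_w$. The first class contributes $E(\G[2])$ by the argument above. For the second, the subtle point is that when one applies $[2]$ to $\G+w$, the ``$u$''-parameter from the setup is now $w$, so $\ov{\infty_h}=\infty_{h+w}$ and the expansion of $\{v,\infty_h\}$ is $\{v,\infty_h\}$, $\{v,\infty_{h+w}\}$, $\{\ov v,\infty_h\}$, $\{\ov v,\infty_{h+w}\}$. As $h$ runs over $\Z_w$, the pair $\{h,h+w\}$ sweeps all of $\Z_{2w}$, so the union of these expansions is exactly the set of all edges between $V(\G[2])=V(\G)\cup\ov{V(\G)}$ and the $2w$ doubled infinities, which is precisely the ``$+2w$'' bipartite part of $\G[2]+2w$.

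The only point at which one might slip is in (3), where it is essential to notice that the bar operation applied to $\G+w$ doubles the infinity parameter from $w$ to $2w$; granted this, the bipartite contributions on the two sides match automatically. Beyond that bookkeeping step, everything reduces to a routine edge-by-edge check, consistent with the authors' description of the lemma as elementary.
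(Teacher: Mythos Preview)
The paper does not give a proof of this lemma; it explicitly says ``The proof of the following elementary lemma is left to the reader.'' Your proposal is correct and supplies precisely the routine edge-by-edge verification the authors defer, including the one genuinely delicate observation in (3) that applying $[2]$ to $\G+w$ forces the infinity parameter to be $u=w$, so that $\ov{\infty_h}=\infty_{h+w}$ and the bipartite edges match up with the $+2w$ side.
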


We start showing that if $C$ is a $k$-cycle, then $C[2]$ decomposes into two $k$-suns.

\begin{lem}\label{C[2]}
Let $C=(c_1, c_2, \ldots, c_k)$ be a cycle with vertices in
$\big(\Z_M \times\{0,1\}\big) \ \cup \ \{\infty_h \mid h\in \Z_{u}\}$ and let $S$ be the $k$-sun defined as follows:
\begin{equation}\label{fromCtoS}
  S=\left(
  \begin{array}{cccc}
    s_1      & \ldots & s_{k-1}      & s_k \\
    \ov{s_2} & \ldots & \ov{s_{k}}   & \ov{s_1}
  \end{array}
  \right)
\end{equation}
where $s_i \in \{c_i, \overline{c_i}\}$ for every $i\in[1,k]$.
Then $C[2] = S \oplus \overline{S}$.
\end{lem}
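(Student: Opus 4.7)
\emph{Plan.} I would prove $C[2]=S\oplus\overline{S}$ by checking the two equivalent conditions $E(S)\cap E(\overline{S})=\varnothing$ and $E(S)\cup E(\overline{S})=E(C[2])$; this is enough because both graphs are spanning subgraphs of $K_{4M}+2u$ on the same vertex set (up to isolated vertices). The driving observation is that the involution $x\mapsto\overline{x}$ exchanges two disjoint ``halves'' of $V(K_{4M}+2u)$, with the vertices of $C$ all lying in one of them; consequently $x\neq\overline{x}$ for every vertex, and for any two vertices $c_i,c_j$ of $C$ one has $c_i\neq\overline{c_j}$, so the two-element sets $\{c_i,\overline{c_i}\}$ and $\{c_j,\overline{c_j}\}$ are disjoint whenever $i\neq j$.

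Reading the matrix in \eqref{fromCtoS}, the edges of $S$ are the cycle edges $\{s_i,s_{i+1}\}$ and the pendant edges $\{s_i,\overline{s_{i+1}}\}$ for $i\in[1,k]$ (indices mod $k$). Applying the bar and using its involutivity, $\overline{S}$ has cycle edges $\{\overline{s_i},\overline{s_{i+1}}\}$ and pendant edges $\{\overline{s_i},s_{i+1}\}$. So at each index $i$, the union $E(S)\cup E(\overline{S})$ supplies precisely the four edges
\[
\{s_i,s_{i+1}\},\ \{s_i,\overline{s_{i+1}}\},\ \{\overline{s_i},s_{i+1}\},\ \{\overline{s_i},\overline{s_{i+1}}\}.
\]
Since $s_i\in\{c_i,\overline{c_i}\}$ gives $\{s_i,\overline{s_i}\}=\{c_i,\overline{c_i}\}$ as sets, these four edges coincide, as a set, with the four edges that the single edge $\{c_i,c_{i+1}\}$ of $C$ generates in $C[2]$ by the definition of $\G[2]$.

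What remains is a distinctness check, which I expect to be the only real (if mild) obstacle. For fixed $i$, the four edges above are pairwise distinct because $s_i\neq\overline{s_i}$ and $s_{i+1}\neq\overline{s_{i+1}}$. For $i\neq j$, the two four-edge blocks in $C[2]$ share no edge: any common edge would need its endpoints in $\{c_i,\overline{c_i},c_{i+1},\overline{c_{i+1}}\}\cap\{c_j,\overline{c_j},c_{j+1},\overline{c_{j+1}}\}$, but the ``different halves'' observation together with the distinctness of the $c_i$ reduces this intersection to at most a single vertex, too small to support one of the edges of the block pattern. Once this is verified, $|E(S)\cup E(\overline{S})|=4k=|E(S)|+|E(\overline{S})|=|E(C[2])|$ simultaneously forces $E(S)\cup E(\overline{S})=E(C[2])$ and $E(S)\cap E(\overline{S})=\varnothing$, which is what we want. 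A brief preliminary sanity check that $S$ really is a genuine $k$-sun---i.e.\ that its $2k$ matrix entries are pairwise distinct---follows from the same half-exchange argument applied to the distinctness of $c_1,\dots,c_k$.
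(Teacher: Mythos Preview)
Your approach is essentially the paper's: list the edges of $S$ and of $\overline{S}$ and observe that at each index $i$ they give precisely the four edges $\{c_i,c_{i+1}\},\{c_i,\overline{c_{i+1}}\},\{\overline{c_i},c_{i+1}\},\{\overline{c_i},\overline{c_{i+1}}\}$ defining $C[2]$. One small slip: the vertex intersection you examine can have two elements when $j=i\pm1$, not ``at most a single vertex''; the edge-disjointness still holds because an edge of the $i$-block has one endpoint in $\{c_i,\overline{c_i}\}$ and the other in $\{c_{i+1},\overline{c_{i+1}}\}$, and these pairs are pairwise disjoint for distinct indices, so matching both endpoints forces $\{i,i+1\}=\{j,j+1\}$ and hence $i=j$ (since $k\geq3$).
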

\begin{proof}
It is enough to notice that $S$ contains the edges
$\{s_{i}, s_{i+1}\}$ and $\{s_{i}, \ov{s_{i+1}}\}$,
while $\ov{S}$ contains $\{\ov{s_{i}}, \ov{s_{i+1}}\}$ and $\{\ov{s_{i}}, s_{i+1}\}$, for every $i\in[1,k]$, where $s_{k+1}=s_1$ and $\ov{s_{k+1}} = \ov{s_1}$.
\end{proof}

\begin{ex}
  In Figure \ref{figura} we have the graph $C_7[2]$ which can be decomposed into two $7$-suns $S$ and $\ov{S}$.
The non-dashed edges are those of $S$, while the dashed edges are those of $\ov{S}$.
  \begin{figure}[ht]
    \centering
    \includegraphics[scale=0.17]{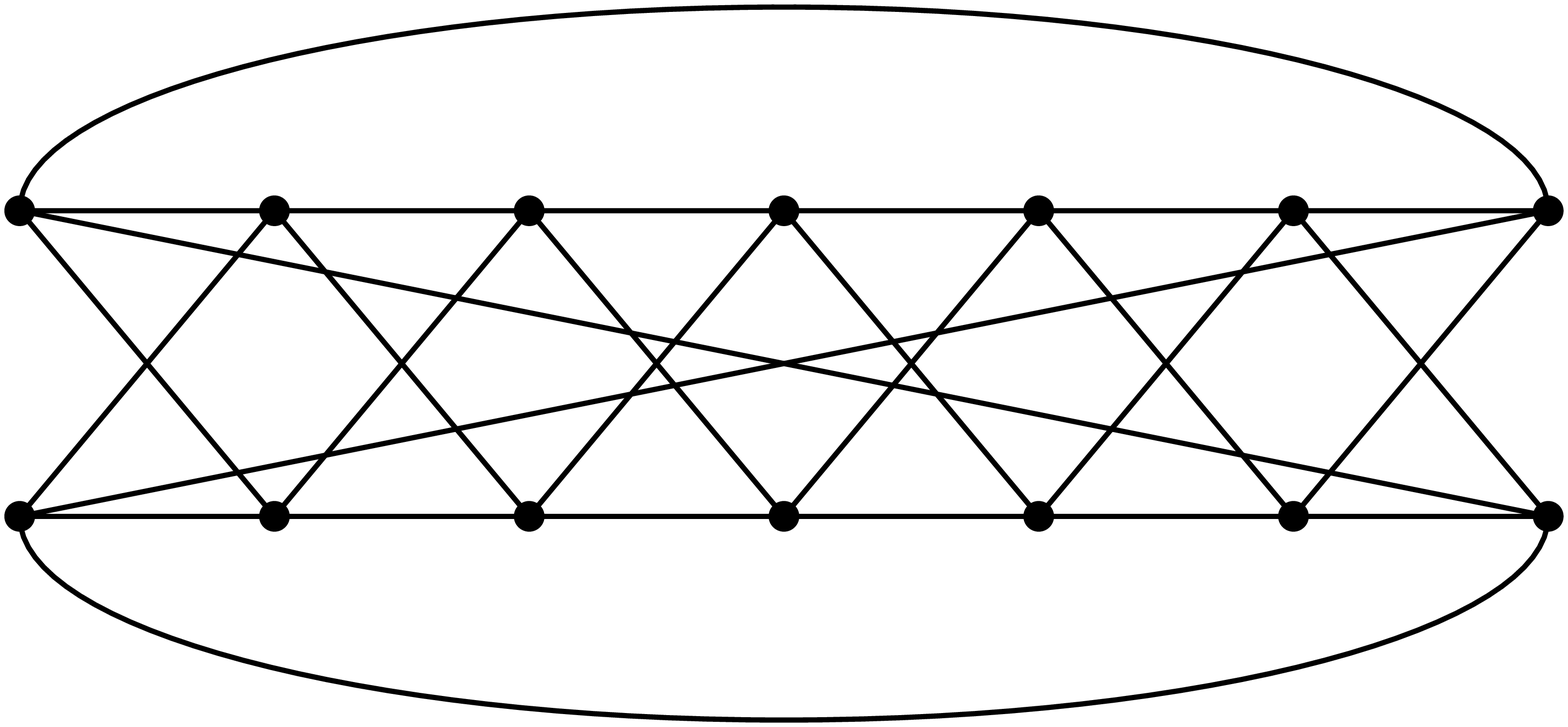}
    \hspace{1cm}
       \includegraphics[scale=0.17]{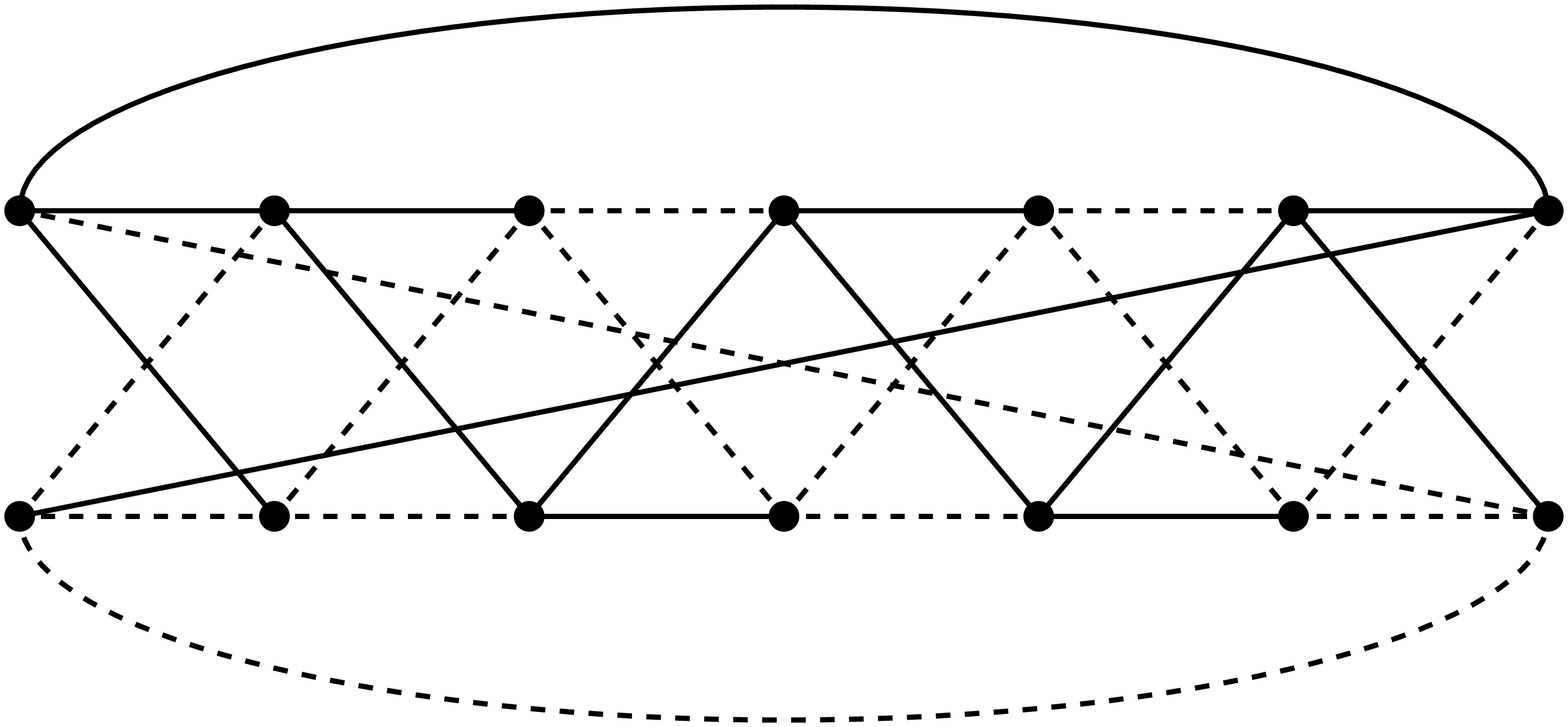}
    \caption{$C_7[2]=S\oplus \ov{S}$}
    \label{figura}
  \end{figure}
\end{ex}

For every cycle $C= (c_1, c_2, \ldots, c_{k})$ with vertices in $\Z_M \times\{0,1\}$, we set
\[
  \sigma(C)=
  \left(
  \begin{array}{cccc}
    c_1      & \ldots & c_{k-1}      & c_k \\
    \ov{c_2} & \ldots & \ov{c_{k}}   & \ov{c_1}
  \end{array}
  \right).
\]
Clearly, $C[2] = \sigma(C) \oplus \ov{\sigma(C)}$ by Lemma \ref{C[2]}.

\begin{lem}\label{from k-C to k-S}
If $\cC=\{C_1, C_2, \ldots, C_t\}$ is a $k$-cycle system of $\G+u$, where $\G$ is a subgraph of
$K_{2M}$, and
$S_i$ is a $k$-sun obtained from $C_i$ as in Lemma \ref{C[2]}, then
$\cS = \big\{S_i, \ov{S_i}\mid i\in[1,t]\big\}$ is a $k$-sun~system of $\G[2]+2u$. In particular,
if $\cC = Orb\big(C_1\big)$, then
$Orb(S_1) \ \cup\ Orb\big(\ov{S_1}\big)$
is a $k$-sun system of $\G[2]+2u$.
\end{lem}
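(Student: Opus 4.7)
The plan is to combine Lemma~\ref{trivial} with Lemma~\ref{C[2]}. Starting from $\Gamma+u = \oplus_{i=1}^t C_i$, part~(2) of Lemma~\ref{trivial} yields $(\Gamma+u)[2] = \oplus_{i=1}^t C_i[2]$, while part~(3) rewrites the left-hand side as $\Gamma[2] + 2u$. Lemma~\ref{C[2]} then splits each $C_i[2]$ as $S_i \oplus \overline{S_i}$, so altogether $\Gamma[2] + 2u = \oplus_{i=1}^t (S_i \oplus \overline{S_i})$, which is precisely the statement that $\mathcal{S} = \{S_i, \overline{S_i}\mid i\in[1,t]\}$ is a $k$-sun decomposition of $\Gamma[2] + 2u$.

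For the ``in particular'' claim, the additional ingredient is that the translation $\tau_g$ commutes with the overline involution: inspecting the four-case definition of $x \mapsto \overline{x}$ shows that $\tau_g(\overline{x}) = \overline{\tau_g(x)}$ for every vertex $x$. Hence, if $S_1$ is produced from $C_1 = (c_1, \ldots, c_k)$ via the recipe \eqref{fromCtoS} with choices $s_i \in \{c_i, \overline{c_i}\}$, then $\tau_g(S_1)$ is produced from $\tau_g(C_1)$ via the same recipe with choices $\tau_g(s_i) \in \{\tau_g(c_i), \overline{\tau_g(c_i)}\}$, and moreover $\overline{\tau_g(S_1)} = \tau_g(\overline{S_1})$. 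When $\mathcal{C} = Orb(C_1)$, this identification turns the family $\{S_i, \overline{S_i}\mid i\in[1,t]\}$ into exactly $Orb(S_1)\cup Orb(\overline{S_1})$, which gives the second assertion.

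There is no real obstacle here: the statement is essentially a bookkeeping corollary of Lemmas~\ref{trivial} and \ref{C[2]}. The only minor point worth double-checking in the ``in particular'' part is that $Orb(S_1)$ and $Orb(\overline{S_1})$ account together for the correct total number of suns, but this is forced by the edge-disjointness of the resulting decomposition: any coincidence would make the total edge count strictly less than $|E(\Gamma[2] + 2u)|$.
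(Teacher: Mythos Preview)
Your proof is correct and follows essentially the same route as the paper's: both use Lemma~\ref{trivial} (parts (2) and (3)) to get $\Gamma[2]+2u=\oplus_i C_i[2]$ and then Lemma~\ref{C[2]} to split each $C_i[2]$ into $S_i\oplus\overline{S_i}$; for the orbit statement the paper simply notes that $C_i[2]=\tau_g(C_1[2])=\tau_g(S_1)\oplus\tau_g(\overline{S_1})$, which is your commutation observation in compressed form. Your extra remarks on $\tau_g$ commuting with $x\mapsto\overline{x}$ and on the edge count are sound but not strictly needed.
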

\begin{proof}
By assumption $\Gamma + u = \oplus_{i=1}^t C_i$, where each $C_i$ is a $k$-cycle. Also, by Lemma \ref{trivial},
we have that $\G[2] + 2u = (\G + u)[2] = \oplus_{i=1}^t C_i[2]$. Since $C_i[2] = S_i \oplus \ov{S_i}$ by Lemma
\ref{C[2]}, then $\cS$ is a $k$-sun system of $\G[2]+2u$.

The second part easily follows by noticing that
if $C_i = \tau_g(C_1)$ for some $g\in\Z_M$, then
$C_i[2] = \tau_g(C_1[2]) = \tau_g\big(S_1\big) \oplus \tau_g\big(\ov{S_1}\big)$.
\end{proof}

The following lemma describes the general method we use to construct $k$-sun systems of $K_{4k}+n$.
We point out that throughout the rest of this section we take
$V(K_{2k}) = \Z_{k}\times \{0,1\}$ and $V(K_{4k}) = \Z_{k}\times [0,3]$.

\begin{lem}\label{lemma:main}
Let $K_{2k}= \G_1\oplus \G_2$ with $V(\G_1) = V(\G_2) = V(K_{2k})$.
If $\G_1 + w_1$ has a $k$-cycle system and $\G_2^*[2] + w_2$ has a $k$-sun system,
then $K_{4k} + (2w_1+w_2)$ has  a $k$-sun system.
\end{lem}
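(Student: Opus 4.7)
The plan is to exhibit a decomposition of $K_{4k}+(2w_1+w_2)$ into two spanning subgraphs, each of which admits a $k$-sun system either by the hypotheses of the lemma or by the doubling machinery already developed in this section.

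The starting observation is the structural identity $K_{4k}=K_{2k}^*[2]$, that is,
\[
K_{4k} \;=\; K_{2k}[2]\oplus I,\qquad I=\bigl\{\{x,\ov{x}\}\mid x\in V(K_{2k})\bigr\}.
\]
This is immediate from the definition of the $[2]$ operation: identifying $V(K_{4k})$ with $V(K_{2k})\cup\{\ov{x}\mid x\in V(K_{2k})\}$, each unordered pair $\{x,y\}\in E(K_{2k})$ contributes the four edges $\{x,y\}, \{x,\ov{y}\}, \{\ov{x},y\}, \{\ov{x},\ov{y}\}$ to $K_{2k}[2]$, and the only edges of $K_{4k}$ missed in this way are precisely those of $I$. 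Combining this identity with the hypothesis $K_{2k}=\G_1\oplus\G_2$ and Lemma \ref{trivial}(2) yields
\[
K_{4k} \;=\; \G_1[2]\oplus \G_2[2]\oplus I \;=\; \G_1[2]\oplus \G_2^*[2],
\]
after which Lemma \ref{trivial}(1) allows us to distribute the $2w_1+w_2$ infinity vertices between the two summands as
\[
K_{4k}+(2w_1+w_2) \;=\; \bigl(\G_1[2]+2w_1\bigr)\oplus\bigl(\G_2^*[2]+w_2\bigr).
\]

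It remains to exhibit a $k$-sun system of each summand. For the first summand, Lemma \ref{trivial}(3) identifies it with $(\G_1+w_1)[2]$; since by hypothesis $\G_1+w_1$ carries a $k$-cycle system, Lemma \ref{from k-C to k-S} converts this cycle system into a $k$-sun system of $\G_1[2]+2w_1$ (concretely, each $k$-cycle $C$ is replaced by the pair $\sigma(C), \ov{\sigma(C)}$). The second summand $\G_2^*[2]+w_2$ has a $k$-sun system by hypothesis. The union of these two $k$-sun systems is the desired $k$-sun system of $K_{4k}+(2w_1+w_2)$.

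The argument is essentially an assembly: once the structural identity $K_{4k}=K_{2k}^*[2]$ is noted, everything reduces to straightforward applications of Lemma \ref{trivial} and Lemma \ref{from k-C to k-S}. I do not anticipate any genuine obstacle at this stage; the real work will lie in constructing suitable complementary pairs $(\G_1,\G_2)$ equipped with the required cycle and sun systems, which is the content of the subsequent sections.
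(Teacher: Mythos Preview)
Your proof is correct and follows essentially the same route as the paper: both start from the identity $K_{4k}=K_{2k}[2]\oplus I$, split via $K_{2k}=\G_1\oplus\G_2$ and Lemma~\ref{trivial} into $(\G_1+w_1)[2]\oplus(\G_2^*[2]+w_2)$, and then invoke Lemma~\ref{from k-C to k-S} on the first summand. Your write-up is in fact slightly more explicit about which parts of Lemma~\ref{trivial} are being used at each step.
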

\begin{proof} The result follows by Lemma \ref{trivial}. In fact, noting that
$K_{4k} = K_{2k}[2] \oplus I$, where $I=\big\{ \{z, \ov{z}\} \mid z\in\Z_{k}\times \{0,1\}  \big\}$, we have that
\begin{align*}
  K_{4k} + (2w_1 + w_2) &= \big(\G_1[2] \oplus (\G_2[2] \oplus I)\big) + 2w_1 + w_2  \\
                              &= (\G_1[2] + 2w_1) \oplus (\G_2^*[2] + w_2) = (\G_1+ w_1)[2] \oplus (\G_2^*[2] + w_2).
\end{align*}
The result then follows by Lemma \ref{from k-C to k-S}.
\end{proof}

We are now ready to prove the main result of this section, Theorem \ref{thm:hole}.
The case $k\equiv 1 \pmod{4}$ is proven in Theorem \ref{thm:holek=1}, while the
case $k\equiv 3 \pmod{4}$ is dealt with in Theorems
\ref{thm:holek=3}, \ref{thm:holek=3,a}, \ref{thm:holek=3, b1} and \ref{thm:holek=3,b2}.

\begin{thm}\label{thm:holek=1}
If $k\equiv 1\pmod 4\geq 9$ and $n\equiv 0,1 \pmod 4$ with  $2k < n< 10k$, then there exists a
$k$-sun system of $K_{4k}+n$.
\end{thm}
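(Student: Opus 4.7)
The proof relies on Lemma \ref{lemma:main}: for each admissible $n$, I aim to exhibit a decomposition $K_{2k}=\Gamma_1\oplus\Gamma_2$ and non-negative integers $w_1,w_2$ with $n=2w_1+w_2$ such that $\Gamma_1+w_1$ admits a $k$-cycle system and $\Gamma_2^*[2]+w_2$ admits a $k$-sun system. Writing $K_{2k}=\langle[1,\ell],\Z_k,[1,\ell]\rangle$ in difference notation, the task reduces to distributing the three difference sets between $\Gamma_1$ and $\Gamma_2$. The feature specific to $k\equiv 1\pmod 4$ is that $\ell=(k-1)/2$ is even, which makes Lemma \ref{+l}(2) applicable to any block $\langle\{d\},\varnothing,\varnothing\rangle$ with $d$ coprime to $k$, providing a $k$-cycle system of $\langle\{d\},\varnothing,\varnothing\rangle+\ell$.

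For the first clause I would take $\Gamma_1$ as an edge-disjoint union of three types of blocks: (i) possibly several copies of $\langle\{d_j\},\varnothing,\varnothing\rangle$ with distinct $d_j\in[1,\ell]$ coprime to $k$, each absorbing $\ell$ infinities via Lemma \ref{+l}(2); (ii) a block $\langle[1+\epsilon,s+\epsilon],D,[1+\epsilon,s'+\epsilon]\rangle+r$ absorbing $r$ further infinities via Lemma \ref{0, D, 0}, with $s,s',\epsilon,D$ chosen to meet both the parity condition $r\not\equiv s+s'\pmod 2$ and the size constraint on $D$; and (iii) a residual block $\langle[a,b],\varnothing,[c,c']\rangle$ soaking up leftover $D_{00}$- and $D_{11}$-differences via Lemma \ref{ab, 0, 0}. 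If $p$ denotes the number of blocks of type (i), then $w_1=p\ell+r$, and ranging $p,r$ over their admissible values should cover every $w_1$ needed as $n$ varies in $(2k,10k)$.

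The main obstacle is the second clause: constructing a $k$-sun system of $\Gamma_2^*[2]+w_2$. Since $\Gamma_2^*[2]=\Gamma_2[2]\oplus I$ and the 1-factor $I$ is not itself of the form $\Gamma'[2]$, Lemma \ref{from k-C to k-S} does not apply directly. My strategy is to generate, via Proposition \ref{mixed diff method}, base $k$-suns whose pendants are precisely the edges of $I$: for each $i\in\{0,1\}$, a $k$-cycle on vertices of $\Z_k\times\{i\}$ (using a suitable difference from $D_{ii}$) augmented with the pendants $\ov{(a,i)}=(a,i+2)$ is a $k$-sun whose $k$ pendant edges form the half of $I$ between $\Z_k\times\{i\}$ and $\Z_k\times\{i+2\}$; the two such suns together absorb all of $I$. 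The remaining edges of $\Gamma_2^*[2]+w_2$ would then be handled by further base $k$-suns, combining Lemma \ref{from k-C to k-S} applied to an auxiliary $k$-cycle system of some $\Gamma_2'+u'$ with additional direct constructions via Proposition \ref{mixed diff method} to cover any residual asymmetric pieces. Verifying that all these ingredients fit together for every $n\equiv 0,1\pmod 4$ in $(2k,10k)$ is the crux of the argument, and will likely require splitting into several subcases indexed by $n\bmod{2k}$.
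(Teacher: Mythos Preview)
Your framework via Lemma \ref{lemma:main} is exactly what the paper uses, and your idea of absorbing the $1$-factor $I$ through pendant edges is also right --- in fact the paper's auxiliary suns $G_1=Dev(x_1\sim x_2\sim\ov{x_2})$ and $G_4=Dev(y_1\sim y_2\sim\ov{y_2})$ are precisely the ``pendant-$I$'' suns you describe. But there is a genuine gap in how you propose to reach them.

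If you build those two suns \emph{first}, using differences $d_0\in D_{00}(\Gamma_2)$ and $d_1\in D_{11}(\Gamma_2)$ for their cycles, then what remains of $\Gamma_2^*[2]+w_2$ is no longer of the form $\Gamma'[2]+2u$: you have consumed only the edges $\{(a,i),(a+d_i,i)\}$ out of the four edge-types that $d_i$ contributes to $\Gamma_2[2]$, and Lemma \ref{from k-C to k-S} gives you no handle on such an asymmetric remainder. Your appeal to ``additional direct constructions via Proposition \ref{mixed diff method}'' for this leftover is where all the difficulty hides, and the proposal offers no mechanism for it. A second, related problem is that when $n\equiv 1\pmod 4$ the integer $w_2=n-2w_1$ is forced to be odd, while Lemma \ref{from k-C to k-S} only ever produces systems of $\Gamma[2]+2u$; your sketch does not say how a single extra infinity gets absorbed.

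The paper resolves both issues at once by reversing the order of operations. It first uses Lemmas \ref{+r} and \ref{+l} to build base $k$-cycles $A$, $B$, $F_j$ whose orbits decompose $\Omega_2+(2\ell+r)$, chosen so that specific edges of $A$ and $B$ develop to $k$-cycles lying entirely in $\Z_k\times\{0\}$ or $\Z_k\times\{1\}$ (this is precisely what conditions (1)--(2) of Lemma \ref{+r} provide). Applying Lemma \ref{from k-C to k-S} then yields a $k$-sun system $\cS\cup\cS'$ of $\Omega_2[2]+2(2\ell+r)$, still not touching $I$ or the last $\nu\in\{2,3\}$ infinities. Now comes the key move: in each base sun $S_i$, a few carefully selected vertices are \emph{replaced} by $\infty'_1,\infty'_2,\infty'_\nu$ to produce $T_i$ (Table \ref{Table}). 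The orbits of the $T_i$ now cover every edge incident with the new infinities, but leave behind a set of ``missing'' edges --- the developments of the edges destroyed by the substitution. The replacements are rigged so that these missing edges, \emph{together with $I$}, decompose into exactly $2\nu+1$ further $k$-suns $G_1,\dots,G_{2\nu+1}$, each of the shape $Dev(a\sim b\sim c)$ or $Dev(\{a,b\}\oplus\{c,d\})$. Your pendant-$I$ suns appear here as $G_1$ and $G_4$, but their cycle edges are supplied by the missing edges of the $T_i$ rather than carved directly out of $\Gamma_2[2]$, and that is what keeps the bookkeeping balanced and simultaneously handles the odd parity of $w_2$.
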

\begin{proof} Let $n = 2(q\ell + r) + \nu$ with $1\leq r\leq \ell$ and $\nu\in\{2,3\}$.
Note that  $\ell\geq 4$ is even and $r$ is odd, since $n\equiv 0,1 \pmod 4\geq9$ and $k\equiv 1\pmod 4$.
Considering also that $2k < n<10k$, we have that $2\leq q\leq 10\leq k+2r-1$. Furthermore,
let $V(K_{4k}+n) = \big(\Z_{k}\times[0,3]\big)\cup\{\infty_h\mid h\in \Z_{n-\nu}\}
\cup \{\infty'_1,\infty'_2, \infty'_\nu\}$.

We start decomposing $K_{2k}$ into the following two graphs:
\begin{align*}
  \G_1 = \big\langle [2,\ell], [k-2r-2, k-1], [2,\ell-1] \big\rangle \;\;\mbox{and}\;\;
  \G_2 = \big\langle \{1\}, [0,k-2r-3], \{1,\ell\} \big\rangle.
\end{align*}
We notice that $\G_1$ further decomposes into the following graphs:
\[
  \big\langle [2,\ell-1], \varnothing, \varnothing \big\rangle,  \hspace{.3cm}
  \big\langle \varnothing, \varnothing, [2,\ell-1] \big\rangle,   \hspace{.3cm}
  \big\langle  \{\ell\}, [k-2r-2, k-1], \varnothing \big\rangle,
\]
each of which decomposes into $k$-cycles by Lemmas \ref{ab, 0, 0} and \ref{l, S S+1, 0}; hence $\G_1$ has a $k$-cycle system
$\{C_1, C_2, \ldots, C_{\gamma}\}$, where $\gamma = k+2r-2$.
Note that this system is non-empty, since $1\leq q-1 \leq \gamma$.
Without loss of generality, we can assume that each cycle $C_i$ has order $2k$ and
\begin{align}
  & \label{thm:holek=1:cond1}
  \text{$C_1$ is a subgraph of $\big\langle [2,\ell-1], \varnothing, \varnothing \big\rangle$.}
\end{align}
Now set $\Omega_1 = \G_1\setminus C_1$ and $\Omega_2 = \G_2 \oplus C_1$.
Letting $w_1 = (q-2)\ell = \sum_{j=2}^{\gamma} w_{1,j}$, where
$w_{1,j} = \ell$ when $j <q$, and $w_{1,j} = 0$ otherwise,
by Lemma \ref{trivial} we have that $\Omega_1 + w_1 = \oplus_{i=2}^\gamma (C_i + w_{1,i})$.
Therefore, $\Omega_1 + w_1$ has a $k$-cycle system, since each $C_i + w_{1,i}$ decomposes into $k$-cycles
by Lemma \ref{+l}.
Setting $w_2 = n - 2w_1 = 2(2\ell+r) + \nu$ and considering that $K_{2k} = \G_1 \oplus \G_2 = \Omega_1 \oplus \Omega_2$,
by Lemma \ref{lemma:main} it is left to show that
$\Omega_2^*[2] + w_2$ has a $k$-sun system.

Set $\G_3=C_1$, and recall that $\Omega_2^*[2]= \Omega_2[2] \oplus I = \G_2[2] \oplus \G_3[2] \oplus I$,
where $I$ denotes the $1$-factor $\big\{ \{z, \ov{z}\} \mid z\in \Z_k \times\{0,1\}\big\}$ of $K_{4k}$.
Hence,
\begin{equation}\label{omega2}
\Omega_2^*[2] + w_2 = \big(\G_2 +(\ell+r)\big)[2] \oplus (\G_3 + \ell)[2] \oplus (I+\nu)
\end{equation}
by Lemma \ref{trivial}.
Clearly, $\G_2 = \G_{2,1} \oplus \G_{2,2}$ where
$\G_{2,1} = \big\langle \{1\}, [0, k-2r-3], \{1\} \big\rangle$ and
$\G_{2,2} = \big\langle \varnothing, \varnothing, \{\ell\}\big\rangle$,
hence $\G_2 +(\ell+r) = (\G_{2,1} + r) \oplus (\G_{2,2} + \ell)$.
By Lemmas \ref{+r} and \ref{+l}, there exist a $k$-cycle $A = (x_1, x_2, y_3, y_4, a_5, \ldots, a_k)$ of $\G_{2,1} + r$ and a $k$-cycle $B = (y_1, y_2, b_3, \ldots, b_k)$ of
$\G_{2,2} + \ell$ satisfying the following properties:
\begin{align}
    & \label{thm:holek=1:cond2}
    \text{$Orb(A)\ \cup\ Orb(B)$ is a $k$-cycle system of $\G_2 +(\ell+r)$;} \\
    & \label{thm:holek=1:cond3}
    \text{$Dev(\{x_1, x_2\})$ is a $k$-cycle with vertices in $\Z_{k}\times\{0\}$;}\\
    & \label{thm:holek=1:cond4}
    \text{$Dev(\{y_1, y_2\})$ and $Dev(\{y_3, y_4\})$ are $k$-cycles with vertices in $\Z_{k}\times\{1\}$.}
\end{align}
Furthermore, denoted by $(c_1, c_2, \ldots, c_k)$ the cycle in $\G_3$, Lemma \ref{+l} guarantees that
\begin{align*}
  \begin{split}
   & \text{$\G_3 + \ell$ has a $k$-cycle system $\{F_1, F_2, \ldots, F_{k}\}$ such that} \\
   & \text{$F_{j} = (c_j, c_{j+1}, f_{j,3}, f_{j,4}, \ldots, f_{j,k})$ for every $j\in[1,k]$ (with $c_{k+1}=c_1$).}
  \end{split}
\end{align*}
Let $\cS = \{S_1, S_2, S_3, S_4\}$
and $\cS' = \{ S_{3+2j}, S_{4+2j} \mid j\in[1,k]\}$, where
\begin{align*}
  & S_1 = \sigma(x_1, \ov{x_2}, y_3, y_4, a_5, \ldots, a_k),\;\;
    S_3 = \sigma(y_1, \ov{y_2}, b_3, \ldots, b_k),\\
  & S_{3+2j} = \sigma(c_j,\ov{c_{j+1}}, f_{j,3}, f_{j,4}, \ldots, f_{j,k}) \;\; \text{for $j\in[1,k]$},
    \;\; \text{and}\;\;\\
  & S_{2i} = \ov{S_{2i-1}}\;\; \text{for $i\in[1,k+2]$}.
\end{align*}
By Lemma \ref{from k-C to k-S} we have that
$\bigcup_{S\in \cS} Orb(S)$ is a $k$-sun system of
$\big(\G_2 +(\ell+r)\big)[2]$, and
$\cS'$ is a $k$-sun system of
$\big(\G_3 + \ell\big)[2]$.
It follows by \eqref{omega2} that $\bigcup_{S\in \cS} Orb(S)\ \cup\ \cS'$ decomposes $(\Omega_2^*[2] + w_2)\setminus (I+\nu)$.

To construct a $k$-sun system of $\Omega_2^*[2] + w_2$, we first modify the $k$-suns in $\cS\ \cup\ \cS'$
by replacing some of their vertices with $\infty'_1, \infty'_2$, and possibly $\infty'_3$ when $\nu=3$.
More precisely, following Table \ref{Table}, we obtain $T_i$ from $S_i$ by replacing the ordered set $V_i$ of vertices
of $S_i$ with $V_i'$. This yields a set $M_i$ of `missing' edges no longer covered by $T_i$ after this substitution,
but replaced by those in $N_i$, namely
\[
E(T_i) = \big(E(S_i)\setminus M_i\big) \ \cup\ N_i.
\]
We point out that  $T_{3+2j}=S_{3+2j}$,
and $T_{4+2j} = S_{4+2j}$ when $\nu=2$, for every $j\in[1,k]$. The remaining graphs $T_i$ are explicitly
given below, where the elements in bold are the replaced vertices.
\begingroup
\allowdisplaybreaks
\begin{align*}
T_1 &=
  \left(
  \begin{matrix}
    x_1            & \ov{x_2} & \bm{\infty'_2}   &     y_4   &     a_5   & \ldots & a_{k-1}    & a_k\\
    \bm{\infty'_1} & \ov{y_3} & \ov{y_4}         &  \ov{a_5} & \ov{a_6}  & \ldots & \ov{a_{k}} & \ov{x_{1}}
  \end{matrix}
  \right),\\
T_2 &=
\begin{cases}
  \left(
  \begin{matrix}
    \ov{x_1}       &  {x_2}         & \ov{y_3} &  \ov{y_4} & \ov{a_5} & \ldots & \ov{a_{k-1}} & \ov{a_{k}}\\
    \bm{\infty'_1} & \bm{\infty'_2} &  {y_4}   &   {a_5}   &     a_6  & \ldots & a_{k}        & x_1
  \end{matrix}
  \right) & \mbox{if $\nu=2$}, \\
  \left(
  \begin{matrix}
    \ov{x_1}  &  {x_2} & \bm{\infty'_3} &  \ov{y_4}  & \ov{a_5} & \ldots & \ov{a_{k-1}} & \ov{a_{k}}\\
    \bm{\infty'_1}    & \bm{\infty'_2} &  {y_4} &     {a_5}  &     a_6  & \ldots &     a_{k}    & x_1
  \end{matrix}
  \right) & \mbox{if $\nu=3$},
\end{cases}  \\
T_3 &=
  \left(
  \begin{matrix}
    y_1       & \ov{y_2} & b_3      & \ldots & b_{k-1}      &  b_k\\
    \bm{\infty'_1} & \ov{b_3} & \ov{b_4} & \ldots & \ov{b_{k}}   &  \ov{y_1}
  \end{matrix}
  \right),\;\;
T_4 =
  \left(
  \begin{matrix}
    \ov{y_1}    & y_2        & \ov{b_3} & \ldots & \ov{b_{k-1}} &  \ov{b_k}\\
    \bm{\infty'_1}   & b_3   & b_4      & \ldots & b_{k}        &  y_1
  \end{matrix}
  \right), \\
T_{4+2j} &=
  \left(
  \begin{matrix}
    \ov{c_j}     & c_{j+1} & \ov{f_{j,3}} & \ldots  & \ov{f_{j,k-1}} &  \ov{f_{j,k}} \\
    \bm{\infty'_3}    & f_{j,3} &     f_{j,4}  & \ldots  &       f_{j,k}  &  c_j
  \end{matrix}
  \right)\;\; \mbox{for every $j\in[1,k]$}.
\end{align*}
\endgroup
We notice that $\displaystyle
   \bigcup_{i=1}^{4} Dev(N_i) \cup \bigcup_{i=5}^{2k+4} N_i=
   \big\{ \{\infty'_j, x\} \mid j\in[1,\nu], x\in \Z_k\times [0,3] \big\}
$.
We finally build the following $2\nu +1$ graphs:
\[
\begin{array}{ll}
  G_1 =
  \begin{cases}
    Dev\big(x_1 \sim x_2 \sim \ov{x_2} \big) & \text{if $\nu=2$,} \\
    Dev\big(x_1 \sim x_2 \sim \ov{y_3} \big) & \text{if $\nu=3$,}
  \end{cases}
&
  G_2 = Dev\big( \ov{x_1} \sim \ov{x_2} \sim y_3 \big),
 \\[1ex] \rule{0pt}{1\normalbaselineskip}
  G_3 = Dev\big( y_4 \sim y_3 \sim x_2\big),
&
  G_4 = Dev\big({y_1} \sim {y_2} \sim \ov{y_2} \big),
\\[1ex] \rule{0pt}{1\normalbaselineskip}
  G_5 = Dev\big(\{\ov{y_1}, \ov{y_2}\} \oplus \{y_3, \ov{y_4}\} \big),
&
  G_6 = Dev\big( \ov{y_4} \sim \ov{y_3} \sim y_4 \big),
\\[1ex] \rule{0pt}{1\normalbaselineskip}
  G_7 =
  \left(
  \begin{matrix}
   \ov{c_1}     & \ov{c_{2}} & \ldots  & \ov{c_{k}}    \\
       c_1      &      c_{2} & \ldots  &      c_{k}
  \end{matrix}
  \right).
\end{array}
\]
By recalling \eqref{thm:holek=1:cond1} and
\eqref{thm:holek=1:cond2}--\eqref{thm:holek=1:cond4}, it is not difficult to check that
$G_1, G_2, \ldots, G_{2\nu+1}$ are $k$-suns. Furthermore,
\[
\bigcup_{i=1}^{2\nu+1}E(G_i) = \bigcup_{i=1}^{4} Dev(M_i)\ \cup\ \bigcup_{i=5}^{2k+4} M_i\ \cup\ E(I),
\]
where, we recall, $I$ denotes the $1$-factor $\big\{ \{z, \ov{z}\} \mid z\in \Z_k \times\{0,1\}\big\}$ of $K_{4k}$.
Therefore, $\bigcup_{i=1}^4 Orb(T_i)\ \cup\ \{T_5, T_6,\ldots, T_{2k+4}\} \ \cup\ \{G_1, G_2, \ldots, G_{2\nu+1}\}$
is a $k$-sun system of $\Omega^*_2[2] + w_2$, and this concludes the proof.
\end{proof}
\begin{table}[h]
  \centering
  {\renewcommand\arraystretch{1.4}
  \begin{tabular}{|p{0.8cm}|p{4.3cm}|p{2.5cm}|p{2.8cm}|p{0.4cm}|}
  \hline
  $\bm{i}$ & $\bm{V_i\rightarrow V'_i}$ & $\bm{M_i}$ & $\bm{N_i}$ & $\bm{\nu}$\\
      \hline
  $1$ & $(x_2,y_3)\rightarrow (\infty'_1,\infty'_2)$
      & $\{x_1,x_2\}, \{\ov{x_2},y_3\}$ $\{y_3,y_4\},\{y_3,\ov{y_4}\}$
      & $\{\infty'_1,x_1\}, \{\infty'_2,\ov{x_2}\}$ $\{\infty'_2,y_4\}, \{\infty'_2,\ov{y_4}\}$
      & {2,3} \\
      \hline
  $2$ & $(\ov{x_2},y_3)\rightarrow (\infty'_1,\infty'_2)$
      & $\{\ov{x_1},\ov{x_2}\},\{x_2,y_3\}$
      & $\{\infty'_1,\ov{x_1}\},\{\infty'_2,x_2\}$
      & 2 \\
    \hline
  $2$ & $(\ov{x_2},y_3,\ov{y_3})\rightarrow(\infty'_1,\infty'_2,\infty'_3)$
      & $\{\ov{x_1},\ov{x_2}\}, \{x_2,y_3\}$ $\{x_2,\ov{y_3}\}, \{\ov{y_3},\ov{y_4}\}$ $\{\ov{y_3},y_4\}$
      & $\{\infty'_1,\ov{x_1}\}, \{\infty'_2,x_2\}$  $\{\infty'_3,x_2\}, \{\infty'_3,\ov{y_4}\}$ $\{\infty'_3, y_4\}$
      & 3 \\
 \hline
  $3$ & $y_2\rightarrow \infty'_1$
      & $\{y_1,y_2\}$
      & $\{\infty'_1,y_1\}$
      & {2,3} \\
    \hline
  $4$ & $\ov{y_2}\rightarrow \infty'_1$
      & $\{\ov{y_1},\ov{y_2}\}$
      & $\{\infty'_1,\ov{y_1}\}$
      & {2,3} \\
    \hline
  $3+2j$ & $\varnothing$
         & $\varnothing$
         & $\varnothing$
         & 2,3 \\
    \hline
  $4+2j$ & $\varnothing$
         & $\varnothing$
         & $\varnothing$
         & {2} \\
    \hline
  $4+2j$ & $\ov{c_{j+1}}\rightarrow \infty'_3$
         & $\{\ov{c_j},\ov{c_{j+1}}\}$
         & $\{\infty'_3,\ov{c_j}\}$
         & {3} \\
    \hline
  \end{tabular}
  \vspace{0.4cm}
  }
  \caption{From $S_i$ to $T_i$.}\label{Table}
\end{table}

\begin{ex} By following the proof of Theorem \ref{thm:holek=1}, we construct a
$k$-sun system of $K_{4k} + n$ when $(k,n) = (9,21)$; hence
$(\ell, q, r, \nu) = (4, 2, 1, 3)$.

The graphs $\G_1 = \big\langle [2,4], [5, 8], [2,3] \big\rangle$ and
$\G_2 = \big\langle \{1\}, [0,4], \{1,4\} \big\rangle$ decompose the complete graph $K_{18}$
with vertex-set $\Z_{9} \times \{0,1\}$. Also $\Gamma_1$ decomposes into the following
$9$-cycles of order $18$, where $i=0,1$:
\begin{align*}
  C_{1+i} &= ((0,i), (2,i), (8,i), (1,i), (3,i), (5,i), (7,i), (4,i), (6,i)),\\
  C_{3+i} &= ((0,i), (3,i), (6,i), (8,i), (5,i), (2,i), (4,i), (1,i), (7,i)),\\
  C_{5+i} &= ((4i,0), (8+4i,1), (1+4i,0), (4i,1), (2+4i,0), (1+4i,1),  \\
          &\;\;\;\;\;\; (3+4i,0), (2+4i,1), (4+4i,0)), \\
  C_{7+i} &= ((8+4i,0), (5+4i,1), (4i,0), (6+4i,1), (1+4i,0), (7 + 4i,1),  \\
          &\;\;\;\;\;\; (2+4i,0), (8+4i,1), (3+4i,0)), \\
      C_9 &= ((7,0), (2,0), (6,0), (1,0), (5,0), (0,0), (7,1), (8,0), (4,1)).
\end{align*}
Clearly, $K_{18}= \Omega_1 \oplus \Omega_2$, where $\Omega_1 = \Gamma_1\setminus{C_1}$ and $\Omega_2=\Gamma_2\oplus C_1$.

Let $V(K_{36})=\Z_9\times [0,3]$, and
denote by $I$ the $1$-factor of $K_{36}$ containing all edges of the form $\{(a,i),(a,i+2)\}$,
with $a\in\Z_9$ and $i\in\{0,1\}$. Then,
\[
  K_{36} = K_{18}[2] \oplus I = \Omega_1[2] \oplus \Omega_2[2] \oplus I.
\]
Considering that $(\Omega_2 + 9)[2] = \Omega_2[2] + 18$, we have
\[
  K_{36} + 21 =  \Omega_1[2] \oplus (\Omega_2[2] + 18) \oplus (I + 3)
              =  \Omega_1[2] \oplus (\Omega_2 + 9)[2] \oplus (I + 3)   .
\]
Since the set $\{\sigma(C_i), \ov{\sigma(C_i)} \mid i\in[2,9]\}$ is a $9$-sun system of
$\Omega_1[2]$, it is left to build a $9$-sun system of $\Omega^*_2[2] + 21 = (\Omega_2[2] + 18) \oplus (I + 3)$.

We start by decomposing $\Omega_2 + 9$ into $9$-cycles. Since
$\Omega_2 = \G_{2,1} \oplus \G_{2,2} \oplus \G_3$ with
$\G_{2,1} = \big\langle \{1\}, [0,4], \{1\} \big\rangle$,
$\G_{2,2} = \big\langle \varnothing, \varnothing, \{4\} \big\rangle$
and $\G_3 = C_1$, then
\[
  \Omega_2 + 9 =  (\G_{2,1}+1) \oplus (\G_{2,2}+4) \oplus (\G_3+4).
\]
Let $A=(x_1, x_2, y_3, y_4, a_5, \ldots, a_9)$ and $B=(y_1, y_2, b_3, \ldots, b_9)$
be the $9$-cycles defined as follows:
\begin{align*}
  (x_1, x_2, y_3, y_4) &= ((0,0), (-1,0), (-1,1), (0,1)), \\
    (a_5, \ldots, a_9) &= (\infty_1, (2,0), (3,1), (1,0), (4,1)),\\
  (y_1, y_2)           &= ((0,1), (4,1)), \\
  (b_3, \ldots, b_9)   &= (\infty_2, (1,0), \infty_3, (1,1), \infty_4, (0,0), \infty_5).
\end{align*}
One can easily check that $Orb(A)$ (resp., $Orb(B)$) decomposes $\G_{2,1}+1$
(resp., $\G_{2,2}+4$).
Also, for every edge $\{c_j, c_{j+1}\}$ of $C_1$,
with $j\in[1,9]$ and $c_{10}= c_1$, we construct the cycle $F_j=(c_j, c_{j+1}, f_{j,3}, f_{j,4}, \ldots, f_{j,9})$, where
\[(f_{j,3}, f_{j,4}, \ldots, f_{j,9})= (\infty_6, (1,0), \infty_7, (1,1), \infty_8, (0,0), \infty_9).
\]
One can check that $\{F_1, F_2, \ldots, F_9\}$ is a $9$-cycle system of $\G_3+4$.
Therefore, $\mathcal{U}_1 = Orb(A)\ \cup\ Orb(B)\ \cup\ \{F_1, F_2, \ldots, F_9\}$ provides a
$9$-cycle system of $\Omega_2 + 9$.
Since the set $\{C[2]\mid C\in\mathcal{U}_1\}$ decomposes $(\Omega_2 + 9)[2]$,
and each $C[2]$ decomposes into two $9$-suns, we can easily obtain a
$9$-sun system of $(\Omega_2 + 9)[2]$. Indeed, letting
\begin{align*}
  & S_1 = \sigma(x_1, \ov{x_2}, y_3, y_4, a_5, \ldots, a_9),\;\;
    S_3 = \sigma(y_1, \ov{y_2}, b_3, \ldots, b_9),\\
  & S_{3+2j} = \sigma(c_j,\ov{c_{j+1}}, f_{j,3}, f_{j,4}, \ldots, f_{j,9}) \;\;
  \text{for $j\in[1,9]$},
    \;\; \text{and}\;\;\\
  & S_{2i} = \ov{S_{2i-1}}\;\; \text{for $i\in[1,11]$},
\end{align*}
we have that $A[2]=S_1 \oplus S_2$, $B[2]=S_3 \oplus S_4$, and
$F_j[2] = S_{3+2j}\oplus S_{4+2j}$, for every $j\in[1,9]$. Therefore
$\mathcal{U}_2 = \bigcup_{i=1}^4 Orb(S_i) \cup \{S_5, S_6, \ldots, S_{22}\}$
is a $9$-sun system of $\Omega_2[2] + 18$.

We finally use $\mathcal{U}_2$ to build a $9$-sun system of $\Omega^*_2[2] + 21 = (\Omega_2[2] + 18) \oplus (I + 3)$. By replacing the vertices of each $S_i$, as outlined in
Table \ref{Table}, we obtain the $9$-sun $T_i$.
The new 22 graphs, $T_1, T_2, \ldots, T_{22}$, are built in such a way that
\begin{align*}
  (a)\; & \text{$\bigcup_{i=1}^4 Orb(T_i) \cup \{T_5, T_6, \ldots, T_{22}\}$
        decomposes a subgraph $K$ of $\Omega^*_2[2] + 21$};\\
  (b)\; &\text{$(\Omega^*_2[2] + 21)\setminus{K}$ decomposes into seven
  $9$-suns.}
\end{align*}
This way we obtain a $9$-sun system of $\Omega^*_2[2] + 21$, and hence
the desired $9$-sun system of $K_{36}+21$.
\end{ex}

\begin{thm}\label{thm:holek=3}
Let $k\equiv 3\pmod 4\geq 7$ and $n\equiv 0,1 \pmod 4$ with  $2k < n< 10k$.
If $n\not\equiv 2,3 \pmod{k-1}$ and $\left\lfloor\frac{n-4}{k-1}\right\rfloor$ is even, then there exists a $k$-sun system of $K_{4k} + n$
except possibly when $(k,n)\in \{(7,64),(7,65)\}$.
\end{thm}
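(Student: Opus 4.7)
My plan is to follow the strategy of Theorem~\ref{thm:holek=1} and apply Lemma~\ref{lemma:main}: decompose $K_{2k}=\Omega_1\oplus\Omega_2$ so that $\Omega_1+w_1$ admits a $k$-cycle system and $\Omega_2^*[2]+w_2$ admits a $k$-sun system, with $n=2w_1+w_2$. The crucial new feature is that $\ell=(k-1)/2$ is now \emph{odd}, so every appeal to Lemma~\ref{+l}(2) (adding $\ell$ infinities to a $k$-cycle) in the template of Theorem~\ref{thm:holek=1} must be replaced by an appeal to Lemma~\ref{+l}(1) (adding $\ell$ infinities to a $1$-factor of $K_{2k}$).

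First I write $n=2(q\ell+r)+\nu$ with $1\leq r\leq\ell$ and $\nu\in\{2,3\}$. Combining $n\equiv 0,1\pmod 4$ with $\ell$ odd forces $q+r$ odd; $\lfloor(n-4)/(k-1)\rfloor$ even forces $q$ even (and hence $r$ odd); and $n\not\equiv 2,3\pmod{k-1}$ forces $r\leq\ell-1$. Together with $2k<n<10k$ this yields $q\in\{2,4,6,8,10\}$ and $r\in\{1,3,\ldots,\ell-2\}$. As in Theorem~\ref{thm:holek=1}, I set $w_1=(q-2)\ell$ and $w_2=4\ell+2r+\nu$.

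Next I choose $\Gamma_2=\Gamma_{2,1}\oplus\Gamma_{2,2}$ with $\Gamma_{2,1}=\langle\{1\},[1,k-2r-2],\{1\}\rangle$ (a Lemma~\ref{+r} piece, absorbing the $r$ infinities) and $\Gamma_{2,2}=\langle\varnothing,\{0\},\varnothing\rangle$ (a $1$-factor, absorbing $\ell$ infinities via Lemma~\ref{+l}(1)), so that $\Gamma_1=K_{2k}\setminus\Gamma_2=\langle[2,\ell],[k-2r-1,k-1],[2,\ell]\rangle$. From $\Gamma_1$ I extract a further $1$-factor $\Gamma_3$ (playing the role of $C_1$) and set $\Omega_1=\Gamma_1\setminus\Gamma_3$ and $\Omega_2=\Gamma_2\oplus\Gamma_3$. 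The crux is to decompose $\Omega_1$ into $q-2$ additional $1$-factors (each absorbing $\ell$ infinities via Lemma~\ref{+l}(1)) together with a residual collection of $k$-cycles. For $q\leq 2r+2$ all $q-1$ $1$-factors can be taken $1$-rotationally from $D_{01}(\Gamma_1)=[k-2r-1,k-1]$; for $2r+2<q\leq k+2r-1$ the extra $1$-factors are obtained by applying Lemma~\ref{HLR:1factorization} to a block $\langle D'',\{d_0\},D''\rangle\subseteq\Gamma_1$ with $D''\subseteq[2,\ell]$ of size $(q-2r-2)/2$ and $d_0\in D_{01}(\Gamma_1)$, which $1$-factorizes into $2|D''|+1$ $1$-factors; the residue then decomposes into $c=k+2r-1-q$ many $k$-cycles via Lemmas~\ref{ab, 0, 0} and~\ref{l, S S+1, 0}. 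An edge count on $\Gamma_1$, which is $(k+2r-2)$-regular, confirms feasibility exactly when $q\leq k+2r-1$, and the only admissible case violating this bound is $(k,q,r)=(7,10,1)$, which accounts for the possibly excluded pairs $(k,n)=(7,64),(7,65)$.

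Once $\Omega_1+w_1$ is a $k$-cycle system, the $k$-sun system of $\Omega_2^*[2]+w_2$ is built following the template of Theorem~\ref{thm:holek=1}: decompose as $(\Gamma_2+(\ell+r))[2]\oplus(\Gamma_3+\ell)[2]\oplus(I+\nu)$, transport the resulting $k$-cycles to pairs of $k$-suns via $\sigma$ and Lemma~\ref{from k-C to k-S}, and then re-wire a few of these $k$-suns (analogously to Table~\ref{Table}) to absorb the $\nu$ extra infinities, compensating with a handful of explicit $k$-suns built from short vertex developments (analogous to $G_1,\ldots,G_{2\nu+1}$). I expect the main obstacle to be the $1$-factor/cycle decomposition of $\Gamma_1$ near the upper bound $q=k+2r-1$, where the flexibility afforded by Lemma~\ref{HLR:1factorization} is tightly used; the re-wiring step, though notationally heavier because the initial cycles of $\Gamma_2+(\ell+r)$ now come from two different lemmas (Lemma~\ref{+r} and Lemma~\ref{+l}(1)), remains a routine analogue of the $k\equiv 1\pmod 4$ treatment.
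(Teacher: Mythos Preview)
Your parameter analysis is correct and you have correctly pinned down the exceptional pair $(k,n)\in\{(7,64),(7,65)\}$ via the inequality $q\le k+2r-1$. Your decomposition of $\Gamma_1$ into $q-1$ one-factors plus residual $k$-cycles also goes through as described.

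The gap is in the re-wiring step, which is \emph{not} a routine analogue of Theorem~\ref{thm:holek=1}. In that theorem, $\ell$ is even, so $\Gamma_{2,2}=\langle\varnothing,\varnothing,\{\ell\}\rangle$ and $\Gamma_3=C_1$ are both $k$-cycles; via Lemma~\ref{+l}(2) they each contribute a pure-difference ``cycle handle'' (an edge whose development is a $k$-cycle inside a single layer $\Z_k\times\{i\}$), and $C_1$ itself supplies the cycle of $G_7$. Here $\ell$ is odd, so your $\Gamma_{2,2}$ and $\Gamma_3$ are $1$-factors, and Lemma~\ref{+l}(1) yields a base cycle whose only finite-finite edge develops to that $1$-factor, not to a $k$-cycle. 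Consequently the only pure-difference edges in $\Omega_2^*[2]$ come from your $\Gamma_{2,1}=\langle\{1\},D,\{1\}\rangle$, and with $s=s'=1$ Lemma~\ref{+r} gives exactly one such handle in each of the four layers. But each of the $2\nu+1\in\{5,7\}$ compensating suns $G_i$ needs its cycle part to be the development of a pure-difference edge, so four handles are not enough.

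This is precisely why the paper takes $\Gamma_2=\langle\{1,2\},\ldots,\{1,2\}\rangle$, i.e.\ $s=s'=2$, so that Lemma~\ref{+r} may be invoked with both $u=0$ and $u=1$, giving two pure cycle handles per layer and hence eight in total --- enough to build $G_1,\ldots,G_{2\nu+1}$. The paper also dispenses with $\Gamma_3$ altogether and sets $w_1=(q-1)\ell$ (Case~1) or $w_1=q\ell$ (Case~2), absorbing one more block of $\ell$ infinities into $\Gamma_1$ than you do. Your outline can be repaired by enlarging $\Gamma_{2,1}$ to carry pure differences $\{1,2\}$ on both sides (and adjusting $w_1$, $D_{01}(\Gamma_2)$, and the $\Gamma_1$-decomposition accordingly); once that is done the re-wiring does become routine along the lines you indicate.
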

\begin{proof}
First,  $k\equiv 3\pmod 4\geq 7$ implies that $\ell\geq 3$ is odd.
Now, let $n = 2(q\ell + r) + \nu$ with $1\leq r\leq \ell$ and $\nu\in\{2,3\}$.
Note that $q=\left\lfloor\frac{n-4}{k-1}\right\rfloor$, hence $q$ is even.
Also, since  $2k < n< 10k$, we have $2\leq q \leq 10$.
By $q$ even and $n\equiv 0,1 \pmod 4$ it follows that $r$ is odd,
and $n\not\equiv 2,3 \pmod{k-1}$ implies that $r\neq \ell$.
To sum up,
\begin{equation}
\nonumber  \text{$q$ is even with $2\leq q \leq 10$, and $r$ is odd with $1\leq r\leq \ell-2$}.
\end{equation}
As  in the previous theorem,
let $V(K_{4k}+n) = \big(\Z_{k}\times[0,3]\big)\cup\{\infty_h\mid h\in \Z_{n-\nu}\}
\cup \{\infty'_1,\infty'_2, \infty'_\nu\}$.

We split the proof into two cases.

Case 1) $q\leq 2r+4$.
We start decomposing $K_{2k}$ into the following two graphs:
\begin{align*}
  \G_1 = \big\langle [3,\ell], [k-2r-2, k], [3,\ell] \big\rangle \;\;\mbox{and}\;\;
  \G_2 = \big\langle \{1,2\}, [1,k-2r-3], \{1,2\} \big\rangle.
\end{align*}
Since $q\leq 2r+4$, the graph $\G_1$ can be further decomposed into the following graphs:
 \[
 \G_{1,1} =
  \big\langle \{\ell\},[k-2r+q-3,k], \varnothing \big\rangle, \hspace{.3cm}
 \G_{1,2} =
 \big\langle [3,\ell-1], \varnothing,[3,\ell]\big\rangle,\]
 \[
  \G_{1,3} =
 \big\langle \varnothing,[k-2r-2,k-2r+q-4], \varnothing\big\rangle.\]
The first two graphs have a $k$-cycle system
 by Lemmas \ref{l, S S+1, 0} and \ref{ab, 0, 0}, while $\G_{1,3}$ decomposes into $(q-1)$ $1$-factors, say $J_1,J_2,\ldots,J_{q-1}$.
Setting $w_1=(q-1)\ell$, by Lemma \ref{trivial} we have that:
\[ \G_1 + (q-1)\ell = \oplus_{i=1}^{q-1} (J_i + \ell) \oplus (\G_{1,1}\oplus \G_{1,2}).
\]
Hence $\G_1+(q-1)\ell$ has a $k$-cycle system
since each $J_i+\ell$ decomposes into $k$-cycles by Lemma \ref{+l}.

Letting $w_2=n-2w_1=2(\ell+r)+\nu$ and recalling that $K_{2k}=\G_1\oplus \G_2$, by Lemma \ref{lemma:main}
it remains  to construct a $k$-sun system of $\G_2^*[2] + w_2.$
We start decomposing $\G_2$ into the following graphs:
\begin{align*}
  \G_{2,0} &= \big\langle \{1,2\}, [1,k-2r-4], \{1,2\}  \big\rangle  \;\; \text{and}\;\;
  \G_{2,1} = \big\langle \varnothing, \{k-2r-3\}, \varnothing \big\rangle.
\end{align*}
Recalling that $\G^*_2[2]= \G_2[2] \oplus I$,
where $I$ denotes the $1$-factor $\big\{ \{z, \ov{z}\} \mid z\in \Z_k \times\{0,1\}\big\}$ of $K_{4k}$,
by Lemma \ref{trivial} we have that
\begin{equation}\nonumber
\G_2^*[2] + w_2 =  (\G_{2,1} + \ell )[2] \oplus (\G_{2,0} + r)[2] \oplus (I+\nu).
\end{equation}
By Lemmas \ref{+r} and \ref{+l} there exist a $k$-cycle $A=(x_1,x_2,x_3,y_4,y_5,y_6,a_7,\ldots,a_k)$ of $\G_{2,0} + r$
and a $k$-cycle $B=(y,x,b_3,\ldots,b_{k})$ of
$\G_{2,1} + \ell$, satisfying the following properties:
\begin{align*}
    & 
    \text{$Orb(A)\ \cup\ Orb(B)$ is a $k$-cycle system of $\G_2 +(\ell+r)$;} \\
    & 
    \text{$Dev(\{x_1, x_2\})$ and $Dev(\{x_2, x_3\})$ are $k$-cycles with vertices in $\Z_{k}\times\{0\}$;}\\
    & 
    \text{$Dev(\{y_4, y_5\})$ and $Dev(\{y_5, y_6\})$ are $k$-cycles with vertices in $\Z_{k}\times\{1\}$;}\\
     & 
     \text{$x\in \Z_{k}\times\{0\}$ and $y\in \Z_{k}\times\{1\}$.}
\end{align*}
Set $A'=(x_1, \ov{x_2}, x_3,y_4,\ov{y_5},y_6,a_7, \ldots, a_k)$
and $B'=(y,\ov{x},b_3,\ldots,b_{k})$
 and let $\cS = \{\sigma(A'), \ov{\sigma(A')},\sigma(B'), \ov{\sigma(B')}\}$.
By Lemma \ref{from k-C to k-S}, we have that
$\bigcup_{S\in \cS} Orb(S)$ is a $k$-sun system of
$\big(\G_2 +(\ell+r)\big)[2] = \G_2[2] +2(\ell+r) = (\G_2^*[2] + w_2)\setminus (I+\nu)$.

To construct a $k$-sun system of $\G_2^*[2] + w_2$ we proceed as in Theorem \ref{thm:holek=1}.
We modify the graphs in $\cS$ and obtain four $k$-suns $T_1,T_2,T_3,T_4$
whose translates between them cover all edges incident with
$\infty'_1, \infty'_2$, and possibly $\infty'_3$ when $\nu=3$.
Then we construct further $2\nu+1$ $k$-suns $G_1,\ldots,G_{2\nu+1}$ to cover the missing edges.
The reader can check that $\bigcup_{i=1}^4 Orb(T_i) \cup \{G_1,\ldots,G_{2\nu+1}\}$ is a
$k$-sun system of $\G_2^*[2] + w_2$.

%
%
The graphs $T_i$ are the following, where the elements in bold are the replaced vertices:
\begingroup
\allowdisplaybreaks
\begin{align*}
T_1 &=
\begin{cases}
  \left(
  \begin{matrix}
 x_1  &  \ov{x_2}  &  x_3  &  \bm{\infty'_2} &  \ov{y_5} &  y_6  &  a_7      & \ldots & a_{k-1}     & a_k\\
        \bm{\infty'_1}  &  \ov{x_3} &  \ov{y_4}  & y_5 &  \bm{y_4} &  \ov{a_7}  &  \ov{a_8} & \ldots & \ov{a_{k}}  & \ov{x_1}
  \end{matrix}
  \right) &
  \mbox{if $\nu=2$},\\
  \left(
  \begin{matrix}
        x_1  &  \ov{x_2}  &  x_3  &  \bm{\infty'_2} &  \ov{y_5} &  y_6  &  a_7      & \ldots & a_{k-1}     & a_k\\
  \bm{\infty'_1}  &  \bm{\infty'_3} &  \ov{y_4}  & y_5 &  \bm{y_4} &  \ov{a_7}  &  \ov{a_8} & \ldots & \ov{a_{k}}  & \ov{x_1}
  \end{matrix}
  \right) &
  \mbox{if $\nu=3$},
\end{cases}\\
T_2 &=
\begin{cases}
  \left(
  \begin{matrix}
\ov{x_1}  &  x_2  &  \ov{x_3}  &  \bm{\infty'_1} &  y_5 &  \ov{y_6}  &  \ov{a_7}      & \ldots & \ov{a_{k-1}}     & \ov{a_k}\\
        \bm{\infty'_2}  &  x_3 &  y_4  & \ov{y_5} &  y_6 &  a_7  &  a_8 & \ldots & a_{k}  & x_1
  \end{matrix}
  \right) &
  \mbox{if $\nu=2$},\\
  \left(
  \begin{matrix}
        \ov{x_1}  &  x_2  &  \ov{x_3}  &  \bm{\infty'_1} &  y_5 &  \ov{y_6}  &  \ov{a_7}      & \ldots & \ov{a_{k-1}}     & \ov{a_k}\\
  \bm{\infty'_2}  &  \bm{\infty'_3} &  y_4  & \ov{y_5} &  y_6 &  a_7  &  a_8 & \ldots & a_{k}  & x_1
  \end{matrix}
  \right) &
  \mbox{if $\nu=3$},
\end{cases}\\
T_3 &=
\begin{cases}
  \sigma(B') &
  \mbox{if $\nu=2$},\\
  \left(
  \begin{matrix}
 y &  \ov{x}  &  b_3  &  b_4 &        \ldots & b_{k-1}     & b_{k}\\
        \bm{\infty'_3}  &  \ov{b_3} &  \ov{b_4}  & \ov{b_5} &   \ldots & \ov{b_{k}}  & \ov{y}
  \end{matrix}
  \right) &
  \mbox{if $\nu=3$},
\end{cases}\\
T_4 &=
\begin{cases}
  \ov{\sigma(B')} &
  \mbox{if $\nu=2$},\\
  \left(
  \begin{matrix}
     \ov{y}  &  x  &  \ov{b_3}  &    \ov{b_4}      & \ldots & \ov{b_{k-1}}     & \ov{b_{k}}\\
  \bm{\infty'_3}  &  b_3 &  b_4  & b_5 &   \ldots & b_{k}  & y
  \end{matrix}
  \right) &
  \mbox{if $\nu=3$}.
\end{cases}
\end{align*}
\endgroup

%

The graphs $G_i$, for $i = [1,2\nu+1]$, are so defined:
\[
\begin{array}{ll}
\vspace{0.2cm}
    G_1 = Dev\big(x_1 \sim x_2 \sim \ov{x_2}\big),
    &
   G_2 = Dev(y_5 \sim y_4 \sim x_3),
\\[1ex] \rule{0pt}{1\normalbaselineskip}
      G_3 = Dev(\{\ov{x_1}, \ov{x_2}\}\oplus \{\ov{x_3},\ov{y_4}\}),
      &
  G_4 = Dev(\ov{y_5} \sim \ov{y_4} \sim y_5),
  \\[1ex] \rule{0pt}{1\normalbaselineskip}
  G_5 = Dev(\ov{y_5} \sim \ov{y_6} \sim y_6),
  &
      G_6 = Dev(\{x_2, x_3\}\oplus \{x,y\}),
      \\[1ex] \rule{0pt}{1\normalbaselineskip}
   G_7 = Dev(\{\ov{x_2}, \ov{x_3}\}\oplus \{\ov{x},\ov{y}\}).
   &
  \end{array}
  \]

Case 2) $q\geq 2r+6$. Note that this implies $r=1$ and $q=8,10$. As before $K_{2k}=\G_1\oplus \G_2$ where
\begin{align*}
  \G_1 = \big\langle [3,\ell], \{0\}\cup [k-5, k-1], [3,\ell] \big\rangle \;\;\mbox{and}\;\;
  \G_2 = \big\langle \{1,2\}, [1,k-6], \{1,2\} \big\rangle.
\end{align*}
Since $(k,n)\neq (7,64),(7,65)$ then $(\ell,q)\neq(3,10)$, hence
 the graph $\G_1$ can be  decomposed into the following graphs:
\[
 \G_{1,1} =
 \big\langle \varnothing,[k-5,k-1], \varnothing\big\rangle, \quad\quad
 \G_{1,2} =
  \left\langle \left[3,    \frac{q-2}{2}\right], \{0\},\left[3,\frac{q-2}{2}\right] \right\rangle,\]
  \[
 \G_{1,3} =
 \left\langle \left[\frac{q}{2},\ell\right], \varnothing,\left[\frac{q}{2},\ell\right]\right\rangle.
\]
The graph $\G_{1,1}$ decomposes into five $1$-factors $J_1,\ldots,J_5$, while by Lemma \ref{HLR:1factorization} $\G_{1,2}$ decomposes
into $(q-5)$ $1$-factors $J'_1,\ldots, J'_{q-5}$.
Letting $w_1=q\ell$, by Lemma \ref{trivial} we have that
\[
\G_1+w_1=(\G_{1,1}+5\ell)\oplus (\G_{1,2}+(q-5)\ell)\oplus \G_{1,3}=\oplus_{i=1}^5(J_i+\ell) \oplus \left[\oplus_{i=1}^{q-5}(J'_i+\ell)\right]\oplus \G_{1,3}.
\]
By Lemmas \ref{+l} and \ref{ab, 0, 0}, each $J_i+\ell$, each $J'_i+\ell$ and $\G_{1,3}$ decompose into $k$-cycles.
Hence $\G_1 + q\ell$ has a $k$-cycle system.
Let now $w_2=n-2w_1=2+\nu$. Note that a $k$-sun system of $\G_2^*[2]+w_2$ can be obtained as in Case 1, where $\G_{2,1}$ is empty.
\end{proof}

\begin{thm}\label{thm:holek=3,a}
Let $k\equiv 3\pmod 4\geq 11$ and $n\equiv 0,1 \pmod 4$ with  $2k < n < 10k$.
If $\left\lfloor\frac{n-4}{k-1}\right\rfloor$ is even, and $n\equiv 2,3 \pmod{k-1}$, then there is a
$k$-sun system of $K_{4k}+n$, except possibly when $(k,n)\in\{(11,112),$ $(11,113)\}$.
\end{thm}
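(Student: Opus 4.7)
Writing $n=2(q\ell+r)+\nu$ with $1\leq r\leq\ell$ and $\nu\in\{2,3\}$, the hypothesis $n\equiv 2,3\pmod{k-1}$ forces $r=\ell$, while $q=\lfloor(n-4)/(k-1)\rfloor$ being even, together with $2k<n<10k$ and $\ell\geq 5$ (since $k\geq 11$), restricts $q$ to an even value in $\{2,4,6,8\}$. As in the earlier theorems of this section, the plan is to decompose $K_{2k}=\G_1\oplus\G_2$, construct a $k$-cycle system of $\G_1+w_1$ and a $k$-sun system of $\G_2^*[2]+w_2$ with $n=2w_1+w_2$, and then glue the pieces via Lemma \ref{lemma:main}.

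The specific pair $(\G_1,\G_2)$ used in Theorem \ref{thm:holek=3} degenerates when $r=\ell$, because its mixed-difference interval $[1,k-2r-3]$ for $\G_2$ becomes empty and Lemma \ref{+r} can no longer be invoked with $r=\ell$. I would repair this by shifting a short sub-interval of mixed differences from $\G_1$ into $\G_2$; concretely, taking $\G_2=\langle\{1,2\},\{d\},\{1,2\}\rangle$ for a single suitable $d\in\Z_k$ allows Lemma \ref{+r} to be applied to $\G_2$ with $s=s'=2$ and the odd translation $r^*=\ell-2$ (indeed $|D|=1=k-4-2r^*$ and $\ell$ odd yields $r^*$ odd). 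The resulting gap between the infinities $2r^*+\nu$ naturally absorbed by $\G_2^*[2]$ and the target $w_2$ is re-routed to additional $1$-factor blocks inside the enlarged $\G_1$, each transformed into a bundle of $k$-cycles via Lemma \ref{+l}. The pure-difference blocks of $\G_1$ are then dealt with, as in Theorem \ref{thm:holek=3}, by Lemmas \ref{ab, 0, 0} and \ref{l, S S+1, 0}, while any residual $1$-factor piece is split off using Lemma \ref{HLR:1factorization} and similarly absorbed by Lemma \ref{+l}.

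For the $k$-sun system of $\G_2^*[2]+w_2$, I would produce base $k$-cycles $A,B$ of $\G_2+(\ell+r^*)$ via Lemmas \ref{+r} and \ref{+l}, each containing pairs of consecutive vertices whose developments are $k$-cycles lying entirely in one side of $V(K_{2k})$; lift them to four $k$-suns through the map $\sigma$ and Lemma \ref{from k-C to k-S}; and absorb the $1$-factor $I$ together with the extra infinities $\infty'_1,\infty'_2,\infty'_\nu$ by the vertex-replacement technique of Theorem \ref{thm:holek=1}, producing modified suns $T_1,T_2,T_3,T_4$ and $2\nu+1$ auxiliary $k$-suns $G_1,\ldots,G_{2\nu+1}$ built from the developments of the replaced edges. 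I expect the main obstacle to lie in this last step: the shifted $\G_2$ must still yield base cycles whose structure admits substitutions simultaneously covering the $\nu$ new infinities and every edge of $I+\nu$, and this slack shrinks as $\ell$ decreases, which presumably explains the listed possible exceptions $(k,n)\in\{(11,112),(11,113)\}$, corresponding to the smallest allowed value $\ell=5$.
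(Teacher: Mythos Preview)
Your overall strategy---decompose $K_{2k}=\G_1\oplus\G_2$, build a $k$-cycle system of $\G_1+w_1$ and a $k$-sun system of $\G_2^*[2]+w_2$, then apply Lemma~\ref{lemma:main}---is exactly right. The gap is in the arithmetic forced by your choice $\G_2=\langle\{1,2\},\{d\},\{1,2\}\rangle$. With $s=s'=2$ and $r^*=\ell-2$, Lemma~\ref{+r} yields $w_2=2r^*+\nu=2\ell-4+\nu$, hence $w_1=(n-w_2)/2=q\ell+2$. But your $\G_1$ then has pure differences $[3,\ell]$ on both sides, so Lemma~\ref{+r} (which only handles intervals $[1+\epsilon,\,\cdot\,]$ with $\epsilon\in\{0,1\}$) is unavailable there; the only way to attach infinities to $\G_1$ is through $1$-factors via Lemma~\ref{+l}, each absorbing exactly $\ell$. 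That forces $w_1\in\ell\Z$, contradicting $w_1=q\ell+2$ for $\ell\geq5$. The same obstruction persists if you enlarge $\G_2$ by one more mixed difference and invoke Lemma~\ref{+l} for your cycle $B$: you get $w_1=(q-1)\ell+2$, still off by $2$. (Incidentally, your $\G_2$ as written supplies only one base cycle, not the two cycles $A,B$ you refer to.)

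The paper resolves this by making $\G_2$ \emph{asymmetric}: it takes $\G_2=\langle\{1,2\},[1,k-4],\{1,2,3\}\rangle$ and splits it as $\G_{2,1}=\langle\{1\},[1,k-2r_1-2],\{1\}\rangle$ and $\G_{2,2}=\langle\{2\},[k-2r_1-1,k-4],\{2,3\}\rangle$, applying Lemma~\ref{+r} to each piece with $r_1$ odd and $r_2\geq2$ even satisfying $r_1+r_2=\ell$. The extra pure difference~$3$ on one side is precisely what allows $s_2'=s_2+1$, hence $r_2$ even, so that the two parities are complementary and the total is exactly $\ell$. This gives $w_2=2\ell+\nu$ and $w_1=q\ell$, which \emph{is} a multiple of~$\ell$. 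Finally, the listed exceptions $(11,112),(11,113)$ arise not from the substitution step but from the decomposition of $\G_1$ when $q\geq6$: the auxiliary piece $\G_{1,3}=\langle[3,\ell+2-q/2],\varnothing,[4,\ell+2-q/2]\rangle$ becomes ill-defined when $(\ell,q)=(5,10)$.
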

\begin{proof}
  Let $n = 2(q\ell + r) + \nu$ with $1\leq r\leq \ell$ and $\nu\in\{2,3\}$. Clearly,
$q = \left\lfloor\frac{n-4}{k-1}\right\rfloor$, hence $q$ is even. Since $k\geq 11$, $2k <n < 10k$ and $n\equiv 2,3 \pmod{2\ell}$,
 we have that
\begin{eqnarray*}
\text{$q$ is even with $2\leq q\leq 10$ and $r=\ell\geq 5$ is odd}.
\end{eqnarray*}
As before,
let $V(K_{4k}+n) = \big(\Z_{k}\times[0,3]\big)\cup\{\infty_h\mid h\in \Z_{n-\nu}\}
\cup \{\infty'_1,\infty'_2, \infty'_\nu\}$.

We start decomposing $K_{2k}$ into the following two graphs:
\[  \G_1 = \big\langle [3,\ell],[k-3,k] , [4,\ell]  \big\rangle,\hspace{.3cm}
  \G_2 = \big\langle \{1,2\}, [1, k-4], \{1,2,3\} \big\rangle.
\]
If $q=2,4$, $\G_1$ can be further decomposed into
\[ \G_{1,1} = \big\langle \varnothing,[k-3,k-4+q], \varnothing \big\rangle,\hspace{.3cm}
\G_{1,2} = \big\langle \varnothing,[k-3+q,k],\{\ell\} \big\rangle,
\]
\[
  \G_{1,3} = \big\langle [3,\ell],\varnothing, [4,\ell-1]\big\rangle.
\]
The graph $\G_{1,1}$ decomposes into $q$ $1$-factors, say $J_1,\ldots,J_q$.
Letting $w_1=q\ell$, by Lemma \ref{trivial} we have that
\[
\G_1+w_1=(\G_{1,1}+w_1)\oplus \G_{1,2} \oplus \G_{1,3}= \oplus_{i=1}^{q}(J_i+\ell)\oplus \G_{1,2} \oplus \G_{1,3} .
\]
Lemmas \ref{+l}, \ref{l, S S+1, 0} and \ref{ab, 0, 0} guarantee that each $J_i+\ell$, $\G_{1,2}$ and $\G_{1,3}$
decompose into $k$-cycles, hence $\G_1+w_1$ has a $k$-cycle system.
Suppose now $q\geq 6$. By $(k,n)\not\in\{(11,112),(11,113)\}$,
we have $(\ell,q)\neq (5,10)$.
 In this case $\G_1$ can be further decomposed into
\[ \G_{1,1} = \big\langle \varnothing,[k-3,k-1], \varnothing \big\rangle,\hspace{.3cm}
\G_{1,2} = \left\langle \left[\ell+3 - \frac{q}{2}, \ell\right],\{0\},\left[\ell+3 - \frac{q}{2}, \ell\right] \right\rangle,
\]
\[
\G_{1,3}=\left\langle \left[3, \ell+2 - \frac{q}{2}\right],\varnothing,
             \left[4, \ell+2 - \frac{q}{2}\right] \right\rangle.
\]
The graph $\G_{1,1}$ can be decomposed into three $1$-factors say $J_1,J_2,J_3$, also by Lemma \ref{HLR:1factorization}
the graph $\G_{1,2}$ can be decomposed into $(q-3)$ $1$-factors say $J'_1,\ldots,J'_{q-3}$.
Set again $w_1=q\ell$, by Lemma \ref{trivial} we have that
\[
\G_1+w_1=(\G_{1,1}+3\ell)\oplus (\G_{1,2}+(q-3)\ell) \oplus \G_{1,3} = \oplus_{i=1}^{3}(J_i+\ell)\oplus \left[\oplus_{j=1}^{q-3}(J'_j+\ell)\right] \oplus \G_{1,3}.
\]
By Lemmas \ref{+l} and \ref{ab, 0, 0} we have that each $J_i+\ell$, each $J'_j+\ell$ and $\G_{1,3}$  decompose into $k$-cycles,
hence $\G_1+w_1$ has a $k$-cycle system.
Hence for any value of $q$ we have proved that $\G_1+w_1$ has a $k$-cycle system.

Now, setting $w_2 = n - 2w_1 = 2\ell + \nu$ and recalling that $K_{2k} = \G_1 \oplus \G_2$,
by Lemma \ref{lemma:main} it is left to show that
$\G_2^*[2] + w_2$ has a $k$-sun system.
Let $r_1$ and $r_2\geq 2$ be an odd and an even integer, respectively, such that $r_1+r_2=r=\ell$.
Note that $\G_2$ can be further decomposed into
\[
  \G_{2,1} = \big\langle \{1\}, [1, k-2r_1-2], \{1\} \big\rangle, \hspace{.3cm}
  \G_{2,2} = \big\langle \{2\},[k-2r_1-1,k-4], \{2,3\}\big\rangle.
\]
Recalling that $\G^*_2[2]= \G_2[2] \oplus I$,
where $I$ denotes the $1$-factor $\big\{ \{z, \ov{z}\} \mid z\in \Z_k \times\{0,1\}\big\}$ of $K_{4k}$,
by Lemma \ref{trivial} we have that
\[
\G_2^*[2] + w_2 = \oplus_{i=1}^2 \big(\G_{2,i} + r_i \big)[2] \oplus (I+\nu).
\]

By Lemma \ref{+r} there is a $k$-cycle $A=(y_1,y_2,x_3,x_4,a_5,\ldots, a_k)$ of $\G_{2,1}+r_1$
and a $k$-cycle $B=(x_1,x_2,y_3,y_4,b_5,\ldots,b_k)$ of  $\G_{2,2}+r_2$
such that
\begin{align}
  \begin{split}\label{thm:holek=3,a2}
     & \text{$Orb(A) \ \cup\ Orb(B)$ is a $k$-cycle system of $\G_2 +\ell$,}\\
    & \text{$Dev(\{x_1,x_2\})$ and $Dev(\{x_3,x_4\})$ are $k$-cycles with vertices in $\Z_k\times \{0\}$,} \\
    & \text{$Dev(\{y_1,y_2\})$ and $Dev(\{y_3,y_4\})$ are $k$-cycles with vertices in $\Z_k\times \{1\}$.}
  \end{split}
\end{align}
Set $A'=(y_1,\ov{y_2},x_3,\ov{x_4},a_5,\ldots, a_k)$
and $B'=(x_1,\ov{x_2},y_3,\ov{y_4},b_5,\ldots,b_k)$.
Let $\cS=\{\sigma(A'),\ov{\sigma(A')},\sigma(B'),\ov{\sigma(B')}\}$, by Lemma \ref{from k-C to k-S},
we have that  $\bigcup_{S\in \cS} Orb(S)$ is a $k$-sun system of
$\big(\G_2 +\ell\big)[2] = \G_2[2] +2\ell = (\G_2^*[2] + w_2)\setminus (I+\nu)$.
To construct a $k$-sun system of $\G_2^*[2] + w_2$, we build a family $\cT=\{T_1,T_2,T_3, T_4\}$ of $k$-suns
by modifying
the graphs in $\cS$ so that $\bigcup_{T\in \cT} Orb(T)$ covers all the edges incident with
$\infty'_1, \infty'_2$, and possibly $\infty'_3$ when $\nu=3$.
We then construct further
$(2\nu+1)$ $k$-suns $G_1, G_2, \ldots, G_{2\nu +1}$ which cover the remaining edges exactly once. Hence,
$\bigcup_{T\in \cT} Orb(T) \cup \{G_1, G_2, \ldots, G_{2\nu +1}\}$ is a
$k$-sun system of $\G_2^*[2] + w_2$.

The graphs $T_1, \ldots, T_4$ and $G_1, \ldots, G_{2\nu +1}$
are the following, where as before the elements in bold are the replaced vertices.
\begingroup
\allowdisplaybreaks
\begin{align*}
T_{1} &=
  \left(
  \begin{matrix}
         y_1        &    \ov{y_2}    &       x_3  &   \ov{x_4} & a_5  &  \ldots &     a_{k-1}    & a_k\\
       \bm{\infty'_2} & \ov{x_3}           & x_4   &   \ov{a_5} & \ov{a_6} & \ldots & \ov{a_k}   & \ov{y_1}\\
  \end{matrix}
  \right),\\
T_2 &=
\begin{cases}
  \left(
  \begin{matrix}
 \ov{y_1} &      \bm{\infty'_1}  &  \ov{x_3} &        x_4 &  \ov{a_5}    & \ldots & \ov{a_{k-1}}  & \ov{a_k}\\
    \bm{\infty'_2} & x_3  &  \ov{x_4} &     a_5  &       a_6 & \ldots &     a_{k}    & \ov{y_1}
  \end{matrix}
  \right) &
  \mbox{if $\nu=2$},\\
  \left(
  \begin{matrix}
   \ov{y_1} &      \bm{\infty'_1}  &  \ov{x_3} &        x_4 &  \ov{a_5}    & \ldots & \ov{a_{k-1}}  & \ov{a_k}\\
    \bm{\infty'_2} & x_3  &  \bm{\infty'_3} &     a_5  &       a_6 & \ldots &     a_{k}    & \ov{y_1}
  \end{matrix}
  \right) &
  \mbox{if $\nu=3$},
\end{cases}\\
T_{3} &=
  \left(
  \begin{matrix}
         x_{1} &    \ov{x_2}    &     y_3       &    \ov{y_4}  & b_5   &     \ldots &    b_{k-1}    & b_{k}\\
       \bm{\infty'_2} & \ov{y_3}  & \bm{\infty'_1} &  \ov{b_5} & \ov{b_6} &    \ldots & \ov{b_k}   & \ov{x_{1}}\\
  \end{matrix}
  \right),\\
T_4 &=
\begin{cases}
  \left(
  \begin{matrix}
 \ov{x_1} &      x_2  &  \ov{y_3} &        y_4 &  \ov{b_5}   & \ldots & \ov{b_{k-1}}  & \ov{b_k}\\
    \bm{\infty'_2} & y_3  &  \ov{y_4} &     b_5  &       b_6  & \ldots &     b_{k}    & {x_1}
  \end{matrix}
  \right) &
  \mbox{if $\nu=2$},\\
  \left(
  \begin{matrix}
\ov{x_1} &      x_2  &  \bm{\infty'_3} &        y_4 &  \ov{b_5}   & \ldots & \ov{b_{k-1}}  & \ov{b_k}\\
    \bm{\infty'_2} & y_3  &  \ov{y_4} &   b_5  &       b_6  & \ldots &     b_{k}    & {x_1}
  \end{matrix}
  \right) &
  \mbox{if $\nu=3$}.
\end{cases}
\end{align*}
\endgroup
\[
\begin{array}{ll}
  G_1 = Dev\big(y_1 \sim y_2 \sim x_3 \big),
&
  G_2 = Dev\big(\ov{y_2} \sim \ov{y_1} \sim y_2 \big),
\\[1ex] \rule{0pt}{1\normalbaselineskip}
  G_3 = Dev\big(y_3 \sim y_4 \sim \ov{y_4} \big),
&
  G_4 = Dev\big(\{\ov{x_1}, \ov{x_2}\} \oplus \{\ov{x_3}, y_2\} \big),
\\[1ex] \rule{0pt}{1\normalbaselineskip}
   G_5 =
  \begin{cases}
     Dev\big( x_1 \sim x_2 \sim \ov{x_2} \big) & \text{if $\nu=2$},\\
     Dev\big( x_1 \sim x_2 \sim \ov{y_3} \big) & \text{if $\nu=3$},\\
  \end{cases}
&
  G_6 = Dev\big(\ov{x_3} \sim \ov{x_4} \sim x_4\} \big),
\\[1ex] \rule{0pt}{1\normalbaselineskip}
  G_7 = Dev\big(\ov{y_4} \sim \ov{y_3} \sim y_4 \big).
\end{array}
\]
By recalling (\ref{thm:holek=3,a2}), it is not difficult to check that
the graphs $G_h$ are $k$-suns.

\end{proof}

\begin{thm}\label{thm:holek=3, b1}
Let $k\equiv 3\pmod 4\geq 7$ and $n\equiv 0,1 \pmod 4$ with  $2k < n<10k$.
If $\left\lfloor\frac{n-4}{k-1}\right\rfloor$ is odd and $n\not\equiv 0,1 \pmod{k-1}$, then there is a
$k$-sun system of $K_{4k}+n$.
\end{thm}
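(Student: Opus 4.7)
The plan is to mirror the proof of Theorem \ref{thm:holek=3} with the parities of $q$ and $r$ exchanged. Writing $n=2(q\ell+r)+\nu$ with $1\leq r\leq\ell$ and $\nu\in\{2,3\}$, so that $q=\lfloor(n-4)/(k-1)\rfloor$ is odd by hypothesis, the identity $n=2q\ell+2r+\nu$ combined with $\ell$ odd (since $k\equiv 3\pmod 4$) and $n\equiv 0,1\pmod 4$ forces $r$ to be even. The hypothesis $n\not\equiv 0,1\pmod{k-1}$ excludes $r=\ell-1$, and $r=\ell$ is ruled out by $\ell$ odd. Thus $r\in\{2,4,\ldots,\ell-3\}$, forcing $\ell\geq 5$ and $k\geq 11$ (so $k=7$ is vacuous), and from $2k<n<10k$ one gets $q\in\{3,5,7,9\}$.

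Decompose $K_{2k}=\G_1\oplus\G_2$ with
\[
  \G_1=\big\langle [2,\ell],\ [k-2r-2,k],\ [3,\ell]\big\rangle,\qquad
  \G_2=\big\langle \{1\},\ [1,k-2r-3],\ \{1,2\}\big\rangle,
\]
so that $\G_2$ has $(s,s')=(1,2)$ on its pure sides; since $s+s'=3$ is odd and $r$ is even, Lemma \ref{+r} applies to $\G_2+r$. In the generic range $q\leq 2r+3$, further split $\G_1=\G_{1,1}\oplus\G_{1,2}\oplus\G_{1,3}$, where $\G_{1,1}=\langle\{\ell\},[k-2r+q-2,k],\varnothing\rangle$ is cycle-decomposed by Lemma \ref{l, S S+1, 0} (applied to $S=\{k-2r+q-3,k-2r+q-1,\ldots,k-2\}$, a set of odd integers contained in $\{2i-1\mid 1\leq i\leq\ell\}$), $\G_{1,2}=\langle[2,\ell-1],\varnothing,[3,\ell]\rangle$ is cycle-decomposed by Lemma \ref{ab, 0, 0} (both intervals contain an integer coprime with $k$, namely $2$ and $\ell$), and $\G_{1,3}=\langle\varnothing,[k-2r-2,k-2r+q-3],\varnothing\rangle$ is a union of $q$ $1$-factors each of which combines with $\ell$ infinities to form a $k$-cycle system via Lemma \ref{+l}(1). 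Setting $w_1=q\ell$ produces a $k$-cycle system of $\G_1+w_1$, whence $w_2=n-2w_1=2r+\nu$. The sporadic large-$q$ range (only $r=2$, $q=9$) is handled in the style of Case 2 of Theorem \ref{thm:holek=3} by replacing $\G_{1,2}$ with a $\langle D,\{0\},D\rangle$-subgraph $1$-factorized via Lemma \ref{HLR:1factorization}.

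It remains, by Lemma \ref{lemma:main}, to build a $k$-sun system of $\G_2^*[2]+w_2=(\G_2+r)[2]\oplus(I+\nu)$, where $I$ is the canonical $1$-factor $\{\{z,\ov z\}\mid z\in\Z_k\times\{0,1\}\}$ of $K_{4k}$. Lemma \ref{+r}, applied with $(\epsilon,s,s')=(0,1,2)$ and even $r$, produces a $k$-cycle $A=(x_1,x_2,y_3,y_4,y_5,a_6,\ldots,a_k)$ whose $\Z_k$-orbit tiles $\G_2+r$; the three pure edges of $A$ yield three development $k$-cycles, namely $Dev(\{x_1,x_2\})\subseteq\Z_k\times\{0\}$ and $Dev(\{y_3,y_4\}),Dev(\{y_4,y_5\})\subseteq\Z_k\times\{1\}$. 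Form $A'=(x_1,\ov{x_2},y_3,\ov{y_4},y_5,a_6,\ldots,a_k)$; by Lemma \ref{from k-C to k-S}, $Orb(\sigma(A'))\cup Orb(\ov{\sigma(A')})$ is a $k$-sun system of $(\G_2+r)[2]$. Following the template of Table \ref{Table}, modify four of these translates into $T_1,T_2,T_3,T_4$ by substituting a short list of vertices with $\infty'_1,\infty'_2$ (and $\infty'_3$ when $\nu=3$) so that $\bigcup_{i=1}^4 Orb(T_i)$ covers exactly the edges incident with the new infinities; the displaced edges together with $I$ are then absorbed by $2\nu+1$ extra $k$-suns $G_1,\ldots,G_{2\nu+1}$ built on the three development cycles.

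The main obstacle is this final construction: the $(s,s')=(1,2)$ shape of $A$ (a single pure edge in $\Z_k\times\{0\}$ and two pure edges in $\Z_k\times\{1\}$) differs from the $(2,2)$ shape used in Theorem \ref{thm:holek=3}, so Table \ref{Table} and the auxiliary suns $G_1,\ldots,G_{2\nu+1}$ must be redesigned; in particular the single side-$0$ development cycle must do the work that two side-$0$ cycles did previously, forcing a different arrangement of pendants. Verifying that each $G_i$ is still a genuine $k$-sun and that the union of the displaced edges and $I$ decomposes precisely into these $G_i$ is the only nontrivial check; every other step is a direct adaptation of the templates already established in Theorems \ref{thm:holek=3} and \ref{thm:holek=3,a}.
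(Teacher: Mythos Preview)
Your overall strategy---decompose $K_{2k}=\G_1\oplus\G_2$, handle $\G_1+w_1$ by cycle systems and reduce to $\G_2^*[2]+w_2$ via Lemma \ref{lemma:main}---is the right template. The decomposition of $\G_1$ is fine (including your identification of the sporadic case $r=2$, $q=9$). The gap is in the final step.

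With your choice $\G_2=\langle\{1\},[1,k-2r-3],\{1,2\}\rangle$, Lemma \ref{+r} produces \emph{one} base cycle $A$, hence exactly \emph{two} base suns $\sigma(A')$ and $\ov{\sigma(A')}$. Your sentence ``modify four of these translates into $T_1,T_2,T_3,T_4$ \ldots\ so that $\bigcup_{i=1}^4 Orb(T_i)$ covers the infinity edges'' is inconsistent with this: there are only two orbits available, not four. If you instead modify four individual translates, the unmodified translates in each orbit are lost and the edge count fails. Even restricting to two $T_i$'s, the $(s,s')=(1,2)$ shape of $A$ gives only three pure development edges (one in $\Z_k\times\{0\}$, two in $\Z_k\times\{1\}$); after barring, at most six pure $k$-cycles are available to serve as cores of the $G_i$, but you need $2\nu+1=7$ of them when $\nu=3$. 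So the scheme cannot close for $\nu=3$.

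The paper's proof avoids this by enlarging $\G_2$ to $\langle[1,3],[1,k-2r-2],[1,2]\rangle$, i.e.\ $(s,s')=(3,2)$, and then splitting off \emph{three} single mixed differences $\G_{2,i}=\langle\varnothing,\{k-2r-5+i\},\varnothing\rangle$, $i=1,2,3$. Each $\G_{2,i}+\ell$ yields an extra base cycle $B_i$ via Lemma \ref{+l}(1), so that together with $A$ one has four base cycles and \emph{eight} base suns $T_0,\ldots,T_7$. This is why the paper sets $w_1=(q-3)\ell$ rather than $q\ell$: only $q-3\leq 6$ one-factors are needed from $\G_1$'s mixed interval (which has $2r+2\geq 6$), so no sporadic case arises. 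The $(3,2)$ shape of $A$ together with the $B_i$'s supplies enough pure development cycles and mixed pendant edges to build all $2\nu+1$ auxiliary suns $G_1,\ldots,G_{2\nu+1}$ explicitly.
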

\begin{proof} Let $n = 2(q\ell + r) + \nu$ with $1\leq r\leq \ell$ and $\nu\in\{2,3\}$. Clearly,
$q = \lfloor\frac{n-4}{k-1}\rfloor$.
Also, we have that $q$ and $\ell\geq 3$ are odd,
and $n\equiv 0,1 \pmod 4$; hence $r$ is even.
Furthermore, we have that $2\leq q \leq 10$, since by assumption
$2k < n< 10k$.  Considering now the hypothesis that
$n\not\equiv 0,1 \pmod{2\ell}$, it follows that $r\neq \ell-1$. To sum up,
\begin{equation}\label{thm:holek=3,b1:cond1}
  \text{$q$ is odd with $3\leq q \leq 9$, and $r$ is even with $2\leq r\leq \ell-3$}.
\end{equation}
As before,
let $V(K_{4k}+n) = \big(\Z_{k}\times[0,3]\big)\cup\{\infty_h\mid h\in \Z_{n-\nu}\}
\cup \{\infty'_1,\infty'_2, \infty'_\nu\}$.

We start decomposing $K_{2k}$ into the following two graphs:
\begin{align*}
  \G_1 = \big\langle [4,\ell], [k-2r-1, k], [3,\ell] \big\rangle \;\;\mbox{and}\;\;
  \G_2 = \big\langle [1,3], [1,k-2r-2], [1,2]  \big\rangle.
\end{align*}
Considering that $3\leq q\leq9\leq 2r+5$, the graph $\G_1$ can be further decomposed into the following graphs:
\[
  \G_{1,1} =
  \big\langle [4,\ell], \varnothing, [3,\ell-1]  \big\rangle,  \hspace{.3cm}
  \G_{1,2} =
  \big\langle \varnothing, [k-2r-4 + q, k], \{ \ell \} \big\rangle,
\]
\[
  \text{and}\;\; \G_{1,3} =
  \big\langle  \varnothing, [k-2r-1, k-2r-5+q], \varnothing \big\rangle.
\]
The first two have a $k$-cycle system by Lemmas \ref{ab, 0, 0} and \ref{l, S S+1, 0}, while
$ \G_{1,3}$ decomposes into $(q-3)$ $1$-factors, say $J_1, J_2, \ldots, J_{q-3}$.
Letting $w_1 = (q-3)\ell$,
by Lemma \ref{trivial} we have that
\[
\G_1 + w_1 = \oplus_{i=1}^{q-3} (J_i + \ell) \oplus (\G_{1,1}\oplus \G_{1,2}).
\]
Therefore, $\G_1 + w_1$ has a $k$-cycle system, since each $J_i + \ell$ decomposes into $k$-cycles
by Lemma \ref{+l}. Setting $w_2 = n - 2w_1 = 2(3\ell+r) + \nu$ and recalling that $K_{2k} = \G_1 \oplus \G_2$,
by Lemma \ref{lemma:main} it is left to show that
$\G_2^*[2] + w_2$ has a $k$-sun system.

We start decomposing $\G_2$ into the following graphs:
\begin{align*}
  \G_{2,0} &= \big\langle [1,3], [1,k-2r-5], [1,2]  \big\rangle,  \;\; \text{and} \\
  \G_{2,i} &= \big\langle \varnothing, \{k-2r-5+i\}, \varnothing \big\rangle, \;\; \text{for $1\leq i\leq 3$}.
\end{align*}
Recalling that $\G^*_2[2]= \G_2[2] \oplus I$,
where $I$ denotes the $1$-factor $\big\{ \{z, \ov{z}\} \mid z\in \Z_k \times\{0,1\}\big\}$ of $K_{4k}$,
by Lemma \ref{trivial} we have that
\[
\G_2^*[2] + w_2 = \oplus_{i=1}^3 \big(\G_{2,i} + \ell \big)[2] \oplus (\G_{2,0} + r)[2] \oplus (I+\nu).
\]
By Lemmas \ref{+r} and \ref{+l} there exist a $k$-cycle $A=(x_1,x_2,x_3,y_4,y_5,y_6,a_7,\ldots,a_k)$ of $\G_{2,0} + r$,
a $k$-cycle $B_1=(x_{1,0},y_{1,1}, b_{1,2}, \ldots, b_{1,k-1})$ of $\G_{2,1} + \ell$,
and a $k$-cycle $B_i=(y_{i,0},x_{i,1}, b_{i,2}, \ldots, b_{i,k-1})$ of
$\G_{2,i} + \ell$, for $2\leq i\leq 3$, satisfying the following properties:

\begin{align}
  \begin{split}\label{thm:holek=3,b1:cond2}
    & \text{$Dev(\{x_1,x_2\})$ and $Dev(\{x_2,x_3\})$ are $k$-cycles with vertices in $\Z_k\times \{0\}$,} \\
    & \text{$Dev(\{y_4,y_5\})$ and $Dev(\{y_5,y_6\})$ are $k$-cycles with vertices in $\Z_k\times \{1\}$;}\\
  \end{split} \\
    \begin{split} \label{thm:holek=3,b1:cond3}
    & \text{$x_{1,0}, x_{2,1}, x_{3,1}\in  \Z_{k}\times \{0\}$,
      $y_{1,1}, y_{2,0}, y_{3,0}\in \Z_{k}\times \{1\}$}; \\
    \end{split}\\
    & \label{thm:holek=3,b1:cond4}
    \text{$\bigcup_{i=1}^3 Orb(B_i) \ \cup\ Orb(A)$ is a $k$-cycle system of $\G_2 +(3\ell+r)$.}
\end{align}

Set $A'=(x_1, \ov{x_2}, x_3, \ov{y_4}, y_5, \ov{y_6}, a_7, a_8, \ldots, a_{k-1}, a_k)$ and
let $\cS = \{\sigma(A'), \ov{\sigma(A')}\}\ \cup\
\{\sigma(B_i), \ov{\sigma(B_i)} \mid 1\leq i \leq 3\}$.
By Lemma \ref{from k-C to k-S}, we have that
$\bigcup_{S\in \cS} Orb(S)$ is a $k$-sun system of
$\big(\G_2 +(3\ell+r)\big)[2] = \G_2[2] +2(3\ell+r) = (\G_2^*[2] + w_2)\setminus (I+\nu)$.

To construct a $k$-sun system of $\G_2^*[2] + w_2$, we build a family $\cT=\{T_0,T_1,\ldots, T_7\}$ of $k$-suns
by modifying
the graphs in $\cS$ so that $\bigcup_{T\in \cT} Orb(T)$ covers all the edges incident with
$\infty'_1, \infty'_2$, and possibly $\infty'_3$ when $\nu=3$.
We then construct further
$(2\nu+1)$ $k$-suns $G_1, G_2, \ldots, G_{2\nu +1}$ which cover the remaining edges exactly once. Hence,
$\bigcup_{T\in \cT} Orb(T) \cup \{G_1, G_2, \ldots, G_{2\nu +1}\}$ is a
$k$-sun system of $\G_2^*[2] + w_2$.

The graphs $T_0, \ldots, T_7$ and $G_1, \ldots, G_{2\nu +1}$
are the following, where as before the elements in bold are the replaced vertices.
\begingroup
\allowdisplaybreaks
\begin{align*}
T_0 &=
\begin{cases}
  \left(
  \begin{matrix}
  x_1  &  \ov{x_2}  &  x_3  &  \ov{y_4} &  y_5 &  \ov{y_6}  &  a_7      & \ldots & a_{k-1}     & a_k\\
        x_2  &  \bm{\infty'_1} &  y_4  & \bm{\infty'_2} &  y_6 &  \ov{a_7}  &  \ov{a_8} & \ldots & \ov{a_{k}}  & \ov{x_1}
  \end{matrix}
  \right) &
  \mbox{if $\nu=2$},\\
  \left(
  \begin{matrix}
       x_1  &  \ov{x_2}  &  x_3  &  \ov{y_4} &  y_5 &  \ov{y_6}  &  a_7      & \ldots & a_{k-1}     & a_k\\
  \bm{\infty'_3}  &  \bm{\infty'_1} &  y_4  & \bm{\infty'_2} &  y_6 &  \ov{a_7}  &  \ov{a_8} & \ldots & \ov{a_{k}}  & \ov{x_1}
  \end{matrix}
  \right) &
  \mbox{if $\nu=3$},
\end{cases}\\
T_1 &=
\begin{cases}
  \left(
  \begin{matrix}
 \ov{x_1} &       x_2  &  \ov{x_3} &        y_4 &  \ov{y_5} &  y_6  &  \ov{a_7} & \ldots & \ov{a_{k-1}}  & \ov{a_k}\\
    \ov{x_2} & \bm{\infty'_1}  &  \ov{y_4} &  \bm{\infty'_2} &      \bm{y_5}  &  a_7  &       a_8 & \ldots &     a_{k}    & x_1
  \end{matrix}
  \right) &
  \mbox{if $\nu=2$},\\
  \left(
  \begin{matrix}
   \ov{x_1} &       x_2  &  \ov{x_3} &       y_4 &  \ov{y_5} &  y_6  &  \ov{a_7} & \ldots & \ov{a_{k-1}}  & \ov{a_k}\\
   \bm{\infty'_3} & \bm{\infty'_1}  &  \ov{y_4} &  \bm{\infty'_2} &    \bm{y_5}  &  a_7  &       a_8 & \ldots &     a_{k}    & x_1
  \end{matrix}
  \right) &
  \mbox{if $\nu=3$},
\end{cases}\\
T_{2} &=
  \left(
  \begin{matrix}
         x_{1,0} &    y_{1,1}    &     b_{1,2}  &    \ldots &     b_{1,k-2}    & b_{1,k-1}\\
       \bm{\infty'_2} & \ov{b_{1,2}}  & \ov{b_{1,3}} &    \ldots & \ov{b_{1,k-1}}   & \ov{x_{1,0}}\\
  \end{matrix}
  \right),\\
T_{3} &=
 \left(
  \begin{matrix}
        \ov{x_{1,0}} &   \ov{y_{1,1}}    &     \ov{b_{1,2}}  &    \ldots &    \ov{b_{1,k-2}}    & \ov{b_{1,k-1}}\\
       \bm{\infty'_2} & b_{1,2}  & b_{1,3} &    \ldots & b_{1,k-1}   & x_{1,0}\\
  \end{matrix}
  \right),\\
T_{4} &=
  \left(
  \begin{matrix}
         y_{2,0} &    x_{2,1}    &     b_{2,2}  &    \ldots &     b_{2,k-2}    & b_{2,k-1}\\
       \bm{\infty'_1} & \ov{b_{2,2}}  & \ov{b_{2,3}} &    \ldots & \ov{b_{2,k-1}}   & \ov{y_{2,0}}\\
  \end{matrix}
  \right),\\
T_{5} &=
  \left(
  \begin{matrix}
         \ov{y_{2,0}} &   \ov{ x_{2,1}}    &   \ov{  b_{2,2}}  &    \ldots &    \ov{ b_{2,k-2}}    & \ov{b_{2,k-1}}\\
        \bm{\infty'_1} & b_{2,2} & b_{2,3} &    \ldots & b_{2,k-1}   & y_{2,0}\\
  \end{matrix}
  \right),\\
T_6 &=
\begin{cases}
  \sigma(B_3) &
  \mbox{if $\nu=2$},\\
  \left(
  \begin{matrix}
  y_{3,0} &    x_{3,1}      &     b_{3,2}  &    \ldots &     b_{3,k-2}  & b_{3,k-1}\\
    \bm{\infty'_3} &  \ov{b_{3,2}}    & \ov{b_{3,3}} &    \ldots & \ov{b_{3,k-1}}   & \ov{y_{3,0}}\\
  \end{matrix}
  \right) &
  \mbox{if $\nu=3$},
\end{cases}\\
T_7 &=
\begin{cases}
  \ov{\sigma(B_3)} &
  \mbox{if $\nu=2$},\\
  \left(
  \begin{matrix}
  \ov{y_{3,0}} &    \ov{x_{3,1}}      &     \ov{b_{3,2}}  &    \ldots &     \ov{b_{3,k-2}}  & \ov{b_{3,k-1}}\\
  \bm{\infty'_3}  &   b_{3,2}   & b_{3,3} &    \ldots & b_{3,k-1}   & y_{3,0}\\
  \end{matrix}
  \right) &
  \mbox{if $\nu=3$},
\end{cases}
\end{align*}
\endgroup
%
%
\[
\begin{array}{ll}
  G_1 = Dev\big(x_2 \sim x_3 \sim \ov{x_3} \big),
&
  G_2 = Dev\big(\{\ov{x_2}, \ov{x_3}\} \oplus \{\ov{x_{1,0}}, y_{1,1}\} \big),
\\[1ex] \rule{0pt}{1\normalbaselineskip}
  G_3 = Dev\big(\{y_4, y_5\} \oplus \{y_{2,0}, \ov{x_{2,1}}\} \big),
&
  G_4 = Dev\big(\{\ov{y_4}, \ov{y_5}\} \oplus \{\ov{y_{2,0}}, x_{2,1}\} \big),
\\[1ex] \rule{0pt}{1\normalbaselineskip}
  G_5 =  Dev\big(\{\ov{y_5}, \ov{y_6}\} \oplus \{\ov{y_{1,1}}, x_{1,0}\} \big),
&
  G_6 = Dev\big(\{x_1, x_2\} \oplus \{x_{3,1}, \ov{y_{3,0}}\} \big),
\\[1ex] \rule{0pt}{1\normalbaselineskip}
  G_7 = Dev\big(\{\ov{x_1}, \ov{x_2}\} \oplus \{\ov{x_{3,1}}, {y_{3,0}}\} \big).
\end{array}
\]
By recalling \eqref{thm:holek=3,b1:cond2}--\eqref{thm:holek=3,b1:cond4}, it is not difficult to check that the graphs $G_h$ are $k$-suns.
\end{proof}

\begin{thm}\label{thm:holek=3,b2}
Let $k\equiv 3\pmod 4\geq 7$ and $n\equiv 0,1 \pmod 4$ with  $2k < n < 10k$.
If $\left\lfloor\frac{n-4}{k-1}\right\rfloor$ is odd, and $n\equiv 0,1 \pmod{k-1}$, then there is a
$k$-sun system of $K_{4k}+n$ except possibly when $(k, n) \in \{(11, 100), (11, 101)\}$.
\end{thm}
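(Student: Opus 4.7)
My plan is to follow the template already used for Theorems \ref{thm:holek=3, b1} and \ref{thm:holek=3,a}. I would first parse the arithmetic hypotheses. Writing $n=2(q\ell+r)+\nu$ with $1\leq r\leq\ell$ and $\nu\in\{2,3\}$ fixed by the parity of $n$, one verifies that $q=\lfloor(n-4)/(k-1)\rfloor$; the hypothesis that this $q$ is odd, combined with $n\equiv 0,1\pmod{k-1}$, uniquely pins down $r=\ell-1$, and the range $2k<n<10k$ then forces $q\in\{3,5,7,9\}$. The excluded pair $(k,n)\in\{(11,100),(11,101)\}$ is precisely the extremal case $(\ell,q)=(5,9)$.

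The next step is the Lemma \ref{lemma:main} reduction: I would split $K_{2k}=\G_1\oplus\G_2$ so that $\G_1+w_1$ admits a $k$-cycle system and $\G_2^*[2]+w_2$ admits a $k$-sun system, with $w_2=n-2w_1$. Inspired by the construction in Theorem \ref{thm:holek=3,a}, I would place the large side-differences $[3,\ell]$ (or $[4,\ell]$) on both sides into $\G_1$, together with the bulk of the middle differences, and keep the small side-differences along with the residual middle in $\G_2$. Choosing $w_1$ as an appropriate multiple of $\ell$ (either $q\ell$, $(q-1)\ell$, or $(q-3)\ell$ depending on how many $1$-factors I extract), I would further decompose $\G_1$ into $1$-factors handled by Lemma \ref{+l}, a central block $\langle D,\{0\},D\rangle$ split into $1$-factors by Lemma \ref{HLR:1factorization}, a Cayley piece covered by Lemma \ref{ab, 0, 0}, and a piece of the form $\langle\{\ell\},S\cup(S+1),\varnothing\rangle$ covered by Lemma \ref{l, S S+1, 0}, thereby securing the required $k$-cycle system of $\G_1+w_1$.

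With $w_2=n-2w_1$ now a small quantity (of order $2(\ell-1)+\nu$ plus any middle difference $\G_2$ inherits), I would construct the $k$-sun system of $\G_2^*[2]+w_2$ as follows. I split $\G_2$ into a central piece $\G_{2,0}$ and, if needed, a few singleton middle pieces $\G_{2,i}$; invoke Lemmas \ref{+r} and \ref{+l} to produce base $k$-cycles with the pendant $Dev$-properties analogous to \eqref{thm:holek=3,b1:cond2}--\eqref{thm:holek=3,b1:cond4}; and convert these through the $\sigma$-map into base $k$-suns. As in Theorems \ref{thm:holek=3, b1} and \ref{thm:holek=3,a}, I would then replace suitably chosen vertices of the $\sigma$-images by $\infty'_1,\infty'_2$ (and $\infty'_3$ if $\nu=3$) to produce modified $k$-suns $T_i$ whose $\Z_k$-orbits cover every edge at an extra infinity exactly once, and reassemble the ``lost'' edges together with the diagonal $1$-factor $I$ into $2\nu+1$ auxiliary $k$-suns $G_1,\ldots,G_{2\nu+1}$.

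The main obstacle is the intricate edge-accounting for the explicit $T_i$ and $G_j$. Because $r=\ell-1$ collapses the middle interval $[1,k-2r-2]$ of the natural $\G_2$ from Theorem \ref{thm:holek=3, b1} down to $\{1\}$, the four-base-cycle construction of that theorem does not fit directly, and one is forced into a hybrid closer in spirit to the two-base-cycle construction of Theorem \ref{thm:holek=3,a}. The excluded pair $(k,n)=(11,100),(11,101)$ should correspond, in analogy with the pair $(11,112),(11,113)$ excluded from Theorem \ref{thm:holek=3,a}, to the single extremal choice $(\ell,q)=(5,9)$ at which the intended decomposition of $\G_1$ into Cayley pieces and $1$-factorised central blocks degenerates below the threshold needed to match the chosen $w_1$ budget.
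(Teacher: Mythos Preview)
Your plan is essentially the paper's approach: the arithmetic parsing, the reduction via Lemma~\ref{lemma:main}, the two-base-cycle construction for $\G_2$ in the spirit of Theorem~\ref{thm:holek=3,a}, the $T_i$/$G_j$ modification scheme, and the identification of $(\ell,q)=(5,9)$ as the degenerate case are all exactly what the paper does.

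Two specifics worth noting where the paper commits and you remained vague. First, the split of middle differences is the \emph{opposite} of what you suggest: the paper places only $\{0\}$ in $\G_1$ (so $\G_1=\langle[3,\ell],\{0\},[3,\ell]\rangle$, handled entirely by Lemma~\ref{HLR:1factorization} plus Lemma~\ref{ab, 0, 0}, with no need for Lemma~\ref{l, S S+1, 0}), and the entire nonzero middle $[1,k-1]$ goes into $\G_2$. Second, for $\G_2$ the paper does not use singleton middle pieces with Lemma~\ref{+l}; instead it writes $r=\ell-1=r_1+r_2$ as a sum of two \emph{odd} positive integers and applies Lemma~\ref{+r} twice, once with $\epsilon=0$ (giving $\G_{2,1}=\langle\{1\},[1,k-2r_1-2],\{1\}\rangle$) and once with $\epsilon=1$ (giving $\G_{2,2}=\langle\{2\},[k-2r_1-1,k-1],\{2\}\rangle$). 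This odd/odd split is what makes the parity condition $r_i\not\equiv s+s'\pmod 2$ in Lemma~\ref{+r} work out for $s=s'\in\{1,2\}$, and it is the small idea you would still need to discover to close the argument.
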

\begin{proof} Let $n = 2(q\ell + r) + \nu$ with $1\leq r\leq \ell$ and $\nu\in\{2,3\}$.
Reasoning as in the proof of Theorem \ref{thm:holek=3, b1} and
considering that $n\equiv 0,1 \pmod{2\ell}$ and $(k, n) \not\in \{(11, 100), (11, 101)\}$,
we have that
\begin{equation}\label{thm:holek=3,b2:cond1}
  \text{$q$ is odd with $3\leq q \leq 9$, $r = \ell-1\geq 2$, $r$ is even, and $(\ell, q)\neq (5,9)$}.
\end{equation}
As before,
let $V(K_{4k}+n) = \big(\Z_{k}\times[0,3]\big)\cup\{\infty_h\mid h\in \Z_{n-\nu}\}
\cup \{\infty'_1,\infty'_2, \infty'_\nu\}$.

We start decomposing $K_{2k}$ into the following two graphs
\[
  \G_1 =
  \left\langle \left[3, \ell\right], \{0\}, \left[3,  \ell\right] \right\rangle,\;\;\mbox{and}\;\;
  \G_2 = \big\langle \{1,2\}, [1, k-1],  \{1,2\} \big\rangle.
\]
Considering \eqref{thm:holek=3,b2:cond1},
we can further decompose $\G_1$ into the following two graphs:
\begin{align*}
  \G_{1,1} &=
  \left\langle \left[3, \frac{q+3}{2}\right], \{0\}, \left[3,  \frac{q+3}{2}\right] \right\rangle,\;\;
  \G_{1,2} =
  \left\langle \left[ \frac{q+5}{2}, \ell\right], \varnothing, \left[ \frac{q+5}{2}, \ell\right] \right\rangle.
\end{align*}
By Lemma \ref{HLR:1factorization}, the graph $\G_{1,1}$ decomposes into $q$ $1$-factors,
say $J_1, J_2, \ldots, J_{q}$. Letting $w_1 = q\ell$, by Lemma \ref{trivial} we have that
\[
\G_1 + w_1 =( \G_{1,1}+w_1) \oplus \G_{1,2} = \oplus_{i=1}^{q} (J_i + \ell) \oplus \G_{1,2}.
\]
Lemmas \ref{+l} and \ref{ab, 0, 0} guarantee that each  $J_i + \ell$ and $\G_{1,2}$ decompose into $k$-cycles,
hence $\G_1 + w_1$ has a $k$-cycle system.
Let $r_1$ and $r_2$ be odd positive integers such that $r=\ell-1 = r_1 + r_2$.
Then, setting $w_2 = n - 2w_1 = 2(r_1 + r_2) + \nu$ and recalling that $K_{2k} = \G_1 \oplus \G_2$,
by Lemma \ref{lemma:main} it is left to show that
$\G_2^*[2] + w_2$ has a $k$-sun system.

We start decomposing $\G_2$ into the following graphs:
\begin{align*}
  \G_{2,1} &= \big\langle \{1\}, [1, k-2r_1-2],  \{1\} \big\rangle \;\;\mbox{and}\;\;
  \G_{2,2}  = \big\langle \{2\}, [k-2r_1-1,k-1], \{2\} \big\rangle.
\end{align*}
Recalling that $\G^*_2[2]= \G_2[2] \oplus I$,
where $I$ denotes the $1$-factor $\big\{ \{z, \ov{z}\} \mid z\in \Z_k \times\{0,1\}\big\}$ of $K_{4k}$,
by Lemma \ref{trivial} we have that
\begin{equation}\label{thm:holek=3,b2:cond2}
\G_2^*[2] + w_2 =  \big(\G_{2,1} + r_1 \big)[2] \oplus (\G_{2,2} + r_2)[2] \oplus (I+\nu).
\end{equation}

By Lemma \ref{+r} there is a $k$-cycle $A= (y_{1}, y_{2}, x_{3}, x_{4}, a_{5}, \ldots, a_{k})$ of $\G_{2,1} + r_1$ and
a $k$-cycle 
$B= (x_{1}, x_{2}, y_{3}, y_{4}, b_{5}, \ldots, b_{k})$ of $\G_{2,2} + r_2$ such that
\begin{align*}
& \text{$Orb(A)\cup Orb(B)$ is a $k$-cycle system of $\G_{2}+r$,}\\
& \text{$Dev(\{x_{3},x_{4}\})$ and $Dev(\{x_{1},x_{2}\})$ are $k$-cycles with vertices in $\Z_k\times\{0\}$,}\\
& \text{$Dev(\{y_{1},y_{2}\})$ and $Dev(\{y_{3},y_{4}\})$ are $k$-cycles with vertices in $\Z_k\times\{1\}$. }
\end{align*}
Set $A'=(y_{1}, \ov{y_{2}}, x_{3}, \ov{x_{4}}, a_{5},  \ldots, a_{k})$,
$B'=(x_{1}, \ov{x_{2}}, \ov{y_{3}}, y_{4}, b_{5}, \ldots, b_{k})$ and
 let
$\cS = \{\sigma(A'),\ov{\sigma(A')}, \sigma(B'), \ov{\sigma(B')}\}$.
By Lemma \ref{from k-C to k-S}, we have that
$\bigcup_{S\in \cS} Orb(S)$ is a $k$-sun system of
$(\G_2^*[2] + w_2)\setminus (I+\nu)$.

To construct a $k$-sun system of $\G_2^*[2] + w_2$, we build a family $\cT=\{T_1, T_2, T_3, T_4\}$ of
four $k$-suns, each of which is obtained from a graph in $\cS$ by replacing some of their vertices
with $\infty'_1, \infty'_2$, and possibly $\infty'_3$ when $\nu=3$.
Then we construct further
$(2\nu+1)$ $k$-suns $G_1, G_2, \ldots, G_{2\nu +1}$ so that
$\bigcup_{T\in \cT} Orb(T)\ \cup\ \{G_1, G_2, \ldots, G_{2\nu +1}\}$ is a
$k$-sun system of $\G_2^*[2] + w_2$.
\begingroup
\allowdisplaybreaks
\begin{align*}
T_1 &=
\begin{cases}
  \left(
  \begin{matrix}
  y_{1}          &  \ov{y_{2}}    &  x_3   &  \ov{x_{4}} & a_{5}    &  \ldots  & a_{k-1}   & a_{k}\\
  \bm{\infty'_1} & \bm{\infty'_2} &  x_{4} &  \ov{a_5}   & \ov{a_6} &  \ldots  & \ov{a_k}  &  \ov{y_{1}}
  \end{matrix}
  \right) &
  \mbox{if $\nu=2$},\\
  \left(
  \begin{matrix}
  y_{1} &  \ov{y_{2}}  &  \bm{\infty'_3}  &  \ov{x_{4}} & a_{5} &  \ldots  & a_{k-1} & a_{k}\\
\bm{\infty'_1} & \bm{\infty'_2} &  x_{4} & \ov{a_5}   &     \ov{a_6} &  \ldots &\ov{a_k}  &  \ov{y_{1}}
  \end{matrix}
  \right) &
  \mbox{if $\nu=3$},
\end{cases}\\
T_2 &=
  \left(
  \begin{matrix}
  \ov{y_{1}} &  y_{2}  &  \ov{x_{3}}  &  x_{4} & \ov{a_{5}} &  \ldots  & \ov{a_{k-1}} & \ov{a_{k}}\\
\bm{\infty'_1} & \bm{\infty'_2} &  \ov{x_{4}} & a_5   &    a_6 &  \ldots & a_k  &  y_{1}
  \end{matrix}
  \right), \\
T_3 &=
  \left(
  \begin{matrix}
     x_{1} &  \ov{x_{2}} &  \ov{y_{3}}     &  y_{4} &  b_{5} &  \ldots & b_{k-1}  & b_{k}\\
   \bm{\infty'_1}  &      \bm{\infty'_2} & \bm{y_{3}} &     \ov{b_{5}} &       \ov{b_{6}} & \ldots  &    \ov{b_{k}}     & \ov{x_{1}}
  \end{matrix}
  \right),\\
T_4 &=
\begin{cases}
  \left(
  \begin{matrix}
\ov{x_{1}} &  {x_{2}}  &  y_{3} &    \ov{y_{4}} &  \ov{b_{5}}    & \ldots & \ov{b_{k-1}}   & \ov{b_{k}}\\
        \bm{\infty'_1}  &  \bm{\infty'_2}   & y_{4}  & b_{5} &  b_{6} & \ldots & b_{k} & {x_{1}}
  \end{matrix}
  \right) &
  \mbox{if $\nu=2$},\\
  \left(
  \begin{matrix}
\ov{x_{1}} &  {x_{2}}  &  y_{3}  &    \ov{y_{4}} &  \ov{b_{5}}    & \ldots & \ov{b_{k-1}}   & \ov{b_{k}}\\
        \bm{\infty'_1}  & \bm{\infty'_2}    &  \bm{\infty'_3}   & b_5  &  b_{6} & \ldots & b_{k} & {x_{1}}
  \end{matrix}
  \right) &
  \mbox{if $\nu=3$},
\end{cases}
\end{align*}
\endgroup
\[
\begin{array}{ll}
  G_1 = Dev\big(y_{1} \sim y_{2} \sim x_{3} \big),
&
  G_2 = Dev\big(\ov{y_{1}} \sim \ov{y_{2}} \sim \ov{x_{3}} \big),
 \\[1ex] \rule{0pt}{1\normalbaselineskip}
  G_3 = Dev\big(\ov{y_{4}} \sim \ov{y_{3}} \sim x_{2} \big),
&
  G_4 = Dev\big( x_{1} \sim x_{2} \sim \ov{x_{2}} \big),
  \\[1ex] \rule{0pt}{1\normalbaselineskip}
  G_5 =
  \begin{cases}
     Dev\big( \ov{x_{1}} \sim \ov{x_{2}} \sim y_3 \big) & \text{if $\nu=2$},\\
     Dev\big( \{\ov{x_{1}},\ov{x_{2}}\} \oplus \{\ov{x_{4}}, x_3 \} \big) & \text{if $\nu=3$},\\
  \end{cases}
&
  G_6 = Dev\big( y_4 \sim y_3 \sim \ov{x_{2}} \big),
  \\[1ex] \rule{0pt}{1\normalbaselineskip}
  G_7 = Dev\big( x_4 \sim x_3 \sim \ov{y_{2}} \big).
&
\end{array}
\]
By \eqref{thm:holek=3,b2:cond2}, it is not difficult to check that
the graphs $G_h$ are $k$-suns.
\end{proof}

\section{It is sufficient to solve $2k < v < 6k$}
In this section we show that if the necessary conditions in \eqref{nec}, for the existence of
a $k$-sun system of $K_v$, are sufficient for all $v$ satisfying $2k < v < 6k$, then they are sufficient for all $v$. In other words, we prove Theorem \ref{main1}.

We start by showing how to construct $k$-sun systems of $K_{g \times h}$
(i.e., the complete multipartite graph with $g$ parts each of size $h$) when $h=4k$.

\begin{thm}\label{thm:fiber}
For any odd integer $k\geq 3$ and any integer $g\geq 3$,
there exists a $k$-sun system of $K_{g \times 4k}$.
\end{thm}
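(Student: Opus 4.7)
The plan is to obtain a $k$-sun system of $K_{g\times 4k}$ by doubling a $k$-cycle system of $K_{g\times 2k}$, exploiting the construction of Section~4. First I would label $V(K_{g\times 2k})$ as $\Z_{gk}\times\{0,1\}$ so that each of the $g$ parts of size $2k$ consists of $k$ consecutive rows coupled with both column indices. With this labeling the operation $\Gamma\mapsto\Gamma[2]$ of Section~4 sends $K_{g\times 2k}$ onto $K_{g\times 4k}$: each part of size $2k$ is blown up to a set of $4k$ vertices consisting of $2k$ non-adjacent twin pairs, and every edge between distinct parts is quadrupled in the expected way. Consequently, Lemma~\ref{from k-C to k-S} converts any $k$-cycle decomposition $\mathcal{C}$ of $K_{g\times 2k}$ into the $k$-sun decomposition $\{\sigma(C),\overline{\sigma(C)}\mid C\in\mathcal{C}\}$ of $K_{g\times 4k}$.

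It therefore suffices to construct a $k$-cycle system of $K_{g\times 2k}$ for every $g\geq 3$ and odd $k\geq 3$. The obvious necessary conditions hold automatically, since $2k$ is even: every vertex has even degree $(g-1)\cdot 2k$, and the edge count $\binom{g}{2}(2k)^2$ is a multiple of $k$. When $g$ is odd I would invoke Proposition~\ref{mixed diff method} with $G=\Z_g$ and $m=2k$, identifying $V(K_{g\times 2k})$ with $\Z_g\times[0,2k-1]$ so that the parts are the horizontal slices $\{a\}\times[0,2k-1]$, and build a base family $\mathcal{F}$ of $k$-cycles with $\Delta_{ij}\mathcal{F}=\Z_g\setminus\{0\}$ for every $0\leq i\leq j<2k$. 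When $g$ is even I would run the same argument with $G=\Z_{g-1}$ and $w=2k$ infinity vertices playing the role of the last part, so that Proposition~\ref{mixed diff method} yields a $k$-cycle decomposition of $K_{(g-1)\times 2k}+2k = K_{g\times 2k}$, subject to the additional constraint that each infinity be visited exactly once at each level.

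The main obstacle will be the explicit construction of the base family $\mathcal{F}$: one must exhibit $k$-cycles whose $(i,j)$-differences collectively realize each nonzero element of $G$ exactly once for every pair $0\leq i\leq j<2k$, and in the even-$g$ case also correctly distribute the $2k$ infinities across the $2k$ levels. This is essentially a starter-type combinatorial problem whose resolution will require a case analysis on the parity of $g$ and on its residue modulo small numbers, in the spirit of the arguments in Sections~3--4; the flexibility afforded by having $2k$ levels (rather than only two) should make the construction tractable. Once $\mathcal{F}$ is in hand, Lemma~\ref{from k-C to k-S} immediately delivers the required $k$-sun system of $K_{g\times 4k}$.
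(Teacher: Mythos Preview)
Your reduction is exactly the paper's: identify $V(K_{g\times 2k})$ with $\Z_{gk}\times\{0,1\}$, observe that $K_{g\times 4k}=K_{g\times 2k}[2]$, and apply Lemma~\ref{from k-C to k-S} with $u=0$ to turn any $k$-cycle system of $K_{g\times 2k}$ into the desired $k$-sun system.

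Where you diverge is in the second half. The ``main obstacle'' you identify---the existence of a $k$-cycle system of $K_{g\times 2k}$ for every $g\geq 3$ and odd $k\geq 3$---is not an obstacle at all: it is precisely \cite[Theorem~2]{HLR}, the very Hoffman--Lindner--Rodger paper whose reduction method underlies this entire article. The paper's proof is therefore two lines: cite \cite{HLR} for the cycle system, then invoke Lemma~\ref{from k-C to k-S}. Your proposed difference-method construction over $\Z_g$ (or over $\Z_{g-1}$ with $2k$ infinities when $g$ is even) could in principle be carried out, but it is entirely unnecessary here, and completing it as you sketch would be considerably more work than the one-line citation it is meant to replace.
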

\begin{proof}
Set $V(K_{g\times 2k})=\Z_{gk}\times [0,1]$ and let
$K_{g\times 4k} = K_{g\times 2k}[2]$.
In \cite[Theorem 2]{HLR} the authors  proved the existence of a $k$-cycle system of $K_{g \times 2k}$.
By applying Lemma \ref{from k-C to k-S} (with $\G = K_{g\times 2k}$ and $u=0$)
we obtain the existence of a $k$-sun system of $K_{g \times 4k}$.
\end{proof}

The following result exploits Theorem \ref{thm:fiber} and shows how to construct
$k$-sun systems of $K_{4kg+n}$, for $g\neq 2$, starting from a $k$-sun system of $K_{4k} + n$ and a $k$-sun system of
either $K_{n}$ or $K_{4k+n}$.

\begin{thm}\label{thm:n+4kg}
Let $k\geq3$ be an odd integer and assume that both the following conditions hold:
\begin{enumerate}
  \item there exists a $k$-sun system of either $K_{n}$ or $K_{4k+n}$;
  \item there exists a $k$-sun system of $K_{4k}+n$.
\end{enumerate}
Then there is a $k$-sun system of  $K_{4kg+n}$ for all positive $g\neq 2$.
\end{thm}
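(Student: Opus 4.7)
My plan is to decompose the edge set of $K_{4kg+n}$ into pieces, each of which is one of the three graph types whose $k$-sun decomposability we already know: the complete multipartite graph $K_{g\times 4k}$ (handled by Theorem \ref{thm:fiber} whenever $g\geq 3$), the graph $K_{4k}+n$ (condition (2)), and either $K_n$ or $K_{4k+n}$ (the two alternatives in condition (1)).

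First I would dispose of $g=1$: one has $K_{4k+n}=(K_{4k}+n)\oplus K_n$, obtained by partitioning the vertex set into a block of size $4k$ and a block of size $n$ and sorting edges by which block(s) they touch. If condition (1) directly furnishes a $k$-sun system of $K_{4k+n}$ we are done; otherwise it provides one of $K_n$, which together with a $k$-sun system of $K_{4k}+n$ (condition (2)) yields one of $K_{4k+n}$.

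For $g\geq 3$ I would partition $V(K_{4kg+n})=V_1\sqcup V_2\sqcup\cdots\sqcup V_g\sqcup W$ with $|V_i|=4k$ and $|W|=n$, and let $\Gamma_i$ be the spanning subgraph on $V_i\cup W$ whose edges are those of $K_{V_i}$ together with those of $K_{V_i,W}$, so that $\Gamma_i\cong K_{4k}+n$. A direct check on vertex pairs gives the edge-disjoint decomposition
\[
K_{4kg+n}\;=\;K_{g\times 4k}\ \oplus\ \bigoplus_{i=1}^{g}\Gamma_i\ \oplus\ K_W,
\]
where $K_{g\times 4k}$ has parts $V_1,\ldots,V_g$ and $K_W$ is the complete graph on $W$. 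Theorem \ref{thm:fiber} decomposes the first summand into $k$-suns (this is exactly where $g\geq 3$ is needed), condition (2) decomposes each $\Gamma_i$, and condition (1) takes care of $K_W\cong K_n$. If instead condition (1) is given in the second form, I would fuse $\Gamma_g$ with $K_W$: the identity $\Gamma_g\oplus K_W$ is the complete graph on $V_g\cup W$, i.e. $K_{4k+n}$, so the decomposition
\[
K_{4kg+n}\;=\;K_{g\times 4k}\ \oplus\ \bigoplus_{i=1}^{g-1}\Gamma_i\ \oplus\ K_{V_g\cup W}
\]
does the job with the $K_{4k+n}$ summand handled by condition (1).

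There is no real obstacle in this argument; the only delicate point is the exclusion of $g=2$, which is forced by the fact that Theorem \ref{thm:fiber} requires $g\geq 3$ (no $k$-sun system of the complete bipartite graph $K_{4k,4k}$ is available to us), so the multipartite ingredient is unavailable and the whole scheme collapses when exactly two blocks of size $4k$ are present.
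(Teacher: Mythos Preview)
Your proof is correct and follows essentially the same approach as the paper: partition the vertex set into $g$ blocks of size $4k$ and one block of size $n$, then split the edge set into $K_{g\times 4k}$ (handled by Theorem \ref{thm:fiber} for $g\geq 3$), $g$ copies of $K_{4k}+n$ (condition (2)), and a residual $K_n$ or $K_{4k+n}$ (condition (1)), with the $g=1$ case handled by the identity $K_{4k+n}=(K_{4k}+n)\oplus K_n$. The only cosmetic difference is notation.
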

\begin{proof}
Suppose there exists a $k$-sun system $\cS_1$ of  $K_n$, also, by (2), there exists a $k$-sun system $\cS_2$ of $K_{4k}+n$. Clearly, $\cS_1\ \cup\ \cS_2$ is a $k$-sun system of
$K_{n + 4k} = K_{n}\oplus (K_{4k}+n)$. Hence we can suppose $g\geq3$.
Let $V$, $H$ and $G$ be sets of size $n$, $4k$ and $g$, respectively, such that $V\cap (H\times G)=\emptyset$.
Let $\mathcal S$ be a $k$-sun system of $K_{n}$ (resp., $K_{n+4k}$)
with vertex set $V$ (resp., $V \cup (H\times \{x_0\})$ for some $x_0\in G$).
By assumption, for each $x\in G$, there is a $k$-sun system, say ${\mathcal B}_x$,  of $K_{4k}+n$
with vertex set $V\cup (H\times \{x\})$, where $V(K_{4k}) = H\times \{x\}$.
Also, by Theorem \ref{thm:fiber} there is a $k$-sun system $\mathcal C$ of $K_{g \times 4k}$ whose parts are
$H\times \{x\}$ with $x\in G$.
Hence the $k$-suns of ${\mathcal B}_x$ with $x\in G$ (resp., $x \in G\setminus\{x_0\}$), $\mathcal S$ and
$\mathcal C$ form a $k$-sun system of $K_{n+4kg}$ with vertex set $V\cup (H\times G)$.
\end{proof}

We are now ready to prove Theorem \ref{main1} whose statement is recalled below.\\

\noindent
\textbf{Theorem 1.1.}
\emph{
Let $k\geq 3$ be an odd integer and $v>1$. Conjecture \ref{conj} is true if and only if there exists a $k$-sun system of $K_v$ for all $v$ satisfying the necessary conditions in \eqref{nec} with $2k< v <6k$.}
\begin{proof}
The existence of $3$-sun systems and $5$-sun systems has been solved in \cite{FJLS} and in \cite{FHL}, respectively. Hence we can suppose $k\geq 7$ and $2k< v < 6k$.

We first deal with the case where $(k,v)\neq(7,21)$.
By assumption there exists a $k$-sun system of $K_v$,
which implies $v(v-1)\equiv 0 \pmod 4$, hence
Theorem \ref{thm:hole} guarantees the existence of
a $k$-sun system of $K_{4k} + v$.
Therefore, by Theorem \ref{thm:n+4kg} there is
a $k$-sun decomposition of $K_{4kg + v}$ whenever $g\neq 2$.
To decompose $K_{8k + v}$ into $k$-suns, we first decompose
 $K_{8k + v}$ into $K_{4k + v}$ and $K_{4k} + (4k + v)$.
 By Theorem \ref{thm:n+4kg} (with $g=1$), there is a $k$-sun system of $K_{4k + v}$.
 Furthermore, Theorem \ref{thm:hole} guarantees the existence of
 a $k$-sun system of $K_{4k} + (4k + v)$, except possibly when
 $(k,4k + v)\in \{(7,56),(7,57),(7,64),(11,100)\}$.
 Therefore, by Theorem \ref{thm:n+4kg},
 there is a $k$-sun decomposition of $K_{8k + v}$
 whenever $(k,4k + v)\not\in \{(7,56),(7,57),(7,64),(11,100)\}$.
 For each of these four cases we construct $k$-sun systems of $K_{8k+v}$ as follows.

  If $k=7$ and $4k+v=56$, set $V(K_{84})= \Z_{83}\cup\{\infty\}$. We consider the following $7$-suns
  \begin{align*}
  T_1 &=
  \left(
  \begin{matrix}
 0 & -1 & 3 & -4 & 6 & -7 & 16\\
 31 & 27 & 37 & 18 & 43 & 12 & 56
  \end{matrix}
  \right), \\
    T_2 &=
  \left(
  \begin{matrix}
  0 & -2 & 3 & -5 & 6 & -8 & 17\\
  32 & 27 & 38 & 19 & 44 & 12 & 58
  \end{matrix}
  \right), \\
      T_3 &=
  \left(
  \begin{matrix}
 0 & -3 & 3 & -6 & 6 & -9 & 18\\
 33 & 27& 39 & 20 & 45 & 12  & \infty
  \end{matrix}
  \right).
  \end{align*}
  One can easily check that $\bigcup_{i=1}^{3} Orb_{\Z_{83}}(T_i)$  is a $7$-sun system of  $K_{84}$.

  If $k=7$ and $4k+v=57$, set $V(K_{85})= \Z_{85}$. Let $T_1$ and $T_2$ be defined as above, and let $T'_3$ be the graph obtained from $T_3$
  replacing $\infty$ with $60$. It is immediate that $\bigcup_{i=1}^2 Orb_{\Z_{85}}(T_i)\cup Orb_{\Z_{85}}(T'_3)$ is a $7$-sun system of $K_{85}$.

  If $k=7$ and $4k+v=64$, set $V(K_{92})=(\Z_7\times \Z_{13})\cup\{\infty\}$. We consider the following $7$-suns
  \begin{align*}
  T_1 &=
  \left(
  \begin{matrix}
  (0,0) & (1,1) & -(2,1) & (3,1) & -(4,1) & (5,1) & -(6,1)\\
\infty & (-1,1) & (2,7) & (-3,5) & -(3,5) & -(5,7) & (6,7)
  \end{matrix}
  \right), \\
    T_2 &=
  \left(
  \begin{matrix}
  (0,0) & (1,2) & -(2,2) & (3,2) & -(4,2) & (5,2) & -(6,2)\\
(0,10) & -(1,8) & (2,8) & (-3,7) & -(3,7) & -(5,8) & (6,8)
  \end{matrix}
  \right), \\
      T_3 &=
  \left(
  \begin{matrix}
  (0,0) & (1,3) & -(2,3) & (3,3) & -(4,3) & (5,3) & -(6,3)\\
(0,12) & -(1,9) & (2,9) & (-3,9) & -(3,9) & -(5,9) & (6,9)
  \end{matrix}
  \right),
  \end{align*}
 \[
\begin{array}{ll}
  T_4 = Dev_{\Z_7\times\{0\}}\big((0,0) \sim (4,0) \sim (6,8) \big),
&
  T_5 = Dev_{\Z_7\times\{0\}}\big((0,0) \sim (6,0) \sim (6,8) \big).
\end{array}
\]
 One can easily check that
 $\bigcup_{i=1}^{3} Orb_{\Z_7\times \Z_{13}}(T_i)\ \cup\
  \bigcup_{i=4}^{5} Orb_{\{0\}\times \Z_{13}}(T_i)$ is a $7$-sun system of  $K_{92}$.

 If $k=11$ and $4k+v=100$, set $V(K_{144})=(\Z_{11}\times \Z_{13})\cup\{\infty\}$.
 We consider the following $11$-suns
 \begingroup
 \allowdisplaybreaks
 \begin{align*}
 T_1 &=
 \left(
  \begin{matrix}
  (0,0) & (1,1) & -(2,1) & (3,1) & -(4,1) & (5,1) & -(6,1)\\
  \infty & (-1,1) & (2,7) & -(3,7) & (4,7) & (-5,1) & -(5,5)\\
  \end{matrix}\;\;
  \right.\\
  &\hspace{1cm}
  \left.
  \begin{matrix}
  (7,1) & -(8,1) & (9,1) & -(10,1)\\
  -(7,7) & (8,7) & -(9,7) & (10,7)
  \end{matrix}
  \right),
  \\
  T_2 &=
  \left(
  \begin{matrix}
    (0,0) & (1,2) & -(2,2) & (3,2) & -(4,2) & (5,2) & -(6,2)\\
    (0,10) & -(1,8) & (2,8) & -(3,8) & (4,8) & (-5,6) & -(5,7)
  \end{matrix}\;\;
  \right.\\
    &\hspace{1cm}
  \left.
  \begin{matrix}
    (7,2) & -(8,2) & (9,2) & -(10,2)\\
    -(7,8) & (8,8) & -(9,8) & (10,8)
  \end{matrix}
  \right), \\
  T_3 &=
  \left(
  \begin{matrix}
   (0,0) & (1,3) & -(2,3) & (3,3) & -(4,3) & (5,3) & -(6,3) \\
  (0,12) & -(1,9) & (2,9) & -(3,9) & (4,9) & (-5,9) & -(5,9)
  \end{matrix}\;\;
  \right.\\
    &\hspace{1cm}
  \left.
  \begin{matrix}
    (7,3) & -(8,3) & (9,3) & -(10,3)\\
   -(7,9) & (8,9)  & -(9,9) & (10,9)
  \end{matrix}
  \right),
\end{align*}
\endgroup
 \[
\begin{array}{ll}
  T_4 = Dev_{\Z_{11}\times\{0\}}\big((0,0) \sim (4,0) \sim (6,8) \big),
&
  T_5 = Dev_{\Z_{11}\times\{0\}}\big((0,0) \sim (6,0) \sim (5,8) \big),
  \\[1ex] \rule{0pt}{1\normalbaselineskip}
  T_6 = Dev_{\Z_{11}\times\{0\}}\big((0,0) \sim (8,0) \sim (8,8) \big). &
\end{array}
\]
One can check that
$\bigcup_{i=1}^{3} Orb_{\Z_{11}\times \Z_{13}}(T_i) \ \cup\
\bigcup_{i=4}^{6} Orb_{\{0\}\times \Z_{13}}(T_i)$
is an $11$-sun system of  $K_{144}$.

It is left to prove the
existence of a $k$-sun system of $K_{4kg+v}$ when $(k,v)=(7,21)$ and for every $g\geq 1$.
If $g=1$, a $7$-sun system of $K_{49}$ can be obtained as a particular case of the following construction. Let $p$ be a prime, $q=p^n\equiv1\pmod 4$ and $r$ be a primitive root of $\mathbb{F}_q$.
Setting
$S=Dev_{\langle r\rangle}(0 \sim r\sim r+1)$ where $\langle r\rangle=\{jr\mid 1\leq j\leq p\}$,
we have that $\bigcup_{i=0}^{\frac{q-5}{4}} Orb_{\mathbb{F}_q} (r^{2i}S)$ is a $p$-sun system of $K_q$.

If $g\geq2$, we notice that  $K_{28g + 21} = K_{28(g-1)+49}$. Considering the
$7$-sun system of $K_{49}$ just built, and recalling that by Theorem \ref{thm:hole}
there is a $7$-sun system of $K_{28}+49$, then Theorem \ref{thm:n+4kg} guarantees the existence
of a $7$-sun system of $K_{28(g-1)+49}$ whenever $g\neq 3$.
When $g=3$, a $7$-sun system of $K_{105}$ is constructed as follows.
Set $V(K_{105})=\Z_7\times \Z_{15}$.
 Let
 $S_{i,j}$ and $T$ be the $7$-suns defined below, where
 $(i,j)\in X= ([1,3]\times [1,7])\setminus\{(1,3), (1,6)\}$:
  \begin{align*}
  S_{i,j} &=
  \left(
  \begin{matrix}
  (0,0) & (i,j/2) & (2i,j)  & (3i,0)  & (4i,j)  & (5i,0) & (6i,j)\\
(i,-j/2) & (2i,0) & (3i,2j) & (4i,-j) & (5i,2j) & (6i,-j)& (0,2j )
  \end{matrix}
  \right),\\
  T &=
  \left(
  \begin{matrix}
  (0,0) & (0,7) & (0,2)  & (0,5)  & (0,-1)  & (0,3) & (0,1)\\
  (2,0) & (3,7) & (1,2)  & (1,8)  & (1,5)   & (1,0) & (1,10)
  \end{matrix}
  \right).
  \end{align*}
   One can check that
   $\displaystyle \bigcup_{(i,j)\in X} Orb_{\{0\}\times \Z_{15}}(S_{i,j})
   \ \cup\
   Orb_{\Z_{7}\times \Z_{15}}(T)$ is a $7$-sun system of $K_{105}$.
\end{proof}

\section{Construction of $p$-sun systems, $p$ prime}
In this section we prove Theorem \ref{main2}.
Clearly in view of Theorem \ref{main1} it is sufficient to construct a $p$-sun system of $K_v$ for any admissible $v$ with $2p < v< 6p$.
Hence, we are going to prove the following result.
\begin{thm}\label{pprimevpiccolo}
Let $p$ be an odd prime and let $v(v-1)\equiv 0\pmod{4p}$ with $2p< v< 6p$. Then there exists a $p$-sun system of  $K_v$.
\end{thm}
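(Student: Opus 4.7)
The plan is to prove Theorem~\ref{pprimevpiccolo} by a case-by-case explicit construction using the difference methods of Section~2. The first step is to enumerate the admissible orders $v$ with $2p<v<6p$. Since $p$ is prime, the condition $p\mid v(v-1)$ forces $v\equiv 0$ or $1\pmod p$, and combined with $4\mid v(v-1)$ one is left with $v\in\{3p+1,4p,4p+1,5p\}$ when $p\equiv 1\pmod 4$, and with $v\in\{3p,4p,4p+1,5p+1\}$ when $p\equiv 3\pmod 4$. Thus the theorem reduces to the eight cases obtained by pairing each residue class of $p$ modulo $4$ with the four corresponding values of $v$.

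For each case I would set up the vertex set of $K_v$ in the form $(G\times[0,m-1])\cup\{\infty_u\mid u\in\Z_w\}$, where $G$ is a cyclic group of odd order $n$ with $nm+w=v$, then exhibit a small family $\cF$ of base $p$-suns on this vertex set and verify the hypotheses of Corollary~\ref{mixed diff method for Kmn} (or, when $w\geq 1$, of Proposition~\ref{mixed diff method} together with a careful identification of the edges incident with the infinities). The natural parameter choices are $G=\Z_v$ when $v$ is odd and $w=0$, and $G=\Z_{v-1}$ with $w=1$ when $v$ is even. For example, when $v=4p+1$ the arithmetic shows that a single base $p$-sun suffices, provided its $2p$ edges produce a multiset of differences covering $\Z_{4p+1}\setminus\{0\}$ exactly once, and when $v=4p$ one again needs a single base sun, now containing $\infty$ as a pendant vertex. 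The cases $v\in\{3p,3p+1,5p,5p+1\}$ can be treated analogously, sometimes requiring a slightly larger bounded family of base suns. In all cases, primality of $p$ is essential: it guarantees that the possible $G$-stabilizers are only $\{0\}$ or $G$, which rigidifies the orbit arithmetic and allows the differences to be controlled.

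The main obstacle is to write down, for each admissible $v$, an explicit family of base $p$-suns whose partial mixed differences together with the lists of neighbours of the infinities meet the hypotheses of Corollary~\ref{mixed diff method for Kmn} or Proposition~\ref{mixed diff method}. The most efficient strategy is to parametrise the cycle vertices and pendant vertices of each base sun as two arithmetic-like sequences in $G$, with common step depending on the residue of $p$ modulo $4$, and to verify by direct inspection that the union of the cycle differences and the pendant differences is a transversal for the pairs $\{d,-d\}$ in $G\setminus\{0\}$ (augmented by the distinguished neighbour of $\infty$ when $w=1$). Finding such a uniform construction that is valid for every prime $p$ in a given residue class is the delicate combinatorial step, because the required base sun varies non-trivially with $v$ and with $p\bmod 4$; a handful of small primes (notably $p=3,5$, which are already covered in the literature, and possibly $p=7$) may require sporadic base suns of the ad hoc sort used at the end of the proof of Theorem~\ref{main1}. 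Once the base suns are in hand, invoking Corollary~\ref{mixed diff method for Kmn} (or Proposition~\ref{mixed diff method}) mechanically produces the desired $p$-sun system and, combined with Theorem~\ref{main1}, yields Theorem~\ref{main2}.
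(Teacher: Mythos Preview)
Your case analysis is correct, and for $v\in\{4p,4p+1\}$ your plan coincides with the paper's Theorem~\ref{v=01mod4k}: a single base sun over $\Z_{4k+1}$ (respectively $\Z_{4k-1}\cup\{\infty\}$) suffices, and in fact this works for every odd $k$, so primality plays no role there.

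The substantive gap is in the four remaining cases $v\in\{3p,3p+1,5p,5p+1\}$. Your suggestion to take $G=\Z_v$ or $\Z_{v-1}$ with $m=1$ and look for a bounded family of base suns does not match what actually happens and would be hard to carry out: for $v=3p$, say, a full $\Z_{3p}$-orbit has $3p$ suns while the system contains $3(3p-1)/4$ suns, so orbit sizes do not divide evenly and one is forced into short orbits whose structure you have not analysed. The paper instead takes $G=\Z_p$ with $m\in\{3,5\}$ (so the vertex set is $\Z_p\times\Z_m$, possibly with one $\infty$) and controls the mixed differences $\Delta_{ij}$ separately for each $(i,j)\in\Z_m^2$. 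The auxiliary ingredient you are missing is the ``sun of type $(i,j)$'' of Lemma~\ref{lemmaij}: a $p$-sun that is itself $\Z_p$-invariant, contributing exactly one element to $\Delta_{ij}$ and $\pm$ one element to $\Delta_{ii}$. Linearly (in $p$) many such suns are needed to fill in the differences not covered by the main base sun, so the family is not bounded. Finally, primality is not used via the stabilizer dichotomy you describe; it enters because every nonzero element of $\Z_p$ is a generator (so Lemma~\ref{lemmaij} always produces a genuine $p$-cycle) and, crucially, because Proposition~\ref{v=3p+1} labels the main cycle by consecutive powers of a primitive root of $\Z_p$. This primitive-root construction is the key idea for $v=3p+1$ and is precisely the ``delicate combinatorial step'' your outline defers.
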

Since the existence of $p$-sun systems with $p=3,5$ has been proved in \cite{FJLS} and in \cite{FHL}, respectively, here we can assume $p\geq 7$.

It is immediate to see that by the necessary conditions for the existence of a $p$-sun system of  $K_v$,
 it follows that $v$ lies in one of the following congruence classes modulo $4p$:
\begin{itemize}
\item [1)] $v\equiv 0,1\pmod{4p}$;
\item [2)] $v\equiv p,3p+1\pmod{4p}$ if $p\equiv 1\pmod 4$;
\item [3)] $v\equiv p+1,3p\pmod{4p}$  if $p\equiv 3\pmod 4$.
\end{itemize}

If $v\equiv 0,1\pmod{4p}$ we present a direct construction which holds more in general for $p=k$, where
$k$ is an odd integer and not necessarily a prime.
\begin{thm}\label{v=01mod4k}
For any $k=2t+1\geq7$  there exists a  $k$-sun system of $K_{4k+1}$
and a $k$-sun system of  $K_{4k}$.
\end{thm}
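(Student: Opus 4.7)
The plan is to build both sun systems as orbits of a single starter sun under a cyclic group action, invoking Corollary \ref{mixed diff method for Kmn} with $m=1$. For $K_{4k+1}$ I take $V(K_{4k+1}) = \Z_{4k+1}$ and apply the corollary with $n=4k+1$, $w=0$, reducing the task to exhibiting a single $k$-sun $\Gamma$ on $\Z_{4k+1}$ whose $2k$ edges realize the $\pm$-difference list $\Z_{4k+1}\setminus\{0\}$ exactly; then $Orb_{\Z_{4k+1}}(\Gamma)$ is the required system of $4k+1$ suns. For $K_{4k}$ I write $K_{4k} = K_{4k-1}+1$ with $V(K_{4k-1}) = \Z_{4k-1}$ and apply the corollary with $n=4k-1$, $w=1$, reducing to the construction of a single $k$-sun $\Gamma$ on $\Z_{4k-1}\cup\{\infty\}$ having exactly one edge incident with $\infty$ and such that its other $2k-1$ edges realize $\Z_{4k-1}\setminus\{0\}$ in $\pm$-differences.

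The edge-counts match exactly: in the first case $2k$ sun edges produce $4k = |\Z_{4k+1}\setminus\{0\}|$ signed differences, while in the second case $2k-1$ pure edges produce $4k-2 = |\Z_{4k-1}\setminus\{0\}|$, so the construction must be ``tight'' -- each edge must contribute a distinct absolute value. To build the starter, I would label the cycle vertices $(c_1, c_2, \ldots, c_k)$ as a zig-zag near $0$, for example $c_{2i-1} = i-1$ and $c_{2i} = -i$, so that the consecutive differences $c_{i+1}-c_i$ traverse the small signed integers $\pm 1, \pm 2, \ldots$ (with a suitable twist at the closing edge $c_1-c_k$); the pendant vertices $c'_1, \ldots, c'_k$ would then be placed on the far side of $\Z_N$ (where $N = 4k\pm 1$) to supply the remaining larger $\pm$-differences. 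In the $K_{4k}$ case one pendant is replaced by $\infty$, whose edge absorbs a single ``missing'' difference.

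The main obstacle is combinatorial bookkeeping. Because the counting is tight with no slack for a coincidence, I expect that the explicit template will split into sub-cases according to the parity of $t = (k-1)/2$ (equivalently, $k \bmod 4$), in order to simultaneously ensure (i) the $2k$ vertices of $\Gamma$ are pairwise distinct in $\Z_N$ and (ii) the $2k$ (resp.\ $2k-1$) pure edges yield each element of the target difference set exactly once. Once the correct template is fixed, the verification reduces to a finite modular-arithmetic check, analogous to the starter constructions used in \cite{B2003} and to graceful labelings of crown/sun graphs. The heart of the work is therefore finding the right pendant assignment; everything else, including the application of Corollary \ref{mixed diff method for Kmn}, is mechanical.
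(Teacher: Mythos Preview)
Your proposal is correct and follows essentially the same route as the paper: a single starter $k$-sun over $\Z_{4k+1}$ (resp.\ $\Z_{4k-1}\cup\{\infty\}$) built from the zig-zag cycle $(0,-1,1,-2,2,\ldots,-t,2t)$, with pendant edges realizing the complementary differences, and then Corollary~\ref{mixed diff method for Kmn} with $m=1$. The only point where the paper is sharper than you anticipate is that no genuine case split on $k\bmod 4$ is needed---a single uniform pendant assignment $\ell_r=c_r+d_r$ works, with just one vertex $\ell_{(t+1)/2}$ flipped to $c_{(t+1)/2}-d_{(t+1)/2}$ when $t$ is odd.
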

\begin{proof}
Let $C$ be the $k$-cycle with vertices in $\Z$ so defined:
$$C=(0,-1,1,-2,2,-3,3,\ldots,1-t, t-1,-t,2t).$$
Note that the list $D_1$ of the positive differences in $\Z$ of $C$ is $D_1=[1,2t]\cup \{3t\}$.
Consider now the ordered $k$-set $D_2=\{d_{1},d_{2},\ldots,d_{k}\}$
so defined:
$$D_2=[2t+1,3t-1]\cup [3t+1,4t+2].$$
Obviously  $D_1 \cup D_2=[1,2k].$
Let $\{c_{1},c_{2},\ldots,c_{k}\}$
be the increasing order of the vertices of the cycle $C$
and set $\ell_{r}=c_{r}+d_{r}$ for every $r\in[1,k]$,
with $r\neq \frac{t+1}{2}$, and
$\ell_{\frac{t+1}{2}}=c_{\frac{t+1}{2}}-d_{\frac{t+1}{2}}$ when $t$ is odd.
It is not hard to see that $V=\{c_{1},c_{2},\ldots,c_{k},\ell_{1},\ell_{2},\ldots,\ell_{k}\}$ is a set.
Note also that $V\subseteq \{-3t-1\}\cup [-t,5t] \cup\{6t+2\}$.

Let $S$ be the sun obtainable from $C$ by adding the pendant edges $\{c_{i},\ell_{i}\}$
for $i\in[1, k]$.
Clearly, $\Delta S=\pm (D_1\ \cup\ D_2)=\pm [1,2k]$.
So we can conclude that if we consider the vertices of $S$ as elements of $\Z_{4k+1}$,
the vertices are still pairwise distinct and $\Delta S=\Z_{4k+1}\setminus\{0\}$.
Then, by applying Corollary \ref{mixed diff method for Kmn} (with $G=\Z_{4k+1}, m=1, w=0$),
it follows that
$Orb_{\Z_{4k+1}}S$ is a $k$-sun system of  $K_{4k+1}$.

Now we construct a  $k$-sun system of $K_{4k}$.
 Let $S$ be defined as above and
note that $d_{k}=2k$. Let $S^*$
be the sun obtained by $S$ setting $\ell_{k}=\infty$.
It is immediate that if we consider the vertices of $S^*$ as elements of $\Z_{4k-1}\cup \{\infty\}$,
then Corollary \ref{mixed diff method for Kmn} (with $G=\Z_{4k-1}, m=1, w=1$) guarantees that $Orb_{\Z_{4k-1}}S^*$ is a $k$-sun system of  $K_{4k}$.
\end{proof}

\begin{ex}
Let $k=2t+1=9$, hence $t=4$.
By following the proof of Theorem \ref{v=01mod4k}, we construct a $9$-sun system of  $K_{37}$.
Taking $C=(0,-1,1,-2,2,-3,3,-4,8)$, we have that
  \begin{eqnarray*}
    \{d_1,d_2,\ldots,d_9\} &=& [9,11]\ \cup\ [13,18]\\
    \{c_1,c_2,\ldots,c_9\} &=& \{-4,-3,-2,-1,0,1,2,3,8\}.
  \end{eqnarray*}
Hence $\{\ell_1,\ell_2,\ldots,\ell_9\}=\{5,7,9,12,14,16,18,20,26\}$
and we obtain the following $9$-sun $S$ with vertices in $\Z_{37}$:
\begin{equation}\nonumber
 S= \left(
  \begin{array}{ccccccccc}
    0  & -1      & 1 & -2 & 2 & -3 & 3 & -4 & 8   \\
    14 & 12 & 16 & 9 & 18 & 7 & 20 & 5 & 26
  \end{array}
  \right),
\end{equation}
such that $\Delta S=\Z_{37}\setminus \{0\}$.
Therefore, $Orb_{\Z_{37}}S$ is a $9$-sun system of  $K_{37}$.
\end{ex}

From now on,
we assume that $p$ is an odd prime number and denote by $\Sigma$ the following $p$-sun:

\begin{equation}\nonumber
 \Sigma= \left(
  \begin{array}{ccccc}
    c_0  & c_1      & \ldots & c_{p-2}   & c_{p-1} \\
    \ell_0 & \ell_1     & \ldots & \ell_{p-2}  & \ell_{p-1}
  \end{array}
  \right).
\end{equation}

\begin{lem}\label{lemmaij}
Let $p$ be an odd prime. For any $x,y\in\Z_p$ with $x\neq0$ and any $i,j\in \Z_m$ with $i\neq j$
there exists a $p$-sun $S$ such that $\Delta_{ii}S=\pm{x}$, $\Delta_{ij}S=y$, $\Delta_{ji}S=-y$ and $\Delta_{hk}S=\emptyset$ for any $(h,k)\in (\Z_m\times \Z_m)\setminus\{(i,i),(i,j),(j,i)\}$.
\end{lem}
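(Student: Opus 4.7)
The plan is to construct $\Sigma$ explicitly as a $p$-sun whose $\Z_p$-stabilizer is all of $\Z_p$, so that in the partial-difference formulas only a single representative edge per $\Z_p$-orbit contributes (i.e., $k/s=p/p=1$). Since $p$ is prime, the stabilizer of any $p$-sun is either trivial or the whole of $\Z_p$, and the desired multisets $\{\pm x\}$, $\{y\}$, $\{-y\}$ of sizes $2,1,1$ force the non-trivial case.

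First I would define
\[
c_h=(hx,i)\quad\text{and}\quad \ell_h=(hx+y,j)\qquad\text{for every } h\in\Z_p,
\]
and let $\Sigma$ be the $p$-sun with outer cycle $(c_0,c_1,\dots,c_{p-1})$ and pendant edges $\{c_h,\ell_h\}$. Because $p$ is prime and $x\neq 0$, the map $h\mapsto hx$ is a bijection of $\Z_p$, so the $c_h$ are pairwise distinct; likewise the $\ell_h$ are pairwise distinct, and $c_h\neq\ell_{h'}$ because $i\neq j$. Hence $\Sigma$ really is a $p$-sun on $2p$ vertices.

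Next I would verify that $Stab(\Sigma)=\Z_p$. For any $g\in\Z_p$, set $t=gx^{-1}\in\Z_p$; then
\[
\tau_g(c_h)=(hx+g,i)=((h+t)x,i)=c_{h+t}, \qquad \tau_g(\ell_h)=\ell_{h+t},
\]
so $\tau_g$ preserves the cyclic order of the outer cycle and the pendant matching. Thus $\tau_g(\Sigma)=\Sigma$ for every $g$, and $s:=|Stab(\Sigma)|=p$, giving $k/s=1$ in the definition of the partial differences.

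Finally I would read off each $\Delta_{hk}\Sigma$ directly from the definition, using only $h=1$ as the index. The unique cycle edge $\{c_1,c_2\}$ has both endpoints at level $i$ with first-coordinate difference $x$, yielding $\Delta_{ii}\Sigma=\{\pm x\}$; no cycle edge crosses levels, so cycle edges contribute nothing to any $\Delta_{hk}$ with $h\neq k$. The unique pendant edge $\{c_1,\ell_1\}$ with $c_1=(x,i)$ and $\ell_1=(x+y,j)$ contributes $y$ to $\Delta_{ij}\Sigma$ and $-y$ to $\Delta_{ji}\Sigma$. Every remaining $\Delta_{hk}\Sigma$ is empty because all vertices of $\Sigma$ lie in levels $i$ and $j$. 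There is no real obstacle in the argument; the only conceptual point is realising that the smallness of the prescribed difference multisets, combined with $p$ being prime, forces the full stabilizer, after which the explicit formulas for $c_h$ and $\ell_h$ essentially write themselves.
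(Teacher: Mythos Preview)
Your proof is correct and uses essentially the same construction as the paper: your sun with $c_h=(hx,i)$ and $\ell_h=(hx+y,j)$ is precisely the graph the paper writes compactly as $S=Dev_{\Z_p\times\{0\}}\big((0,i)\sim(x,i)\sim(y+x,j)\big)$. The paper's one-line proof leaves the stabilizer verification implicit, whereas you spell it out explicitly, but the underlying object and the difference computation are identical.
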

\begin{proof}
It is easy to see that $S=Dev_{\Z_p\times\{0\}}((0,i)\sim (x,i)\sim (y+x,j))$ is the required $p$-sun.
\end{proof}

We will call such a $p$-sun a \emph{sun of type $(i,j)$}.
For the following it is important to note that if $S$ is a $p$-sun of type $(i,j)$,
then $|\Delta_{ii}S|=2$, $|\Delta_{jj}S|=0$ and $|\Delta_{ij}S|=|\Delta_{ji}S|=1$.

The following two propositions provide us $p$-sun systems of $K_{mp+1}$ whenever $m\in\{3,5\}$
and $p\equiv m-2 \pmod{4}$.

\begin{prop}\label{v=3p+1}
Let $p\equiv 1 \pmod 4\geq 13$ be a prime. Then  there exists a  $p$-sun system of  $K_{3p+1}$.
\end{prop}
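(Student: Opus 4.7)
My plan is to apply Corollary \ref{mixed diff method for Kmn} with $G=\Z_p$, $m=3$ and $w=1$, which gives $K_{mn}+w=K_{3p+1}$, with vertex set $V=(\Z_p\times[0,2])\cup\{\infty\}$. It thus suffices to exhibit a family $\cF$ of $p$-suns having vertices in $V$ such that (i) $\Delta_{ii}\cF=\Z_p\setminus\{0\}$ for each $i\in[0,2]$ and $\Delta_{ij}\cF=\Z_p$ for each $0\leq i<j\leq 2$, and (ii) $N_{\cF}(\infty)$ contains exactly one element of each layer $\Z_p\times\{i\}$.

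I would build $\cF$ in two stages. First, I take a single anchor sun $\Sigma$ of the form specified just before the proposition, chosen with $c_0=\infty$ and with the three neighbors of $\infty$ in $\Sigma$, namely $c_1$, $c_{p-1}$ and $\ell_0$, lying one in each of the layers $\Z_p\times\{0\}$, $\Z_p\times\{1\}$, $\Z_p\times\{2\}$. This makes $Orb(\Sigma)$ cover all $3p$ edges incident to $\infty$ and it forces condition (ii). Secondly, I add a suitable collection of $\Z_p$-invariant suns of type $(i,j)$ furnished by Lemma \ref{lemmaij}; each such sun contributes a single pair $\pm x$ to $\Delta_{ii}$, the element $y$ to $\Delta_{ij}$, and $-y$ to $\Delta_{ji}$, and nothing to any other difference set.

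To match (i), let $a_i$ (resp.\ $b_{ij}$, with $i<j$) be the number of non-$\infty$ edges of $\Sigma$ lying inside layer $i$ (resp.\ joining layers $i$ and $j$), and let $n_{ij}$ denote the number of type-$(i,j)$ suns in $\cF$ (for $i\neq j$). Then condition (i) reduces to the linear system $a_i+n_{ij}+n_{ik}=(p-1)/2$ for each $\{i,j,k\}=\{0,1,2\}$ and $b_{ij}+n_{ij}+n_{ji}=p$ for each $0\leq i<j\leq 2$. Combined with the fact that $\Sigma$ has exactly $2p-3$ non-$\infty$ edges, one deduces $\sum_i a_i=(p-9)/4$ and $\sum_{i<j}b_{ij}=(7p-3)/4$, both non-negative integers since $p\equiv 1\pmod 4$ and $p\geq 13$. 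In particular, almost all non-$\infty$ edges of $\Sigma$ must be inter-layer edges.

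The main obstacle is therefore to exhibit $\Sigma$ explicitly, uniformly in $p$, so that its $2p-3$ non-$\infty$ edges split into $(p-9)/4$ intra-layer and $(7p-3)/4$ inter-layer edges whose difference multisets $\Delta_{ii}\Sigma$ and $\Delta_{ij}\Sigma$ are repetition-free, and so that the residues they leave uncovered inside $\Z_p\setminus\{0\}$ (in each diagonal slot) and inside $\Z_p$ (in each off-diagonal slot) can be assigned to pairs and singletons realized by the type-$(i,j)$ suns without conflict. I expect to carry this out by describing the cycle $c_0,c_1,\ldots,c_{p-1}$ as an explicit alternating walk across the three layers whose consecutive differences are drawn from a short interval of $\Z_p$, and by placing the pendants $\ell_i$ in a layer different from that of $c_i$ whenever possible. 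The verification would then be a direct, if lengthy, calculation checking that the prescribed differences are pairwise distinct and that what remains splits correctly into the required pairs $\pm x$ and singletons $y$; the difficulty lies in designing this combinatorial pattern to work for every prime $p\equiv 1\pmod 4$ with $p\geq 13$ simultaneously.
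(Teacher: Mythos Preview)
Your overall framework coincides with the paper's: apply Corollary \ref{mixed diff method for Kmn} with $(G,m,w)=(\Z_p,3,1)$, take one anchor sun through $\infty$ touching each layer once, and fill in the missing differences with $\Z_p$-invariant suns of type $(i,j)$ from Lemma \ref{lemmaij}. Your edge-count system and the values $\sum_i a_i=(p-9)/4$, $\sum_{i<j}b_{ij}=(7p-3)/4$ are correct.

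The genuine gap is the construction of the anchor sun itself, which you identify as ``the main obstacle'' but do not resolve. Your proposed ``alternating walk with consecutive differences drawn from a short interval'' is not how the paper proceeds, and it is not clear such a pattern can be made repetition-free in all six off-diagonal slots simultaneously. The paper's key idea is to exploit primality and label the cycle by a primitive root: with $r$ a generator of $\Z_p^*$, one sets (essentially) $B(c_i)=(r^i,\,i\bmod 3)$ for $i\in[1,p-1]$, so that consecutive cycle differences are $r^i(r-1)$, automatically pairwise distinct. Pendant vertices are placed at $(r^{i\pm 1},\,i+2)$, with a controlled number of exceptions shifted into the same layer to produce exactly the required $(p-9)/4$ intra-layer edges. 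Because the layer pattern $i\bmod 3$ interacts with the cycle length $p-1$, the paper must split into the two subcases $p\equiv 1\pmod{12}$ and $p\equiv 5\pmod{12}$ (and handle $p=13$ separately). Without this primitive-root device, or an equally explicit substitute, your plan remains a plausible outline rather than a proof.
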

\begin{proof}

We have to distinguish two cases according to the congruence of $p$ modulo 12.\\
Case 1. Let $p\equiv 1\pmod{12}$.

If $p=13$, we construct a $13$-sun system of $K_{40}$ as follows.
Let $S$ be the following $13$-sun whose vertices are labelled with elements of $(\Z_{13}\times \Z_3)\cup\{\infty\}$:
\begingroup
\allowdisplaybreaks
\begin{align*}
S&= \left(
  \begin{matrix}
  \infty & (2,1) & (4,2) & (8,0) & (3,1) & (6,2) & (12,0) \\
  (0,2) & (4,1) & (8,1) & (3,2) & (6,0) & (12,1) & (11,2) \\
  \end{matrix}\;\;
  \right.\\
  &\hspace{1cm}
  \left.
  \begin{matrix}
  (11,1) & (9,2) & (5,0) & (10,1) & (7,2) & (1,0)\\
  (9,0) & (5,1) & (10,2) & (7,0) & (1,1) & (2,2)
   \end{matrix}
  \right).
\end{align*}
\endgroup
We have:
\begin{align*}
\Delta_{12}S&=\Delta_{21}S=\pm\{2,3,4,6\}, \quad\quad \Delta_{02}S=\Delta_{20}S=\pm\{1,4,5,6\},\\
\Delta_{01}S&=-\Delta_{10}S=
\{-1,2,\pm 3,\pm 5\},\quad\quad
\Delta_{00}S=\Delta_{22}S=\emptyset,\quad\quad \Delta_{11}S=\pm\{2\}.
\end{align*}
Now it remains to construct a set $\cT$ of edge-disjoint $13$-suns such that
\begin{align*}
\Delta_{12}{\cT}&=\Delta_{21}{\cT}=\{0,\pm 1,\pm 5\}, \quad\quad \Delta_{02}{\cT}=\Delta_{20}{\cT}=\{0,\pm2,\pm3\},\\
\Delta_{01}{\cT}&=-\Delta_{10}{\cT}= \{0,1,-2,\pm 4,\pm 6\},\ \
\Delta_{00}{\cT}=\Delta_{22}{\cT}=\Z_{13}^*,\ \
 \Delta_{11}{\cT}=\Z_{13}^*\setminus\{\pm 2\}.
\end{align*}
In order to do this it is sufficient to take, $\cT=\{T_{01}^i \mid i\in[1,4]\}\cup\{T_{02}^i\mid i\in[1,2]\}\cup\{T_{10}^i\mid i\in[1,3]\}
\cup\{T_{12}^i \mid i\in[1,2]\}\cup \{T_{20}^i \mid i\in[1,3]\}\cup \{T_{21}^i \mid i\in[1,3]\}$, where:
\begin{align*}
T_{01}^i&=Dev_{\Z_{13}\times\{0\}}((0,0)\sim (x_i,0)\sim (y_i+x_i,1)), \textrm{where}\ x_i\in[1,4],\ y_i\in \pm\{4,6\},\\
T_{02}^i&=Dev_{\Z_{13}\times\{0\}}((0,0)\sim (x_i,0)\sim (y_i+x_i,2)), \textrm{where}\ x_i\in[5,6],\ y_i\in \pm\{2\},\\
T_{10}^i&=Dev_{\Z_{13}\times\{0\}}((0,1)\sim (x_i,1)\sim (y_i+x_i,0)), \textrm{where}\ x_i\in\{1,3,4\},\ y_i\in \{0,-1,2\},\\
T_{12}^i&=Dev_{\Z_{13}\times\{0\}}((0,1)\sim (x_i,1)\sim (y_i+x_i,2)), \textrm{where}\ x_i\in[5,6],\ y_i\in \pm\{1\},\\
T_{20}^i&=Dev_{\Z_{13}\times\{0\}}((0,2)\sim (x_i,2)\sim (y_i+x_i,0)), \textrm{where}\ x_i\in[1,3],\ y_i\in \{0,\pm3\},\\
T_{21}^i&=Dev_{\Z_{13}\times\{0\}}((0,2)\sim (x_i,2)\sim (y_i+x_i,1)), \textrm{where}\ x_i\in[4,6],\ y_i\in \{0,\pm5\}.
\end{align*}
We have that $\cT\cup Orb_{\Z_{13}\times\{0\}}S$ is a $13$-sun system of $K_{40}$.

Suppose now that $p\geq37$. We proceed in a very similar way to the previous case.
Let $r$ be a primitive root of $\Z_p$.
Consider the $((\Z_{p}\times \Z_{3})\cup\{\infty\})$-labeling $B$ of $\Sigma$ so defined:
\begin{align*}
B(c_0)=\infty;&\quad B(c_i)=(r^i,i)\quad {\rm for} \ 1\leq i\leq p-1 \\
B(\ell_0)=(0,2);&\quad B(\ell_i)=(r^{i+1},i+2)
    \end{align*}
except for $\frac{p-9}{4}$ values of $i \equiv 1 \pmod 3$ for which we set $B(\ell_i)=(r^{i+1},i)$.
Letting $S=B(\Sigma)$,
it is immediate that the labels of the vertices of $S$ are pairwise distinct.
Note that
$$|\Delta_{00}S|=|\Delta_{22}S|=0, \quad |\Delta_{11}S|=\frac{p-9}{2},\quad |\Delta_{01}S|=|\Delta_{10}S|=\frac{5p+7}{12},$$
$$|\Delta_{ij}S|=\frac{2p-2}{3}\ {\rm for} \ (i,j)\in\{(0,2),(1,2),(2,0),(2,1)\}.$$
Hence, reasoning as in the previous case, we have to construct a set $\cT$ of $p$-suns
such that if $i\neq j$ then $\Delta_{ij}\cT=\Z_p\setminus \Delta_{ij}S$ is a set and also
$\Delta_{ii}\cT=\Z^*_p\setminus \Delta_{ii}S$ is a set.
In particular, this implies that for any $T,T' \in \cT$ we have $\Delta_{ij}T \cap \Delta_{ij}T'=\emptyset$
and that $|\Delta_{00}\cT|=|\Delta_{22}\cT|=p-1$,
$|\Delta_{11}\cT|=\frac{p+7}{2}$,
$|\Delta_{ij}\cT|=\frac{p+2}{3}$ for $(i,j)\in\{(0,2),(1,2),(2,0),(2,1)\}$,
and $|\Delta_{01}\cT|=|\Delta_{10}\cT|=\frac{7p-7}{12}$.
In order to do this it is sufficient to take $\cT$ as a set consisting of
$\frac{p-1}{2}$ suns of type $(0,1)$,
$\frac{p-1}{12}$ suns of type $(1,0)$,
$\frac{p+11}{6}$ suns of type $(1,2)$,
$\frac{p+2}{3}$ suns of type $(2,0)$,
$\frac{p-7}{6}$ suns of type $(2,1)$,
which exist in view of Lemma \ref{lemmaij}.
We have that $Orb_{\Z_p\times\{0\}}S \cup \cT$ is a $p$-sun system of  $K_{3p+1}$.

Case 2. Let $p\equiv 5\pmod{12}$.  Let $r$ be a primitive root of $\Z_p$.
Consider the $((\Z_{p}\times \Z_{3})\cup\{\infty\})$-labeling $B$ of $\Sigma$ so defined:
\begin{align*}
B(c_0)=&\infty;\quad B(c_i)=(r^i,i)\quad {\rm for} \ 1\leq i\leq p-2;\quad B(c_{p-1})=(1,0);\\
B(\ell_0)=&(0,2);\quad B(\ell_1)=(r,2);\quad B(\ell_i)=
  \begin{cases}
(r^{i-1},i+1)& {\rm for}\ i\in \left[2,\frac{p-1}{2}\right]\\
(r^{i+1},i+2)& {\rm for}\ i\in \left[\frac{p+1}{2},p-3\right]\\
\end{cases}\\
B(\ell_{p-2})=&(1,1);\quad B(\ell_{p-1})=(1,2);
\end{align*}
except for $\frac{p-17}{6}$ values of $i\equiv 0\pmod 3$ with $i\in\left[3,\frac{p-1}{2}\right]$ for which we set
$B(\ell_i)=(r^{i-1},i)$ and
$\frac{p-5}{12}$ values of $i\equiv 0\pmod 3$ with $i\in\left[\frac{p+1}{2},p-5\right]$ for which we set
$B(\ell_i)=(r^{i+1},i)$.
Letting $S=B(\Sigma)$, it is easy to see that the labels of the vertices of $S$ are pairwise distinct.
Note that
\begin{align*}
|\Delta_{00}S|&=\frac{p-9}{2},\quad |\Delta_{11}S|=|\Delta_{22}S|=0, \quad
 |\Delta_{01}S|=|\Delta_{10}S|=\frac{p+1}{2},\\
  |\Delta_{02}S|&=|\Delta_{20}S|=\frac{7p+1}{12},\quad |\Delta_{12}S|=|\Delta_{21}S|=\frac{2p-4}{3}.
 \end{align*}
Hence, we have to construct a set $\cT$ of $p$-suns
such that $|\Delta_{11}\cT|=|\Delta_{22}\cT|=p-1$,
$|\Delta_{00}\cT|=\frac{p+7}{2}$,
$|\Delta_{01}\cT|=|\Delta_{10}\cT|=\frac{p-1}{2}$,
$|\Delta_{02}\cT|=|\Delta_{20}\cT|=\frac{5p-1}{12}$,  and
$|\Delta_{12}\cT|=|\Delta_{21}\cT|=\frac{p+4}{3}$.
 In order to do this it is sufficient to take $\cT$ as a set consisting of
$\frac{p+7}{4}$ suns of type $(0,1)$,
$\frac{p-9}{4}$ suns of type $(1,0)$,
$\frac{p+7}{4}$ suns of type $(1,2)$,
$\frac{5p-1}{12}$ suns of type $(2,0)$, and
$\frac{p-5}{12}$ suns of type $(2,1)$
which exist in view of Lemma \ref{lemmaij}.
We have that $Orb_{\Z_p}S \cup \cT$ is a $p$-sun system of $K_{3p+1}$.
\end{proof}

\begin{prop}\label{case2d}
For any prime $p\equiv 3\pmod 4$ there exists a $p$-sun system of $K_{5p+1}$.
\end{prop}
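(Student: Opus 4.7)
The approach will be to apply Corollary \ref{mixed diff method for Kmn} with $G=\Z_{5p}$, $m=1$, and $w=1$, taking $V(K_{5p+1})=\Z_{5p}\cup\{\infty\}$. Since $p\geq 7$ is prime and coprime to $5$, the group $\Z_{5p}$ has a unique subgroup of order $p$, namely $H=5\,\Z_{5p}$; the nonzero elements of $\Z_{5p}$ split into $(p-1)/2$ pairs $\{d,-d\}$ inside $H^{\ast}$ and $2p$ such pairs outside $H$. The base family $\cF$ will consist of one ``large'' base $p$-sun $S$ with trivial $\Z_{5p}$-stabilizer and with $\infty$ as a pendant of degree one, together with $(p+1)/4$ ``small'' base $p$-suns of the form $S_i=Dev_H(0\sim x_i\sim x_i+y_i)$ with $x_i\in H^{\ast}$ and $y_i\in\Z_{5p}\setminus H$, each stabilised by $H$. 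Note $(p+1)/4$ is a positive integer since $p\equiv 3\pmod 4$.

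Since the orbit of $S$ under $\Z_{5p}$ has size $5p$ and each $S_i$-orbit has size $5$, this yields $5p+5(p+1)/4=5(5p+1)/4$ suns in the decomposition, matching $|E(K_{5p+1})|/(2p)$. Similarly, $|\Delta S|=2(2p-1)=4p-2$ and $|\Delta S_i|=4$ for every $i$, so the total number of differences produced equals $(4p-2)+(p+1)=5p-1=|\Z_{5p}^{\ast}|$; hence, to verify condition (1) of Corollary \ref{mixed diff method for Kmn}, it suffices to arrange that these differences are pairwise distinct. Each small sun $S_i$ covers exactly one pair $\{\pm x_i\}\subset H^{\ast}$ and one pair $\{\pm y_i\}\subset\Z_{5p}\setminus H$, so once $S$ has been built, one needs only to pair the remaining $H^{\ast}$-pairs with the remaining pairs outside $H$ (in any way) to obtain the $S_i$'s. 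The whole problem therefore reduces to constructing $S$ with $4p-2$ distinct differences, of which $(p-3)/4$ pairs lie inside $H^{\ast}$ and $(7p-1)/4$ lie outside $H$; both counts are non-negative integers because $p\equiv 3\pmod 4$, and they add up to $2p-1$ as required.

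To produce $S$, I would follow the spirit of Proposition \ref{v=3p+1}: choose a primitive root $r$ of $\Z_p^{\ast}$, lift it to $\Z_{5p}$, and express the cycle-vertices $c_0,\ldots,c_{p-1}$ and the finite pendant-vertices $\ell_0,\ldots,\ell_{p-2}$ of $S$ as suitable powers of $r$ together with small additive shifts in $\Z_{5p}$, with $\ell_{p-1}=\infty$. A case analysis according to the residue class of $p$ modulo a small integer will likely be needed to handle the different ways in which the primitive root interacts with the coset structure of $H$, in close analogy with the two sub-cases of Proposition \ref{v=3p+1}; the small primes $p=7$ and $p=11$ will presumably be handled by explicit tabulated constructions, as in the exceptional cases of Theorem \ref{thm:hole} and the proof of Theorem \ref{main1}. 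Condition (2) of Corollary \ref{mixed diff method for Kmn} is automatic, since only $S$ contains $\infty$ and does so with degree one.

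The main obstacle is the explicit construction of $S$. Compared with the three-coset situation of Proposition \ref{v=3p+1}, there are now five cosets of $H$ in $\Z_{5p}$ to track, so one must simultaneously ensure the pairwise distinctness of all $4p-2$ differences and the precise coset-distribution prescribed above---a more delicate numerical fit than in the $K_{3p+1}$ case. Once $S$ has been exhibited with these properties, the small suns $S_i$ are produced routinely by pairing leftover cycle-type differences in $H^{\ast}$ with leftover pendant-type differences in $\Z_{5p}\setminus H$, and the desired $p$-sun system of $K_{5p+1}$ follows at once from Corollary \ref{mixed diff method for Kmn}.
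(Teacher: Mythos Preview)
Your reduction is exactly the paper's: work over the group of order $5p$ (the paper uses $\Z_p\times\Z_5$, isomorphic to your $\Z_{5p}$), build one base $p$-sun $S$ through $\infty$ with full orbit, and complete with $(p+1)/4$ short base suns stabilised by the order-$p$ subgroup. Your difference-counting is also correct and matches the paper's complement set $D$ exactly.

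The gap is precisely the one you name yourself: you do not construct $S$. Your plan to mimic Proposition~\ref{v=3p+1} via primitive roots, a residue-class case split, and ad hoc handling of $p=7,11$ is plausible but unexecuted, and it is far more elaborate than necessary. The paper gives $S$ by a single explicit formula, uniform for every prime $p\equiv 3\pmod 4$, with no primitive roots, no case analysis, and no exceptional small primes: writing $p=4n+3$, the cycle is $c_0=(0,0)$ and $c_i=(-1)^{i+1}(i,1)$ for $1\le i\le p-1$, while the pendants $\ell_i$ are given by equally explicit closed-form expressions (one of them is $\infty$, the rest lie in prescribed cosets of $\Z_p\times\{0\}$). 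A direct check then shows that $\Delta S$ has no repetitions and that its complement in $(\Z_p\times\Z_5)\setminus\{(0,0)\}$ is
\[
\{\pm(2x,0):x\in[n+1,2n+1]\}\ \cup\ \{\pm(2y,4):y\in[1,n]\}\ \cup\ \{\pm(0,1)\},
\]
which is visibly partitionable into the $n+1=(p+1)/4$ quadruples needed for the short suns. So the ``main obstacle'' dissolves once you abandon the primitive-root template and write the zig-zag cycle down directly.
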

\begin{proof}
Set $p=4n+3$, and let $Y=[1,n]$ and $X=[n+1,2n+1]$.
%
Consider the following $(\Z_p\times \Z_{5})\cup\{\infty\}$-labeling $B$ of $\Sigma$ defined as follows:
\begin{align*}
& B(c_0)=(0,0) ;\quad B(c_i)=(-1)^{i+1}(i,1)\quad {\rm for\ every} \ i\in[1,p-1];\\
& B(\ell_0)=\infty;\quad\, \quad B(\ell_y)=(-1)^y(y,-1)\quad  {\rm for\ every \ } y\in Y;\\
& B(\ell_{2n+1})=(-2n-1,3);\quad B(\ell_{2n+2})=(-2n-1,-3); \\
& B(\ell_i)=(-1)^i(i,3)\quad {\rm for\ every} \ i\in[1,p-1]\setminus(Y\cup \{2n+1,2n+2\}).
\end{align*}
One can directly check that the vertices of $S=B(\Sigma)$ are pairwise distinct. Also,
it is not hard to verify that $\Delta S$ does not have repetitions
and that its complement in $(\Z_p\times \Z_{5})\setminus\{(0,0)\}$ is the set
$$D=\{\pm(2x,0) \ | \ x\in X\} \ \cup \ \{\pm(2y,4) \ | \ y\in Y\} \ \cup \ \{\pm(0,1)\}.$$

Clearly, $D$ can be partitioned into $n+1$ quadruples of the form
$D_x=\{\pm(2x,0),$ $\pm(r_x,s_x)\}$ with $x\in X$  and $s_x\neq 0$.
Letting
\[S_x = Dev_{\Z_p\times \{0\}}\big((0,0)\sim (2x,0) \sim (r_x+2x,s_x)\big)
\]
for $x\in X$,
it is clear that $\Delta S_x = D_x$, hence $\Delta \{S_x\mid x\in X\} = D$.
Therefore, Corollary \ref{mixed diff method for Kmn} guarantees that
$\bigcup_{x\in X} Orb_{\{0\}\times \Z_5} (S_x)\ \cup\ Orb_{\Z_p\times \Z_{5}}(S)$
is a $p$-sun system of $K_{5p+1}$.
\end{proof}

\begin{ex}
 Here, we construct a $7$-sun system of $K_{36}$ following the proof of Proposition \ref{case2d}.
 In this case, $Y=\{1\}$ and $X=\{2,3\}$. Now consider the $7$-sun $S$ defined below, whose vertices lie in $(\Z_7\times \Z_{5})\cup\{\infty\}$:
{\begin{align*}
S= \left(
  \begin{matrix}
  (0,0) & (1,1) &  -(2,1) &  (3,1) &  -(4,1) &  (5,1) & -(6,1) \\
  \infty & -(1,-1) & (2,3) & (-3,3) & -(3,3) & -(5,3) & (6,3)
  \end{matrix}
  \right).
\end{align*}}
We have
\begin{eqnarray*}
  \Delta S & = & \pm\big\{(1,1),(3,2),(5,2),(0,2),(2,2),(4,2),(6,1),(2,0),(4,4),(6,-2),\\
  & & (1,-2),(3,4),(5,4)\big\}.
\end{eqnarray*}
Hence $\Delta S$ does not have repetitions
and its complement in $(\Z_7\times \Z_{5})\setminus\{(0,0)\}$ is the set
$$D=\pm\{(4,0),(6,0),(2,4),(0,1)\}.$$
Now it is sufficient to take 
\[
S_2=Dev_{\Z_7\times\{0\}}((0,0) \sim (4,0) \sim (6,4)) \quad
S_3=Dev_{\Z_7\times\{0\}}((0,0) \sim (6,0) \sim (6,1)).
\]
One can check that
$\bigcup _{x\in X} Orb_{\{0\}\times \Z_5} (S_x)\ \cup\ Orb_{\Z_7\times \Z_{5}}S$
is a $7$-sun system of $K_{36}$.
\end{ex}

We finally construct $p$-sun systems of $K_{mp}$ whenever $p\equiv m \pmod{4}$.
\begin{prop}\label{v=3p, 5p}
Let $m$ and $p$ be odd prime numbers
with $m\leq p$ and $m\equiv p\pmod{4}$. Then there exists a $p$-sun system of $K_{mp}$.
\end{prop}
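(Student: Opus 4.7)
The plan is to split the argument into the cases $m = p$ and $m < p$.

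For $m = p$, since $p$ is odd we have $p^2 \equiv 1 \pmod{4}$, so the $\mathbb{F}_q$-construction from the proof of Theorem \ref{main1} applies with $q = p^2$: one fixes a primitive root $r$ of $\mathbb{F}_{p^2}$, sets $S = Dev_{\langle r\rangle}(0 \sim r \sim r+1)$ with $\langle r\rangle = \{jr \mid 1 \leq j \leq p\}$, and takes $\bigcup_{i=0}^{(p^2-5)/4} Orb_{\mathbb{F}_{p^2}}(r^{2i}S)$. This already yields a $p$-sun system of $K_{p^2} = K_{mp}$.

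For $m < p$, I would set $V(K_{mp}) = \Z_p \times \Z_m$ and invoke Corollary \ref{mixed diff method for Kmn} with $G = \Z_p$ and $w = 0$. The task reduces to producing a family $\cF$ of $p$-suns satisfying $\Delta_{ii}\cF = \Z_p \setminus \{0\}$ for every $0 \leq i \leq m-1$ and $\Delta_{ij}\cF = \Z_p$ for every $0 \leq i < j \leq m-1$. I would build $\cF$ as the union of $A$ \emph{generic} base suns (having trivial $\Z_p$-stabilizer, so that each generates a $\Z_p$-orbit of $p$ suns) and $B$ $\Z_p$-\emph{invariant} base suns of various types $(i,j)$ obtained from Lemma \ref{lemmaij} (each fixed by $\Z_p$, hence a singleton orbit). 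Counting total suns and the contributions to the lists $\Delta_{ij}\cF$ gives $pA + B = m(mp-1)/4$ and $2pA + \sum_{\ell=1}^{A} e_p^{(\ell)} + 3B = m(p-1) + \binom{m}{2}p$, where $e_p^{(\ell)}$ is the number of pure edges of the $\ell$-th generic sun. These relations are simultaneously solvable in non-negative integers for every admissible $(m,p)$; for instance $A = 1$, $\sum_\ell e_p^{(\ell)} = (p-3)/4$ when $m = 3$ and $p \equiv 3 \pmod{4}$, and $A = 5$, $\sum_\ell e_p^{(\ell)} = 5(p-1)/4$ when $m = 5$ and $p \equiv 1 \pmod{4}$.

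Having fixed these parameters, I would construct the generic base sun(s) explicitly by labeling the cycle and pendants of the abstract sun $\Sigma$ with elements of $\Z_p \times \Z_m$, in the spirit of Propositions \ref{v=3p+1} and \ref{case2d} (often using a primitive root of $\Z_p$), placing the prescribed pure edges in the required parts and verifying that the resulting partial mixed differences are without repetition. The residual missing differences in each $\Delta_{ij}\cF$ would then be covered by invariant suns drawn from Lemma \ref{lemmaij}, one per missing $\pm$-pair.

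The main obstacle is this explicit labeling: the rigid cycle-plus-pendant structure of a $p$-sun restricts which edge distributions are realizable, so the labels must be chosen so that all partial mixed differences are without repetition and so that the residual differences in each $\Delta_{ij}$ can be supplied by invariants of type $(i,j)$ or $(j,i)$ with cycle steps not clashing with those already used. As in Propositions \ref{v=3p+1} and \ref{case2d}, I expect this step to require a case distinction based on the residue class of $p$ modulo a small integer.
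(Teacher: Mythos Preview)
Your proposal is a plan, not a proof. For $m<p$ you correctly set up the framework---Corollary \ref{mixed diff method for Kmn} with $G=\Z_p$, a mix of generic base suns and $\Z_p$-invariant suns from Lemma \ref{lemmaij}, and the counting constraints $pA+B=m(mp-1)/4$---but you then stop exactly where the content begins. Saying you ``would construct the generic base sun(s) \ldots in the spirit of Propositions \ref{v=3p+1} and \ref{case2d}'' and that you ``expect this step to require a case distinction'' is not a construction; it is the acknowledgment that you have not produced one. Solvability of the arithmetic constraints is necessary but far from sufficient: the entire difficulty is in realizing those parameters by an explicit labeling of $\Sigma$ whose partial mixed differences have no repetitions and whose residual differences can be absorbed by Lemma-\ref{lemmaij} suns without collision among the cycle steps.

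The paper dispatches this difficulty with a single explicit and uniform recipe, with no primitive roots, no residue cases, and no separate treatment of $m=p$. For each $(r,s)\in\Z_p^*\times\Z_m$ one labels $\Sigma$ by starting the cycle at $(0,0)$, taking $m+1$ consecutive steps of $(r,s)$ and then alternating steps $(r,-s)$, $(r,s)$ until the cycle closes; the pendants are assigned by the complementary rule. One checks directly that the $\Z_m$-translates $\{S^h_{r,s}:h\in\Z_m\}$ contribute exactly $\{\pm r\}$ to $\Delta_{ij}$ when $i-j=\pm s$ and nothing otherwise. Assembling these over $r\in[1,(p+m-2)/4]$ for $s=1$ and $r\in[1,(p-1)/2]$ for $s\in[2,(m-1)/2]$ leaves, for each $h\in\Z_m$, exactly $\frac{m-1}{2}+\frac{p-m}{2}$ missing pairs, which are supplied by that many suns of type $(h,\cdot)$. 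Your separate $\mathbb{F}_{p^2}$ argument for $m=p$ is correct but unnecessary, since this construction already covers it.
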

\begin{proof}
For each pair $(r,s)\in \Z^*_{p} \times \Z_{m}$, let $B_{r,s}:V(\Sigma)\rightarrow \Z_p\times \Z_{m}$
be the labeling of the vertices of $\Sigma$ defined as follows:
\begin{align*}
  & B_{r,s}(c_0) = (0,0), \\
  & B_{r,s}(c_i) = B_{r,s}(c_{i-1}) +
    \begin{cases}
      (r, s) & \text{if $i\in [1, m+1] \cup\{m+3,m+5, \ldots, p-1\}$},\\
      (r, -s)& \text{if $i\in \{m+2, m+4, \ldots, p-2\}$},
    \end{cases} \\
  & B_{r,s}(\ell_{i}) = B_{r,s}(c_{i}) +
    \begin{cases}
      (r, -s) & \text{if $i\in [0, m] \cup\{m+2,m+4, \ldots, p-2\}$},\\
      (r, s)  & \text{if $i\in \{m+1,m+3, \ldots, p-1\}$}.
    \end{cases}
\end{align*}
Since $B_{r,s}$ is injective,
for every $h\in \Z_{m}$ the graph $S_{r,s}^h = \tau_{(0,h)}\big(B_{r,s}(\Sigma)\big)$ is a $p$-sun.
For $i,j\in \Z_{m}$, we also notice that
$\Delta_{ij} \{S_{r,s}^h \mid h\in \Z_{m}\} =\{\pm~r\}$ whenever
$i-j=\pm s$, otherwise it is empty.

Letting $\mathcal{S}$ be the union of the following two sets of $p$-suns:
\begin{align*}
  & \{S^h_{r,1} \mid h\in\Z_{m}, r\in \left[1, (p+m-2)/4\right]\},\\
  & \{S^h_{r,s} \mid h\in\Z_{m}, r\in \left[1, (p-1)/2\right], s\in[2, (m-1)/2]\},
\end{align*}
it is not difficult to see that for every $i,j\in\Z_{m}$
\[
 \Delta_{ij} \mathcal{S}=
 \begin{cases}
   \varnothing                       & \text{if $i=j$},\\
   \pm\left[ 1, \frac{p+m-2}{4}\right] & \text{if $i-j = \pm1 $}, \\
   \Z^*_{p}                          & \text{otherwise}.
 \end{cases}
\]

It is left to construct a set $\cT$ of $p$-suns
such that $\Delta_{ij}\cT=\Z_p\setminus \Delta_{ij}\mathcal{S}$ whenever $i\neq j$,
and $\Delta_{ii}\cT=\Z^*_p \setminus \Delta_{ii}\mathcal{S} = \Z^*_p$. Therefore,
\[
 |\Delta_{ij} \mathcal{T}| =
 \begin{cases}
   p-1                      & \text{if $i=j$},\\
   \frac{p-m}{2}+1          & \text{if $i-j = \pm1$}, \\
   1                        & \text{otherwise}.
 \end{cases}
\]
It is enough to take $\cT$ as a set consisting of
one sun of type $(h, h+x)$
and $\frac{p-m}{2}$ suns of type $(h,h+1)$,
for every $h\in\Z_{m}$ and $x\in[1, \frac{m-1}{2}]$.
These $p$-suns exist by Lemma \ref{lemmaij}, therefore
$\mathcal{S} \cup \cT$ is the desired $p$-sun system of  $K_{mp}$.
\end{proof}

\begin{ex}
  Let $(m,p)=(3,11)$. Following the proof of Proposition \ref{v=3p, 5p}, we construct
  an $11$-sun system of $K_{33}$.  For every $h\in\Z_{3}$ and $r\in [1,3]$, let $S^h_{r,1}$ be the $11$-sun
  defined below:
\begin{align*}
S^h_{r,1}&=
 \left(
  \begin{matrix}
  (0,h)  &  (r,h+1) & (2r,h+2) & (3r, h)     & (4r,h+1) & (5r,h)  \\
  (r,h+2)  & (2r,h) & (3r,h+1) & (4r, h+2)     & (5r,h+2) & (6r,h+2) \\
 \end{matrix}\;\;
  \right.\\
  &\hspace{1cm}
  \left.
  \begin{matrix}
  (6r,h+1) & (7r, h)   & (8r,h+1) &  (9r,h)    & (10r,h+1) \\
  (7r,h+2) & (8r, h+2) & (9r,h+2) & (10r,h+2)  & (0,h+2)
    \end{matrix}
  \right).
\end{align*}
  One can check that $\Delta_{ij}\{S^0_{r,1}, S^1_{r,1}, S^2_{r,1}\} = \{\pm r\}$ if $i\neq j$,
  otherwise it is empty. Therefore,
  letting $\mathcal{S} = \{S^h_{r,1}\mid h\in \Z_{3}, r\in[1,3]\}$,
  we have that $\Delta_{ij} \mathcal{S}$ is non-empty only when $i\neq j$, in which case
  we have  $\Delta_{ij} \mathcal{S}= \pm[1,3]$.

  Now let $\cT=\{T_{hg} \mid h\in\Z_{3}, g\in[1,5]\}$ where $T_{hg}$ is the $11$-sun defined as follows:
\begin{align*}
  T_{h1} &= Dev_{\Z_{11}\times\{0\}}((0,h)\sim (1,h)\sim (1,h+1)),\\
  T_{hg} &= Dev_{\Z_{11}\times\{0\}}((0,h)\sim (g,h)\sim (9,h+1)), \;\;\text{for every $g\in [2,5]$}.
\end{align*}
Note that each $T_{hg}$ is an $11$-sun of type $(h, h+1)$. Therefore we have that
\[
\Delta_{ij} \mathcal{T} =
\begin{cases}
  \pm [1,5]          & \text{if $0\leq i=j \leq 2$},\\
  \{0\}\ \cup\ [4,7] & \text{otherwise}.\\
\end{cases}
\]
By Corollary \ref{mixed diff method for Kmn}, it follows that $\cS\cup \cT$
is an $11$-sun system of $K_{33}$.
\end{ex}

We are now ready to show that the necessary conditions for the existence of  a $p$-sun system of
$K_v$ are also sufficient whenever $p$ is an odd prime. In other words,
we end this section by proving Theorem \ref{pprimevpiccolo}.\\

\noindent
\emph{Proof of Theorem \ref{pprimevpiccolo}}.
  If $p=3,5$ the result can be found in \cite{FJLS} and in \cite{FHL}, respectively.
  For $p\geq 7$, the result follows from Propositions \ref{v=3p+1}, \ref{case2d} and \ref{v=3p, 5p}.

\section*{Acknowledgements}

The authors gratefully acknowledge support from GNSAGA of Istituto Nazionale di Alta Matematica.

\end{document}